\definecolor{grey}{rgb}{0.7,0.7,0.7}
\newcounter{notes}%
\newcommand{\ignore}[1]{}
\theoremstyle{plain}
\newtheorem{theorem}{Theorem}
\newtheorem{proposition}[theorem]{Proposition}
\newtheorem{corollary}[theorem]{Corollary}
\newtheorem{observation}[theorem]{Observation}
\newtheorem{claim}[theorem]{Claim}
\newtheoremstyle{theoremwithref}{}{}{\itshape}{}{\bfseries}{.}{.5em}{#1 #2 #3}
\theoremstyle{theoremwithref}
\theoremstyle{definition}
\newtheorem{remark}[theorem]{Remark}
\newtheorem{remarks}[theorem]{Remarks}
\newtheorem{question}[theorem]{Question}
\numberwithin{theorem}{section}
\numberwithin{equation}{section}
\newcommand{\D}{\mathrm{d}}
\newcommand{\CP}{\mathcal{C}}
\newcommand{\ACP}{\mathsf{C}}
\newcommand{\ACHS}{\mathsf{H}}
\newcommand{\DD}{\mathsf{D}}
\newcommand{\SQ}{\mathsf{SQ}}
\newcommand{\NN}{\mathbb{N}}
\newcommand{\RR}{\mathbb{R}}
\newcommand{\HH}{\mathbb{H}}
\newcommand{\PP}{\mathbb{P}}
\newcommand{\OO}{\mathrm{O}}
\newcommand{\PO}{\mathrm{PO}}
\newcommand{\PSL}{\mathrm{PSL}}
\newcommand{\PGL}{\mathrm{PGL}}
\newcommand{\g}{\mathfrak{g}}
\newcommand{\AdS}{\mathrm{AdS}^3}
\newcommand{\AdSS}{\text{AdS}}
\newcommand{\A}{\mathscr{A}}
\newcommand{\Ad}{\operatorname{Ad}}
\newcommand{\Hom}{\mathrm{Hom}}
\newcommand{\ie}{i.e.\ }
\newcommand{\eg}{e.g.\ }
\newcommand{\resp}{resp.\ }
\title[Fundamental domains in anti-de Sitter $3$-space]{Fundamental domains for free groups acting on anti-de Sitter $3$-space}
\author{Jeffrey Danciger}
\address{Department of Mathematics, The University of Texas at Austin, 1 University Station C1200, Austin, TX 78712, USA}
\email{jdanciger@math.utexas.edu}
\author{Fran\c{c}ois Gu\'eritaud}
\address{CNRS and Universit\'e Lille 1, Laboratoire Paul Painlev\'e, 59655 Villeneuve d'Ascq Cedex, France 
\newline Wolfgang-Pauli Institute, University of Vienna, CNRS-UMI 2842, Austria}
\email{francois.gueritaud@math.univ-lille1.fr}
\author{Fanny Kassel}
\address{CNRS and Universit\'e Lille 1, Laboratoire Paul Painlev\'e, 59655 Villeneuve d'Ascq Cedex, France}
\email{fanny.kassel@math.univ-lille1.fr}
\thanks{J.D. was partially supported by the National Science Foundation under the grant DMS 1103939.
F.G. and F.K. were partially supported by the Agence Nationale de la Recherche under the grants DiscGroup (ANR-11-BS01-013) and ETTT (ANR-09-BLAN-0116-01), and through the Labex CEMPI (ANR-11-LABX-0007-01). 
All three authors acknowledge support from the GEAR Network, funded by the National Science Foundation under the grants DMS 1107452, 1107263, and 1107367 (``RNMS: GEometric structures And Representation varieties").}
\begin{document}

\begin{abstract}
Crooked planes are piecewise linear surfaces that were introduced by Drumm in the early 1990s to construct fundamental domains for properly discontinuous actions of free groups on Minkowski $3$-space. 
In a previous paper, we introduced analogues of these surfaces, called $\AdSS$ crooked planes, in the $3$-dimensio\-nal anti-de Sitter space $\AdS$; we showed that many properly discontinuous actions of free groups on $\AdS$ admit fundamental domains bounded by $\AdSS$ crooked planes.
Here we study further the question of which proper actions on $\AdS$ admit crooked fundamental domains, and show that some do not, in contrast to the Minkowski setting.
\end{abstract}

\maketitle
\tableofcontents

\section{Introduction}

This paper concerns the construction of fundamental domains for properly discontinuous group actions, by isometries, on a Lorentzian space, specifically the $3$-dimensional anti-de Sitter space $\AdS$.
In the setting of group actions on Riemannian spaces such as the Euclidean or the real hyperbolic space, there are standard procedures for constructing fundamental domains, \eg Dirichlet polyhedra.
However, such constructions make crucial use of the Riemannian metric, and typically cannot be made to work in the Lorentzian setting (nor in other nonmetric settings).
Hence the construction of fundamental domains in Lorentzian geometry is often more of an ad hoc endeavor.

In the setting of $3$-dimensional flat Lorentzian geometry, or Minkowski geometry, Drumm \cite{dru92} introduced piecewise linear surfaces called \emph{crooked planes} and used them to build polyhedral fundamental domains for \emph{Margulis spacetimes}.
These are the quotients of the Minkowski $3$-space $\RR^{2,1}$ (namely $\RR^3$ endowed with a translation-invariant metric of signature $(2,1)$) by free groups acting properly discontinuously by affine isometries.
The first such spacetimes were discovered by Margulis \cite{mar83, mar84} in 1983, as counterexamples to a possible extension \cite{mil77} of the Auslander Conjecture \cite{aus64}.
Since then, Margulis spacetimes and their fundamental domains have been the object of a rich literature, most prominently by Charette, Drumm, Goldman, Labourie, and Margulis: see \cite{dru92, dg95, dg99, cg00, fra03, glm09, cdg10,cdg11,cdg13, cg13, dgk13}.
Recently, we showed in \cite{dgk-strips, dgk-parab} that every Margulis spacetime $M$ comes (in an essentially unique way) from a so-called \emph{infinitesimal strip deformation} of a noncompact hyperbolic surfa\-ce, and that consequently $M$ admits a fundamental domain bounded by pairwise disjoint crooked planes.
This last result had been conjectured by Drumm and Goldman \cite{dg95}, and was known as the \emph{Crooked Plane Conjecture}.

The $3$-dimensional \emph{anti-de Sitter space} $\AdS$ is a model space for Lorentzian geometry of constant negative curvature. There is a close connection between the theory of complete $\AdSS$ spacetimes, specifically quotients of $\AdS$, and the theory of Margulis spacetimes.
In particular, we showed in \cite{dgk13, dgk-parab} that any Margulis spacetime is, in a certain sense, a rescaled limit of collapsing $\AdSS$ spacetimes.
It follows from more recent work~\cite{dgk-strips} (see Appendix~\ref{app:convergence}) that such families of collapsing $\AdSS$ spacetimes may be constructed explicitly by deforming a fundamental domain in Minkowski space, bounded by Drumm's crooked planes, into fundamental domains in $\AdS$.
Specifically, the collapsing spacetimes admit collapsing fundamental domains in $\AdS$ bounded by piecewise geodesic surfaces, which we call \emph{AdS crooked planes}, and the rescaled limit of these fundamental domains is the fundamental domain for the limiting Margulis spacetime. 
The construction actually implies that fundamental domains bounded by AdS crooked planes exist for a large open set in the moduli space of properly discontinuous actions of a free group on $\AdS$. The purpose of the present paper is to prove the following:

\begin{theorem}\label{thm:no-crooked-fd}
There exist properly discontinuous actions of finitely generated free groups on $\AdS$ that do not admit any fundamental domain in $\AdS$ bounded by pairwise disjoint $\AdSS$ crooked planes.
\end{theorem}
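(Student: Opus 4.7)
The plan is to derive a combinatorial obstruction on the boundary at infinity $\partial\AdS$, and then construct proper actions of free groups that violate it. Recall from the previous paper that an $\AdSS$ crooked plane is determined by a vertex $p \in \AdS$ together with a spacelike geodesic through $p$, and that its closure in the conformal compactification meets $\partial\AdS$ along a union of two circles, one in each of the two rulings of $\partial\AdS$. Using the identification $\AdS \simeq \mathrm{PSL}(2,\RR)$ acted upon by $\mathrm{PSL}(2,\RR)\times\mathrm{PSL}(2,\RR)$ and the corresponding identification of the two rulings with two copies of $\partial\mathbb{H}^2$, a proper action of a free group $\Gamma$ corresponds to a pair $(j,\rho)$ with $j$ convex cocompact and $\rho$ strictly dominated by $j$ (Kassel).

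First, I would extract from the previous paper a sharp disjointness criterion for two $\AdSS$ crooked planes in terms of their two pairs of boundary circles. For a hypothetical fundamental domain of a rank-$r$ free group bounded by $2r$ pairwise disjoint crooked planes, this criterion recasts itself as a ``Schottky-like'' condition holding simultaneously on both rulings. Since the crooked planes must be $\Gamma$-equivariant, the $j$- and $\rho$-factors each permute their respective collection of boundary circles, so the induced cyclic orderings of fixed points of $j(\gamma_i)$ and of $\rho(\gamma_i)$ on $\partial\mathbb{H}^2$ must both be compatible with one and the same combinatorial Schottky configuration.

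Second, I would construct an explicit $(j,\rho)$ for which such a simultaneous configuration cannot exist. The key observation is that the strict-domination criterion for properness constrains only translation lengths and imposes no constraint on cyclic orderings of attracting fixed points. Concretely, I would take $\Gamma = F_r$ for $r \geq 3$, a convex cocompact $j$, and $\rho = j' \circ \phi$ where $\phi$ is an automorphism of $\Gamma$ permuting generators nontrivially and $j'$ is another convex cocompact representation whose translation lengths are uniformly small enough that $\rho$ is strictly dominated by $j$ (verified via an explicit equivariant contracting map $\mathbb{H}^2 \to \mathbb{H}^2$). For sufficiently generic data, the two induced cyclic orderings on $\partial\mathbb{H}^2$ disagree, precluding any simultaneous Schottky configuration and thus any crooked fundamental domain.

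The main obstacle will be the rigorous passage from the boundary combinatorics back to actual crooked planes in $\AdS$: crooked planes are not rigid, their vertex and stem vary in a finite-dimensional family, and a priori one must show that no element of this family realizes both the $j$- and the $\rho$-combinatorics. I expect to handle this by proving that the cyclic order of boundary circles on each ruling is a continuous invariant of the vertex-and-stem data which is forced, by equivariance, to match the cyclic order of $j(\gamma_i)$- (resp.\ $\rho(\gamma_i)$-) fixed points, so that the combinatorial obstruction survives every such choice. A compactness or connectedness argument on the $\Gamma$-equivariant configuration space should then close the proof.
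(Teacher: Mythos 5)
Your meta-strategy---extract a necessary condition from a disjointness criterion for crooked planes, then exhibit proper actions violating it---is the right one, but the necessary condition you propose is not valid, and this is the heart of the matter. A right $\AdSS$ crooked plane $g\ACP^*(\ell)$ meets $\partial_\infty\AdS\simeq\PP^1(\RR)\times\PP^1(\RR)$ in two full lines of a \emph{single} ruling (those determined by the endpoints of $\ell$), together with two tangency points lying on them; it does not trace out ``one circle in each ruling,'' so no interval/cyclic-order data in the second ruling is recorded at infinity. Correspondingly, the sharp disjointness criterion is not combinatorial but metric: $g\ACP^*(\ell)\cap g'\ACP^*(\ell')=\emptyset$ iff $d(g\cdot x,g'\cdot x')-d(x,x')\leq -K<0$ uniformly on $\ell\times\ell'$. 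This requires the $j$-side geodesics $\ell,\ell'$ to be disjoint, but explicitly does \emph{not} require $g\cdot\ell$ and $g'\cdot\ell'$ to be disjoint: two right crooked planes can be disjoint while their $\rho$-side geodesics cross. Hence an equivariant family of disjoint crooked planes forces a Schottky-type configuration only for $j$; there is no ``simultaneous Schottky condition on both rulings,'' and no forced matching of cyclic orderings of fixed points of $j(\gamma_i)$ and $\rho(\gamma_i)$. Indeed, the actual obstructions exploit pairs where $\rho$ has \emph{no} Schottky structure at all ($\rho$ trivial, or with elliptic image), which your framework cannot see.

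Concretely, what a crooked fundamental domain does force is: a $j(\Gamma)$-invariant family $\A$ of disjoint geodesics cutting $\HH^2$ into fundamental domains for $j$ (so $j$ must be convex cocompact), together with a $(j,\rho)$-equivariant map $f$ that is an \emph{isometry} on each line of $\A$ and uniformly contracting between distinct lines. The counterexamples then come from quantitative violations of this: e.g.\ if $\rho$ is trivial and the convex core of $j(\Gamma)\backslash\HH^2$ has a single, very short boundary component of length $D$, the arcs of $\A$ would have to cross the convex core in segments of length $\leq D/2$, contradicting the Collar Lemma; or, if $\rho(\gamma)$ is a rotation of definite angle, the isometric image of a long segment of a line of $\A$ would have to fit in a bounded ball, forcing $\lambda(j(\gamma))$ to be bounded below. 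Your proposed examples $\rho=j'\circ\phi$ with both $j$ and $\rho$ convex cocompact are not shown to violate any such necessary condition---and when the marked quotient surfaces happen to be homeomorphic, the strip-deformation theorem guarantees a crooked fundamental domain \emph{does} exist, so ``generic disagreement of cyclic orders'' cannot by itself be an obstruction. To repair the argument you would need to replace the combinatorial invariant with a metric one extracted from the isometry-on-lines/contraction-between-lines dichotomy.
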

\noindent Thus, while the analogy between $3$-dimensional Minkowski and anti-de Sitter geometry is very strong, the analogue of the Crooked Plane Conjecture in $\AdS$ is false.
The question of which proper actions on $\AdS$ admit crooked fundamental domains (Question~\ref{q:fund-domain}) remains open.

Theorem~\ref{thm:no-crooked-fd} is proved in Section~\ref{sec:no-crooked-fd}.
Prior to this, we establish several useful results about $\AdSS$ crooked planes:
\begin{itemize}
  \item Theorems \ref{thm:AdS-disj-left} and~\ref{thm:AdS-disj-right} give complete disjointness criteria for $\AdSS$ crooked planes (see also Theorems \ref{thm:AdS-disj-SQ-left} and~\ref{thm:AdS-disj-SQ-right} in Appendix~\ref{app:SQ}).
  They should be compared with the disjointness criterion of Drumm--Goldman \cite{dg99, bcdg13} for crooked planes in~$\RR^{2,1}$.
  \item Theorem~\ref{thm:charact-crooked-domains} gives a necessary and sufficient condition, in terms of $2$-dimensional hyperbolic geometry, for the existence of a fundamental domain bounded by pairwise disjoint $\AdSS$ crooked planes.
  \item Corollary~\ref{cor:AdS-Drumm-shortening} is the analogue, in $\AdS$, of Drumm's result in~$\RR^{2,1}$ \cite{dru92, cg00} that gluing together a polyhedron $\mathsf{R}$ bounded by disjoint crooked planes yields a proper action (which one could call a \emph{Lorentzian Schottky group}) for which $\mathsf{R}$ is a fundamental domain.
\end{itemize}

\section{Reminders}

We first recall a few basic facts on $3$-dimensional anti-de Sitter geometry, as well as the definition of $\AdSS$ crooked planes introduced in \cite{dgk-strips}.

\subsection{Anti-de Sitter $3$-space}\label{subsec:AdS3}

The $3$-dimensional anti-de Sitter space
$$\AdS = \PO(2,2)/\OO(2,1)$$
is a model space for Lorentzian $3$-manifolds of constant negative curvature.
It can be realized as the (open) set of negative points in $\PP^3(\RR)$ with respect to a quadratic form of signature $(2,2)$; its isometry group is $\PO(2,2)$.
The intersection of $\AdS$ with an affine chart of $\PP^3(\RR)$ is the region bounded by a one-sheeted hyperboloid.
Let $G=\PGL_2(\RR)$ be the group of isometries of the hyperbolic plane $\HH^2$, with identity component $G_0=\PSL_2(\RR)$ consisting of orientation-preserving isometries.
Then $\AdS$ can be realized as the group $G_0$ equipped with the biinvariant Lorentzian structure induced by half the Killing form of $\g=\mathfrak{pgl}_2(\RR)=\mathfrak{psl}_2(\RR)$; the group of orientation-preserving isometries of $\AdS$ then identifies with
$$(G\times G)_+ := \{ (g_1,g_2)\in G\times G~|~g_1g_2\in G_0\} ,$$
acting on~$G_0$ by right and left multiplication: $(g_1,g_2)\cdot g=g_2gg_1^{-1}$.
The boundary $\partial_{\infty}\AdS$ of $\AdS$ in $\PP^3(\RR)$ identifies with the projectivization of the set of $(2\times 2)$-matrices of rank~$1$, which itself identifies with $\PP^1(\RR)\times\PP^1(\RR)$ by taking the projectivized kernel and the projectivized image.
The action of $(G\times G)_+$ on $\AdS$ induces the natural action of $(G\times G)_+$ on
$$\PP^1(\RR)\times\PP^1(\RR)\simeq\partial_{\infty}\HH^2\times\partial_{\infty}\HH^2,$$
where $\partial_{\infty}\HH^2$ is the boundary at infinity of~$\HH^2$.

\subsection{Quotients of $\AdS$}\label{subsec:prop-AdS}

By \cite{kr85}, any torsion-free discrete subgroup of $(G\times G)_+$ acting properly~dis\-continuously on $\AdS$ is necessarily, up to switching the two factors of $G\times G$, of the form
\begin{equation}\Gamma^{j,\rho} = \{ (j(\gamma), \rho(\gamma))~|~\gamma\in\Gamma\} \subset G\times G \label{eqn:def-jr}\end{equation}
where $\Gamma$ is a discrete group and $j,\rho\in\Hom(\Gamma,G)$ are two representations with $j$ injective and discrete.
Suppose that $\Gamma$ is finitely generated.
By \cite{kasPhD,gk13}, a necessary and sufficient condition for the action of $\Gamma^{j,\rho}$ on $\AdS$ to be properly discontinuous is that (up to switching the two factors) $j$ be injective and discrete and $\rho$ ``uniformly shorter'' than~$j$, in the sense that there exists a $(j,\rho)$-equivariant Lipschitz map $\HH^2\rightarrow\HH^2$ with Lipschitz constant $<1$.
When $j$ is convex cocompact or when the group $\rho(\Gamma)$ does not have a unique fixed point in $\partial_{\infty}\HH^2$, this property is also equivalent \cite{kasPhD}, \cite[Th.\,1.8]{gk13} to
\begin{equation}\label{eqn:propcritAdS}
\sup_{\gamma\in\Gamma,\ \lambda(j(\gamma))>0 }\ \frac{\lambda(\rho(\gamma))}{\lambda(j(\gamma))} < 1,
\end{equation}
where $\lambda : G\rightarrow\RR_+$ is the translation length function in~$\HH^2$:
\begin{equation}\label{eqn:def-lambda}
\lambda(g) := \inf_{p\in\HH^2} \, d(p,g\cdot p).
\end{equation}
(We denote by $d$ the hyperbolic metric on~$\HH^2$.)
Note that $\lambda(g)$ is the translation length of~$g$ if $g\in G$ is hyperbolic, and $0$ otherwise.

If $\Gamma$ is the fundamental group of a compact surface, and both $j$ and~$\rho$ are injective and discrete, then \eqref{eqn:propcritAdS} is never satisfied \cite{thu86}.
We are interested here in the case that $\Gamma$ is a (finitely generated) free group and we shall assume this throughout the remainder of the paper.
In this case, there are many pairs $(j,\rho)$ satisfying \eqref{eqn:propcritAdS} with both $j$ and $\rho$ injective and discrete. 
In \cite{dgk-strips}, we proved that if $(j, \rho)$ satisfies \eqref{eqn:propcritAdS} and $j$ and~$\rho$ are both convex cocompact, with $j(\Gamma)\backslash \HH^2$ and $\rho(\Gamma)\backslash \HH^2$ homeomorphic as marked surfaces, then $j$ is a so-called \emph{strip deformation} of~$\rho$ (in an essentially unique way): this means that $j(\Gamma)\backslash\HH^2$ is obtained from $\rho(\Gamma)\backslash\HH^2$ by adding in finitely many disjoint hyperbolic \emph{strips} (\ie regions isometric to a domain of $\HH^2$ bounded by two disjoint geodesic lines).
As an application, we showed \cite{dgk-strips} that in this case the group $\Gamma^{j,\rho}$ admits a fundamental domain bounded by pairwise disjoint \emph{$\AdSS$ crooked planes}.

\subsection{$\AdSS$ crooked planes}\label{subsec:ACP}

Let us recall the definition of $\AdSS$ crooked planes, introduced in \cite{dgk-strips}.
A crooked plane in $\AdS \simeq G_0$ is the union of three pieces:
\begin{itemize}
  \item a \emph{stem}, defined to be the union of all causal (\ie timelike or lightlike) geodesic lines of a given timelike plane of $\AdS$ that pass through a given point, called the \emph{center} of the $\AdSS$ crooked plane;
  \item two \emph{wings}, defined to be two disjoint open lightlike half-planes of $\AdS$ whose respective boundaries are the two (lightlike) boundary lines of the stem.
\end{itemize}

\begin{figure}[ht!]
\centering
\labellist
\small\hair 2pt
\pinlabel $\mathsf{S}(\ell)$ [r] at 93 25
\pinlabel $\mathsf{W}(v^+)$ [u] at 92 90
\pinlabel $\mathsf{W}(v^-)$ [d] at 53 64
\pinlabel $\mathsf{S}(\ell)$ [d] at 259 80
\pinlabel $\mathsf{W}(v^+)$ [d] at 293 122
\pinlabel $\mathsf{W}(v^-)$ [d] at 225 99
\endlabellist
\includegraphics[scale=0.8]{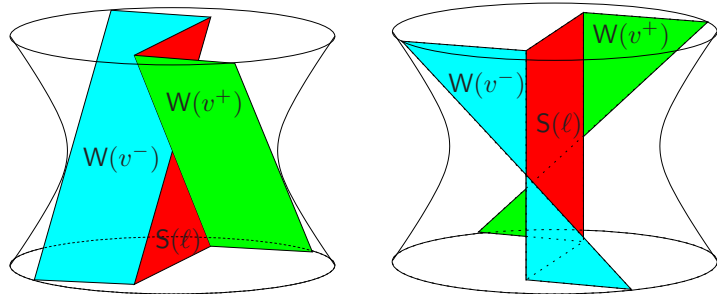}
\caption{The left $\AdSS$ crooked plane $\ACP(\ell)$, seen in two different affine charts of $\PP^3(\RR)\supset\AdS$. The stem $\mathsf{S}(\ell)$ (red) is a bigon whose closure in $\PP^3(\RR)$ meets the boundary of $\AdS$ in two points, tangentially. On the left, these two points are at infinity; on the right, the center of the stem is at infinity. Each wing $\mathsf{W}(v^+)$ or $\mathsf{W}(v^-)$ (green or blue) is itself a bigon, bounded by a line contained in the boundary of $\AdS$ and a lightlike line of the stem $\mathsf{S}(\ell)$.}
\label{fig:2CP}
\end{figure}

\noindent
An $\AdSS$ crooked plane centered at the identity element is determined by a geodesic line $\ell$ of~$\HH^2$ and a choice of orientation (left or right).
We denote by $\ACP(\ell)$ the \emph{left $\AdSS$ crooked plane centered at $e\in G_0$ associated with~$\ell$}, which is described explicitly as follows (see Figure~\ref{fig:2CP}):
\begin{itemize}
  \item the interior of the stem $\mathsf{S}(\ell)$ of $\ACP(\ell)$ is the set of elliptic elements $h\in G_0$ whose fixed point belongs to~$\ell$;
  \item the boundary of the stem $\mathsf{S}(\ell)$ consists of $\{e\}$ and of all parabolic elements $h\in G_0$ fixing one of the endpoints $[v^+],[v^-]$ of $\ell$ in $\partial_{\infty}\HH^2$;
  \item the wings of $\ACP(\ell)$ are $\mathsf{W}(v^+)$ and $\mathsf{W}(v^-)$, where $\mathsf{W}(v^+)$ is the set of hyperbolic elements $h\in G_0$ with attracting fixed point $[v^+]$,~and~simi\-larly for~$v^-$.
\end{itemize}
In other words, $\ACP(\ell)$ is the set of orientation-preserving isometries of~$\HH^2$ (\ie elements of $G_0\simeq\AdS$) with a nonrepelling fixed point in~$\overline{\ell}$, where $\overline{\ell}$ is the closure of $\ell$ in $\HH^2\cup\partial_{\infty}\HH^2$.
A general \emph{left $\AdSS$ crooked plane} is a set of the form $g_2\ACP(\ell)g_1^{-1}$ where $\ell$ is a geodesic line of~$\HH^2$ and $g_1,g_2\in G_0$.
The image of a left $\AdSS$ crooked plane under the orientation-reversing isometry $g\mapsto g^{-1}$ of $\AdS$ is called a \emph{right $\AdSS$ crooked plane}. 
The right $\AdSS$ crooked plane centered at $e\in G_0$ associated with~$\ell$ will be denoted $\ACP^*(\ell)$: it is the inverse of $\ACP(\ell)$, \ie the set of orientation-preserving isometries of $\HH^2$ with a nonattracting fixed point in~$\overline{\ell}$.
Note that $\ACP^*(\ell)$ is obtained from $\ACP(\ell)$ by replacing each wing (which is an open lightlike half-plane) with the interior of the complementary lightlike half-plane.

An $\AdSS$ crooked plane (unlike a timelike geodesic plane) divides $\AdS$ into two connected components.
Given a transverse orientation of~$\ell$, we denote by $\ACHS_+(\ell)$ (\resp $\ACHS_+^*(\ell)$) the connected component of $\AdS\smallsetminus\ACP(\ell)$ (\resp $\AdS \smallsetminus \ACP^*(\ell)$) consisting of nontrivial elements $g\in G_0$ with a nonrepelling (\resp nonattracting) fixed point in $(\HH^2\cup\partial_{\infty}\HH^2)\smallsetminus\overline{\ell}$ lying on the positive side of~$\overline{\ell}$.
We denote the closures in $\AdS$ by $\overline{\ACHS_+(\ell)}=\ACHS_+(\ell)\cup\ACP(\ell)$ and $\overline{\ACHS_+^*(\ell)}=\ACHS_+^*(\ell)\cup\ACP^*(\ell)$.

\begin{remark}\label{rem:conj-ACP}
For any $g\in G_0$ and any geodesic line $\ell$ of~$\HH^2$,
$$\ACP(g\cdot\ell) = g \, \ACP(\ell) \, g^{-1}.$$
If $g$ takes the transverse orientation of~$\ell$ to the transverse orientation of $g\cdot\ell$, then
$$\overline{\ACHS_+(g\cdot\ell)} = g \, \overline{\ACHS_+(\ell)} \, g^{-1}.$$
\end{remark}

\begin{remark}\label{rem:extremelines}
The set $\partial_{\infty}\ACHS_+(\ell)$ (\resp $\partial_{\infty}\ACHS_+^*(\ell)$) of accumulation points in $\partial_{\infty}\AdS$ of $\ACHS_+(\ell)$ (\resp $\ACHS_+^*(\ell)$) is the projectivization of the set of rank-one matrices whose image (\resp kernel) in~$\RR^2$ projects to a point of $\PP^1(\RR)\simeq\partial_{\infty}\HH^2$ lying on the positive side of~$\overline{\ell}$.
For any $(g_1,g_2)\in (G\times G)_+$,
\begin{eqnarray*}
\partial_{\infty}\big(g_2\ACHS_+(\ell)g_1^{-1}\big) = \partial_{\infty}\ACHS_+(g_2\cdot\ell),\\
\partial_{\infty}\big(g_2\ACHS_+^*(\ell)g_1^{-1}\big) = \partial_{\infty}\ACHS_+^*(g_1\cdot\ell).\,
\end{eqnarray*}
\end{remark}

We define the \emph{stem quadrant} $\SQ(\ell)\subset\AdS\simeq G_0$ of the transversely oriented geodesic~$\ell$ to be the set of hyperbolic elements whose axis is orthogonal to~$\ell$ and that translate towards the positive side of~$\ell$.
It is one of the two connected components of the complement of the stem $\mathsf{S}(\ell)$ in the timelike plane containing $\mathsf{S}(\ell)$.

\begin{remarks}\label{rem:Einstein}
\begin{enumerate}
  \item $\AdSS$ crooked planes were introduced in \cite{dgk-strips} as analogues of Drumm's crooked planes in~$\RR^{2,1}$; the latter had initially been defined in \cite{dru92, dg99}.
  More precisely, let us view the Minkow\-ski space $\RR^{2,1}$ as the Lie algebra $\g=\mathfrak{psl}_2(\RR)$ of~$G$ endowed with half its Killing form (as in \cite{gm00, dgk13, dgk-strips}), or equivalently as the set of \emph{Killing vector fields} on~$\HH^2$ (see \cite[\S\,4.1]{dgk-strips}).
  Following \cite{dgk-strips}, the \emph{left crooked plane centered at $0\in\g$ associated with~$\ell$}, denoted $\CP(\ell)$, is by definition the set of Killing fields on~$\HH^2$ with a nonrepelling fixed point in~$\overline{\ell}$.
  The left crooked planes in~$\RR^{2,1}$ are the sets of the form $\CP(\ell)+v$ where $\ell$ is a geodesic of~$\HH^2$ and $v\in\RR^{2,1}$.
  The left $\AdSS$ crooked plane $\ACP(\ell)\subset G_0$ defined above is just the exponential of $\CP(\ell)\subset\g$.
  For any transverse orientation of~$\ell$, the $\AdSS$ stem quadrant $\SQ(\ell)\subset G_0$ is the exponential of the Minkowski stem quadrant $\mathcal{SQ}(\ell)\subset\g$, which is by definition the set of hyperbolic Killing fields whose axis is orthogonal to~$\ell$ and that translate towards the positive side of~$\ell$ (this is also called the \emph{cone of allowable translations} in \cite{bcdg13}).
  \item By \cite{gol13}, the closures of both Minkowski crooked planes and $\AdSS$ crooked planes inside the $3$-dimensional \emph{Einstein universe} $\mathrm{Ein}^3$ are \emph{crooked surfaces}, which were introduced by Frances \cite{fra03}.
  We refer to \cite{cfl13} for a recent construction of examples of fundamental domains bounded by crooked surfaces in~$\mathrm{Ein}^3$.
\end{enumerate}
In this paper, Minkowski crooked planes will only be used in Appendix~\ref{app:convergence}.
\end{remarks}

\section{Disjointness criteria for $\AdSS$ crooked planes}\label{sec:disjoint}

We first establish complete disjointness criteria for $\AdSS$ crooked planes, both left (Theorem~\ref{thm:AdS-disj-left}) and right (Theorem~\ref{thm:AdS-disj-right}).
In Conditions~(3) below, distances between ideal points should be interpreted as distances between horoballs centered at these ideal points: see Observation~\ref{obs:ext-diff-d} and Remark~\ref{rem:concrete-Fgg'}.

\begin{theorem}\label{thm:AdS-disj-left}
Let $\ell,\ell'$ be geodesic lines of~$\HH^2$.
For $g,g'\in G_0$, the following conditions are equivalent:
\begin{enumerate}
  \item the left $\AdSS$ crooked planes $g\ACP(\ell)$ and $g'\ACP(\ell')$ are disjoint;
  \item there exists $K > 0$ such that $d(g\cdot x, g'\cdot x') - d(x,x') \geq K$ for all $x\in\ell$ and $x'\in\ell'$;
  \item for any endpoints $\xi$ of~$\ell$ and $\xi'$ of~$\ell'$, we have $g\cdot\xi\neq g'\cdot\xi'$ and $d(g\cdot\nolinebreak\xi, g'\cdot\nolinebreak\xi') - d(\xi, \xi') > 0$.
\end{enumerate}
If these conditions hold, then $g\cdot\ell$ and $g'\cdot\ell'$ have disjoint closures in $\HH^2\cup\nolinebreak\partial_{\infty}\HH^2$.

The closed left $\AdSS$ crooked \emph{half-spaces} $\overline{\ACHS_+(g\cdot\ell)}\,g = g\,\overline{\ACHS_+(\ell)}$ and\linebreak $\overline{\ACHS_+(g'\cdot\ell')}\,g' = g'\,\overline{\ACHS_+(\ell')}$ are disjoint if and only if (1), (2), (3) hold and $g\cdot\ell$ and $g'\cdot\ell'$ are transversely oriented away from each other.
\end{theorem}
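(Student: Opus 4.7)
The plan is to prove $(2) \Rightarrow (3) \Rightarrow (1) \Rightarrow (2)$ and then derive the half-space disjointness statement. The central tool is a sequential reformulation of the crooked plane, immediate from the fixed-point description in Section~\ref{subsec:ACP}: an element $h \in G_0$ lies in $\ACP(\ell)$ if and only if there is a sequence $(x_n) \subset \ell$ converging in $\overline\ell \subset \HH^2 \cup \partial_\infty \HH^2$ to a nonrepelling fixed point $p$ of $h$, with $h \cdot x_n \to p$ as well and $d(x_n, h \cdot x_n) \to \lambda(h)$. Equivalently, $k \in g\ACP(\ell)$ iff there is such $(x_n)$ with $d(g \cdot x_n, k \cdot x_n) \to \lambda(g^{-1}k)$. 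This is what bridges the intrinsic description of $\ACP(\ell)$ with the metric conditions~(2) and~(3).

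For $(2) \Rightarrow (3)$, I would pass to the limit along $x_n \to \xi$ in $\ell$ and $x'_n \to \xi'$ in $\ell'$, reading ideal-point distances as signed distances between horoballs with the normalization of the upcoming Observation~\ref{obs:ext-diff-d}; the bound $K > 0$ gives $d(g\xi, g'\xi') - d(\xi, \xi') > 0$ and in particular $g\xi \neq g'\xi'$ (coincidence would send the left side to $-\infty$ for suitable horoballs).

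For $(3) \Rightarrow (1)$, I would assume by contradiction that $k \in g\ACP(\ell) \cap g'\ACP(\ell')$, and pick sequences $(x_n), (x'_n)$ converging to nonrepelling fixed points $p \in \overline\ell, p' \in \overline{\ell'}$ of $g^{-1}k, g'^{-1}k$. Since $k \cdot p = g \cdot p$ and $k \cdot p' = g' \cdot p'$ and $k$ is an isometry, the triangle inequality
\[
d(g\cdot x_n, g'\cdot x'_n) - d(x_n, x'_n) \leq d(g\cdot x_n, k\cdot x_n) + d(k\cdot x'_n, g'\cdot x'_n)
\]
passes to the limit as $d(gp, g'p') - d(p, p') \leq \lambda(g^{-1}k) + \lambda(g'^{-1}k)$, where horoballs at $gp, g'p'$ are chosen as the $k$-images of horoballs at $p, p'$ (consistent with the parabolic/hyperbolic dynamics, so that the $\lambda$ contributions are absorbed into horoball-level shifts relative to the canonical normalization of Observation~\ref{obs:ext-diff-d}). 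A case analysis on the dynamical types of $g^{-1}k, g'^{-1}k$ and on whether $p, p'$ are interior or ideal then forces the left-hand side to be $\leq 0$, contradicting~(3). The key subcase is both-parabolic, where both translation lengths vanish and horoball invariance yields exactly $d(gp, g'p') = d(p, p')$, directly violating the strict inequality of~(3).

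For $(1) \Rightarrow (2)$, I would argue by contrapositive: if $\inf_{(x, x') \in \ell \times \ell'}(d(g \cdot x, g' \cdot x') - d(x, x'))$ is nonpositive, extract a minimizing sequence (possibly escaping to the ideal boundary) and construct $k \in G_0$ as a subsequential limit of isometries $k_n$ matching midpoints and tangent directions of the geodesic segments $[x_n, x'_n]$ and $[g \cdot x_n, g' \cdot x'_n]$. Equality in the limiting triangle inequality forces $g^{-1}k, g'^{-1}k$ to have nonrepelling fixed points in $\overline\ell, \overline{\ell'}$, so $k$ lies in the intersection, contradicting~(1). The disjointness of the closures of $g\cdot\ell$ and $g'\cdot\ell'$ in $\HH^2 \cup \partial_\infty \HH^2$ is then immediate from~(3). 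For the half-space assertion, once~(1) holds, the connected set $g'\ACP(\ell')$ lies in exactly one of the two open components $g\ACHS_\pm(\ell)$ of $\AdS \setminus g\ACP(\ell)$, and symmetrically; the four resulting sign combinations correspond to the closed half-spaces being disjoint, nested (two cases), or covering $\AdS$, and the ``transversely oriented away from each other'' condition selects the disjoint configuration. \textbf{The main obstacle} I anticipate is $(1) \Rightarrow (2)$: constructing $k$ from a near-violating configuration and verifying the nonrepelling fixed-point condition is delicate, especially when the minimizing sequence escapes to the ideal boundary, where the limiting dynamical type of $k$ is a priori ambiguous.
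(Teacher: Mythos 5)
Your toolkit is the right one (the extended function $F_{g,g'}$ of Observation~\ref{obs:ext-diff-d}, horoballs at fixed points, and the characterization $k\in g\ACP(\ell)\Leftrightarrow g^{-1}k$ has a nonrepelling fixed point in $\overline{\ell}$), and your $(2)\Rightarrow(3)$ and the horoball comparison inside $(3)\Rightarrow(1)$ match what the paper does (the paper proves the right-handed Theorem~\ref{thm:AdS-disj-right} and transfers via $g\mapsto g^{-1}$, which is cosmetic). But there is a structural gap that breaks your cycle $(2)\Rightarrow(3)\Rightarrow(1)\Rightarrow(2)$. In your $(3)\Rightarrow(1)$, an element $k\in g\ACP(\ell)\cap g'\ACP(\ell')$ produces nonrepelling fixed points $p\in\overline{\ell}$, $p'\in\overline{\ell'}$ of $g^{-1}k$, $g'^{-1}k$, and the comparison yields $F_{g,g'}(p,p')\leq 0$; this contradicts~(3) only when \emph{both} $p$ and $p'$ are ideal. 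But $g^{-1}k$ may perfectly well be elliptic --- that is exactly the stem of $\ACP(\ell)$ --- in which case $p$ is an interior point of $\ell$ and $F_{g,g'}(p,p')\leq 0$ contradicts~(2), not~(3). So what your argument actually establishes is $(2)\Rightarrow(1)$; together with $(2)\Rightarrow(3)$ and $(1)\Rightarrow(2)$, condition~(3) never appears as a hypothesis and the equivalence is not proved. The missing ingredient is the paper's key step behind $(3)\Rightarrow(2)$: for fixed $x\in\overline{\ell}$, the set $\{x'~|~F_{g,g'}(x,x')\leq 0\}=\{x'~|~d(x',g'^{-1}g\cdot x)\leq d(x',x)\}$ is the closure of a half-plane of~$\HH^2$ (or everything), hence if it meets $\overline{\ell'}$ it contains an ideal endpoint of~$\ell'$; applying this in each variable pushes any point where $F_{g,g'}\leq 0$ out to a corner of $\overline{\ell}\times\overline{\ell'}$, where it does contradict~(3). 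You need this propagation lemma (or an equivalent) somewhere.

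Separately, your $(1)\Rightarrow(2)$ is not yet a proof, as you acknowledge: extracting $k$ as a limit of isometries matching midpoints and tangent directions of $[x_n,x'_n]$ and $[g\cdot x_n,g'\cdot x'_n]$ leaves open precisely the hard points --- the rotational degree of freedom of each $k_n$ is not pinned down, the sequence may degenerate when the minimizer escapes to infinity, and nothing forces the limiting $g^{-1}k$, $g'^{-1}k$ to have nonrepelling fixed points on $\overline{\ell}$, $\overline{\ell'}$ rather than elsewhere. The paper avoids this entirely by proving $(1)\Rightarrow(3)$ by contraposition with an \emph{explicit} construction at the ideal corners: if $F_{g,g'}(\xi,\xi')$ has the wrong sign at distinct endpoints, one writes down an isometry $h$ carrying the oriented line $(\xi,\xi')$ to $(g\cdot\xi,g'\cdot\xi')$ with the appropriate horoball containments, so that $h$ lies in both crooked planes; a separate explicit construction (a parabolic, or a rotation composed with a parabolic) handles the degenerate case of a shared image endpoint, which your sketch also leaves uncovered. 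Working at ideal points turns the fixed-point condition into a one-line horoball statement, whereas your interior-point limit argument would have to reprove it in a strictly harder setting. Once $(1)\Rightarrow(3)$ and $(3)\Rightarrow(2)$ are in place, your loop closes; your treatment of the final half-space assertion via connectedness and the four orientation configurations is fine, and is essentially the paper's use of Remark~\ref{rem:extremelines}.
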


(Here we endow $\ell$ and~$\ell'$ with the transverse orientations induced by those of $g\cdot\ell$ and $g'\cdot\ell'$ via $g$ and~$g'$; see Remark~\ref{rem:conj-ACP}.)

\begin{theorem}\label{thm:AdS-disj-right}
Let $\ell,\ell'$ be geodesic lines of~$\HH^2$.
For $g,g'\in G_0$, the following conditions are equivalent:
\begin{enumerate}
  \item the right $\AdSS$ crooked planes $g\ACP^*(\ell)$ and $g'\ACP^*(\ell')$ are disjoint;
  \item there exists $K > 0$ such that $d(g\cdot x, g'\cdot x') - d(x,x') \leq -K$ for all $x\in\ell$ and $x'\in\ell'$;
  \item for any endpoints $\xi$ of~$\ell$ and $\xi'$ of~$\ell'$, we have $\xi\neq\xi'$ and $d(g\cdot\xi, g'\cdot\nolinebreak\xi') - d(\xi,\xi') < 0$.
\end{enumerate}
If these conditions hold, then $\ell$ and~$\ell'$ have disjoint closures in $\HH^2\cup\nolinebreak\partial_{\infty}\HH^2$.

The closed right $\AdSS$ crooked \emph{half-spaces} $g\overline{\ACHS_+^*(\ell)}$ and $g'\overline{\ACHS_+^*(\ell')}$ are disjoint if and only if (1), (2), (3) hold and $\ell$ and~$\ell'$ are transversely oriented away from each other.
\end{theorem}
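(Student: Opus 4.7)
My plan is to deduce Theorem~\ref{thm:AdS-disj-right} from Theorem~\ref{thm:AdS-disj-left} by pulling it back through the orientation-reversing isometry $\iota\colon\AdS\to\AdS$, $g\mapsto g^{-1}$. By definition, $\ACP^*(\ell)$ consists of the inverses of the elements of $\ACP(\ell)$, so $\iota\bigl(g\,\ACP^*(\ell)\bigr)=\ACP(\ell)\,g^{-1}$, and the conjugation identity $\ACP(g\cdot\ell)=g\,\ACP(\ell)\,g^{-1}$ from Remark~\ref{rem:conj-ACP} lets me rewrite this as $g^{-1}\ACP(g\cdot\ell)$, a left $\AdSS$ crooked plane to which Theorem~\ref{thm:AdS-disj-left} directly applies. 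Hence the disjointness of $g\,\ACP^*(\ell)$ and $g'\,\ACP^*(\ell')$ is equivalent to the disjointness of $g^{-1}\ACP(g\cdot\ell)$ and $g'^{-1}\ACP(g'\cdot\ell')$.

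The next step is to translate each of conditions (1)--(3) of Theorem~\ref{thm:AdS-disj-left}, applied with $(h,\tilde\ell)=(g^{-1},g\cdot\ell)$ and $(h',\tilde\ell')=(g'^{-1},g'\cdot\ell')$, via the reparametrization $x=g\cdot y$ with $y\in\ell$ (and analogously $\xi=g\cdot\eta$ for endpoints). Since $d(g^{-1}\cdot x,\,g'^{-1}\cdot x')=d(y,y')$ while $d(x,x')=d(g\cdot y,g'\cdot y')$, the Lipschitz-expansion inequality $d(h\cdot x,h'\cdot x')-d(x,x')\geq K$ converts into the Lipschitz-contraction inequality $d(g\cdot y,g'\cdot y')-d(y,y')\leq -K$ of condition~(2) here, and analogously the strict inequality in~(3) flips sign. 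The conclusion that $h\cdot\tilde\ell=\ell$ and $h'\cdot\tilde\ell'=\ell'$ have disjoint closures in $\HH^2\cup\partial_\infty\HH^2$ is already the extra conclusion sought.

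For the half-space version, I would first verify that $\iota\bigl(\overline{\ACHS_+^*(\ell)}\bigr)=\overline{\ACHS_+(\ell)}$ for any fixed transverse orientation of~$\ell$. Since $\iota$ is a homeomorphism sending $\ACP^*(\ell)$ to $\ACP(\ell)$, the set $\iota(\ACHS_+^*(\ell))$ is one of the two connected components of $\AdS\smallsetminus\ACP(\ell)$; testing on a hyperbolic element whose repelling fixed point lies on the positive side of~$\overline\ell$ identifies it as $\ACHS_+(\ell)$ (such an element is in $\ACHS_+^*(\ell)$, and its inverse has attracting fixed point on the positive side, hence lies in $\ACHS_+(\ell)$). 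Combined with Remark~\ref{rem:conj-ACP}, this yields $\iota\bigl(g\,\overline{\ACHS_+^*(\ell)}\bigr)=g^{-1}\overline{\ACHS_+(g\cdot\ell)}$, with the transverse orientation on $g\cdot\ell$ pushed from that of~$\ell$ by~$g$. The half-space version of Theorem~\ref{thm:AdS-disj-left} now produces disjointness exactly when (1)--(3) hold and $\ell=g^{-1}\cdot(g\cdot\ell)$ and $\ell'=g'^{-1}\cdot(g'\cdot\ell')$ are transversely oriented away from each other, which is what we want.

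The one place demanding genuine bookkeeping is the convention for $d(\xi,\xi')$ when $\xi,\xi'$ are ideal: the passage above implicitly uses that this horoball-based quantity (as laid out in Observation~\ref{obs:ext-diff-d} and Remark~\ref{rem:concrete-Fgg'}) transforms compatibly under~$\iota$, which on $\partial_\infty\AdS\simeq\PP^1(\RR)\times\PP^1(\RR)$ acts by swapping the two factors (compare Remark~\ref{rem:extremelines}). I expect this to be routine rather than a real obstacle, but it is the one spot where one cannot simply appeal to $\iota$ being an isometry of the interior of~$\AdS$.
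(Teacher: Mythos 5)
Your reduction of Theorem~\ref{thm:AdS-disj-right} to Theorem~\ref{thm:AdS-disj-left} via the inversion $\iota\colon g\mapsto g^{-1}$ is correct in every detail --- the identity $\iota\bigl(g\,\ACP^*(\ell)\bigr)=g^{-1}\ACP(g\cdot\ell)$, the sign flip in conditions (2) and (3) under the substitution $x=g\cdot y$, and the matching of half-spaces and transverse orientations all check out, and indeed this is exactly the equivalence the paper itself records at the start of its proof section. The problem is that you have used the equivalence in the wrong direction: in the paper, Theorem~\ref{thm:AdS-disj-left} has no independent proof --- it is \emph{deduced from} Theorem~\ref{thm:AdS-disj-right} by this very inversion argument, and all the substantive work is done in a direct proof of Theorem~\ref{thm:AdS-disj-right}. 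Taking Theorem~\ref{thm:AdS-disj-left} as an available input to prove Theorem~\ref{thm:AdS-disj-right} is therefore circular: you have shown the two statements are equivalent, but proved neither.

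What is missing is the entire core of the argument, namely the proof that (1), (2), (3) are mutually equivalent for right crooked planes. Concretely, the paper's proof needs: the continuity of $F_{g,g'}(x,x')=d(g\cdot x,g'\cdot x')-d(x,x')$ up to the ideal boundary (Observation~\ref{obs:ext-diff-d}), which gives $(2)\Rightarrow(3)$; the observation that $\{x'\,:\,F_{g,g'}(x,x')\geq 0\}$ is the closure of a half-plane (or everything), so that a failure of (2) can be pushed to ideal endpoints, giving $(3)\Rightarrow(2)$; the construction, for $h\in\ACP^*(\ell)$, of a point or horoball $x$ with $x\subset h\cdot x$, which turns the uniform contraction of (2) into the disjointness (1); and the converse $(1)\Rightarrow(3)$ by exhibiting an explicit isometry in $g\ACP^*(\ell)\cap g'\ACP^*(\ell')$ when $F_{g,g'}(\xi,\xi')\geq 0$ or when $\ell,\ell'$ share an endpoint. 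None of this can be recovered from the inversion trick alone; you would need either to supply these arguments directly for the right-handed version, or to give an independent proof of Theorem~\ref{thm:AdS-disj-left} before invoking it.
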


\begin{remark}\label{rem:disj-CP-nodisj-geod}
In Theorem~\ref{thm:AdS-disj-left}, the left $\AdSS$ crooked planes $g\ACP(\ell)$ and $g'\ACP(\ell')$ can be disjoint in $\AdS$ without $\ell$ and~$\ell'$ being disjoint in~$\HH^2$.
For instance, $\ACP(\ell)\cap g\ACP(\ell)=\emptyset$ for all $g\in\SQ(\ell)$: see \cite[Rem.\,8.4]{dgk-strips}.
Similarly, in Theorem~\ref{thm:AdS-disj-right}, the right $\AdSS$ crooked planes $g\ACP^*(\ell)$ and $g'\ACP^*(\ell')$ can be disjoint in $\AdS$ without $g\cdot\ell$ and $g'\cdot\ell'$ being disjoint in~$\HH^2$.
This should be contrasted with the Minkowski setting: two crooked planes in~$\RR^{2,1}$ always intersect if the geodesic lines of~$\HH^2$ corresponding to their stems intersect (see \cite[Prop.\,6.3]{bcdg13}).
\end{remark}

\subsection{Meaning of Condition~(3)}

In Condition~(3) of both theorems above, the expression $d(g\cdot\xi, g'\cdot\xi') - d(\xi,\xi')$ should be understood as $F_{g,g'}(\xi,\xi')$, where $F_{g,g'}$ is defined as follows.
We set $\overline{\HH^2}:=\HH^2\cup\partial_{\infty}\HH^2$.

\begin{observation}\label{obs:ext-diff-d}
For any $g,g'\in G_0$, the function
$$F_{g,g'} :\ (x,x') \,\longmapsto\, d(g\cdot x,g'\cdot x') - d(x,x')$$
on $\HH^2\times\HH^2$, with values in~$\RR$, extends continuously to a function $F_{g,g'}$ on
$$H_{g,g'}:=(\overline{\HH^2}\times\overline{\HH^2}) \smallsetminus \{(\xi,\xi') \in \partial_{\infty}\HH^2\times\partial_{\infty}\HH^2 ~|~ \xi=\xi'\text{ and }g\cdot \xi = g'\cdot \xi' \},$$
with values in $\RR\cup\{-\infty, +\infty\}$.
\end{observation}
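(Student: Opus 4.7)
The plan is to decompose $F_{g,g'}$ via the Gromov product. Fix a basepoint $o\in\HH^2$ and define, for $x,x'\in\HH^2$, the Gromov product
\[
(x|x')_o := \tfrac{1}{2}\bigl(d(x,o) + d(o,x') - d(x,x')\bigr).
\]
Expanding both $d(x,x')$ and $d(gx,g'x')$ via this identity, and using isometry invariance $d(gx,o)=d(x,g^{-1}o)$ and $d(o,g'x')=d(g'^{-1}o,x')$, one gets
\[
F_{g,g'}(x,x') = \bigl(d(x,g^{-1}o)-d(x,o)\bigr) + \bigl(d(g'^{-1}o,x')-d(o,x')\bigr) + 2\bigl((x|x')_o-(gx|g'x')_o\bigr).
\]
I would then treat each of the three summands separately.

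The first two summands are Busemann-type cocycles, each of the form $d(\cdot,y)-d(\cdot,o)$ in one of the variables (with $y=g^{-1}o$ in the first and $y=g'^{-1}o$ in the second). By standard properties of the horofunction boundary of $\HH^2$, each extends continuously from $\HH^2$ to $\overline{\HH^2}$ with real values; the limit at $\xi\in\partial_{\infty}\HH^2$ is the Busemann function at $\xi$ (normalized to vanish at~$o$) evaluated at~$y$. Since these two terms depend on $x$ and~$x'$ separately, their joint extension to $\overline{\HH^2}\times\overline{\HH^2}$ is automatic and real-valued everywhere.

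For the Gromov-product summand, the key fact is that $(x|x')_o$, viewed on $\HH^2\times\HH^2$, extends continuously to a function $\overline{\HH^2}\times\overline{\HH^2}\to[0,+\infty]$ which equals $+\infty$ precisely on the ideal diagonal $\{(\xi,\xi):\xi\in\partial_{\infty}\HH^2\}$. This is a standard feature of the visual compactification of a $\mathrm{CAT}(-1)$ space. Applied both to $(x|x')_o$ and to $(gx|g'x')_o$ (the latter via continuity of the $G_0$-action on $\overline{\HH^2}$), it shows that the difference $(x|x')_o-(gx|g'x')_o$ extends continuously to $\overline{\HH^2}\times\overline{\HH^2}$ with values in $\RR\cup\{\pm\infty\}$, except at points where it takes the indeterminate form $(+\infty)-(+\infty)$; those are precisely the pairs $(\xi,\xi)\in\partial_{\infty}\HH^2\times\partial_{\infty}\HH^2$ with $g\cdot\xi=g'\cdot\xi$, namely the set removed to form~$H_{g,g'}$.

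Recombining the three summands yields the desired continuous extension of $F_{g,g'}$ to $H_{g,g'}$: real-valued at interior points and at pairs of ideal points with $\xi\neq\xi'$ and $g\cdot\xi\neq g'\cdot\xi'$, equal to $+\infty$ on pairs with $\xi=\xi'$ and $g\cdot\xi\neq g'\cdot\xi'$, and equal to $-\infty$ on pairs with $\xi\neq\xi'$ but $g\cdot\xi=g'\cdot\xi'$. The main conceptual step is the continuous extension of the Gromov product with its precise $+\infty$-set; once that is in hand, matching the indeterminate set to the one removed in the definition of $H_{g,g'}$ is the bookkeeping that makes the observation work.
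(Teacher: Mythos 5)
Your argument is correct, and it rests on the same mechanism as the paper's proof, though packaged differently. The paper fixes a basepoint $p_0$, sets $\overline{d}(p,p')=\inf_{\HH^2}(b_{p_0,p}+b_{p_0,p'})$, and proves by hand (upper plus lower semicontinuity, using that the geodesics from $p_n$ to $p'_n$ eventually visit a fixed compact set whenever $\overline{d}(p,p')\neq-\infty$) that $\overline{d}$ extends continuously to $\overline{\HH^2}\times\overline{\HH^2}$ with values in $\RR\cup\{-\infty\}$. Since $\overline{d}(p,p')=d(p,p')-d(p,p_0)-d(p',p_0)$ on the interior, this $\overline{d}$ is exactly $-2(p|p')_{p_0}$: the paper is proving from scratch the continuous extension of the Gromov product that you cite as a standard CAT$(-1)$ fact. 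It then writes $F_{g,g'}(p,p')=\overline{d}_{g,g'}(g\cdot p,g'\cdot p')-\overline{d}(p,p')$, absorbing the basepoint-change terms into the two-basepoint function $\overline{d}_{g,g'}$ instead of splitting them off as the two Busemann cocycles of your three-term decomposition. Your version is more modular and makes the $\pm\infty$ bookkeeping transparent — the indeterminate form $(+\infty)-(+\infty)$ visibly occurs exactly on $\{\xi=\xi'\text{ and }g\cdot\xi=g'\cdot\xi'\}$, which is the set removed to form $H_{g,g'}$ — at the cost of outsourcing the one genuine analytic input. If you want your write-up to be self-contained, include a proof that $(x,x')\mapsto(x|x')_o$ extends continuously to $\overline{\HH^2}\times\overline{\HH^2}\to[0,+\infty]$ with $+\infty$ exactly on the ideal diagonal; the paper's semicontinuity argument for $\overline{d}$ is precisely such a proof, so nothing beyond it is needed.
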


\begin{proof}
Let $C(\HH^2)$ be the space of continuous functions from $\HH^2$ to~$\RR$, endowed with the topology of uniform convergence on compact sets.
Fix a basepoint $p_0\in\HH^2$.
For $p\in\HH^2$, let $b_{p_0,p}\in C(\HH^2)$ be given by
$$b_{p_0,p}(x) := d(p,x) - d(p,p_0)$$
for all $x\in\HH^2$.
This defines a map $b_{p_0} : \HH^2\to C(\HH^2)$.
It is well known (see \cite[Ch.\,II, \S\,1]{bal95} for instance) that $b_{p_0}$ is continuous and injective, and extends to a continuous and injective map $b_{p_0} : \overline{\HH^2}\to C(\HH^2)$ sending any $\xi\in\partial_{\infty}\HH^2$ to the Busemann function associated with $\xi$ and~$p_0$.
Define
$$\begin{array}{rccc}
\overline{d} : & \overline{\HH^2}\times\overline{\HH^2} & \longrightarrow & \RR\cup\{-\infty\} \\
& (p,p') & \longmapsto & \inf_{\HH^2}(b_{p_0,p} + b_{p_0,p'}) .
\end{array}$$
Note that $\overline{d}(p,p')=-\infty$ if and only if $p=p'\in\partial_{\infty}\HH^2$; otherwise, $b_{p_0,p}+b_{p_0,p'}$ achieves its infimum $\overline{d}(p,p')\in\RR$ exactly on the geodesic segment (or ray,~or line) of~$\HH^2$ connecting $p$ to~$p'$.
Moreover, $\overline{d}(p,p')=d(p,p')-\nolinebreak d(p,p_0)-\nolinebreak d(p',p_0)$ for $p,p'\in\HH^2$.
The map $\overline{d}$ is clearly upper semicontinuous: if $(p_n,p'_n)\to (p,p')$ in $\overline{\HH^2}\times\overline{\HH^2}$, then $\limsup_n\,\overline{d}(p_n,p'_n) \leq \overline{d}(p,p')$.
It is also lower semicontinuous: indeed, suppose $\overline{d}(p,p')\neq -\infty$.
For large~$n$, the geodesic segments (or rays, or lines) from $p_n$ to~$p'_n$ (on which $b_{p_0,p_n}+b_{p_0,p'_n}=\overline{d}(p_n,p'_n)$) all visit a fixed compact region of~$\HH^2$; we then use the uniform convergence of $b_{p_0,p_n}+b_{p_0,p'_n}$ to $b_{p_0,p}+b_{p_0,p'}$ on this compact region.
Thus $\overline{d}$ is continuous on $\overline{\HH^2}\times\overline{\HH^2}$.

Similarly, for $g,g'\in G_0$, the map
$$\begin{array}{rccc}
\overline{d}_{g,g'} : & \overline{\HH^2}\times\overline{\HH^2} & \longrightarrow & \RR\cup\{-\infty\} \\
& (p,p') & \longmapsto & \inf_{\HH^2}(b_{g\cdot p_0,p} + b_{g'\cdot p_0,p'})
\end{array}$$
is continuous.
The function $F_{g,g'} : \HH^2\times\HH^2\to\RR$ extends continuously to a function $F_{g,g'} : H_{g,g'}\to\RR\cup\{ \pm\infty\}$ by setting
$$F_{g,g'}(p,p') := \overline{d}_{g,g'}(g\cdot p,g'\cdot p') - \overline{d}(p,p') .$$
Indeed, the right-hand side never has the form $\infty-\infty$ if $(p,p')\in H_{g,g'}$.
\end{proof}

Concretely, we can express $F_{g,g'}$ in terms of distances between horoballs:

\begin{remark}\label{rem:concrete-Fgg'}
For any $\xi,\xi'\in\partial_{\infty}\HH^2$ with $(\xi,\xi')\in H_{g,g'}$, for any small enough horoballs $B,B'$ of~$\HH^2$ centered respectively at $\xi,\xi'$, and for any $x,x'\in\HH^2$,
$$\left \{ \begin{array}{lll}
F_{g,g'}(\xi,x') & = & d(g\cdot B,g'\cdot x') - d(B,x'),\\
F_{g,g'}(x,\xi') & = & d(g\cdot x,g'\cdot B') - d(x,B'),\\
F_{g,g'}(\xi,\xi') & = & d(g\cdot B, g'\cdot B') - d(B, B'),
\end{array} \right .$$
where we define the distance between two disjoint subsets of~$\HH^2$ to be the infimum of distances between pairs of points in these two subsets, and declare the distance between two concentric horoballs to be $-\infty$.
\end{remark}

\subsection{Proof of Theorems \ref{thm:AdS-disj-left} and~\ref{thm:AdS-disj-right}}

By Remark~\ref{rem:conj-ACP}, the left $\AdSS$ crooked half-space $g\,\overline{\ACHS_+(\ell)}$, bounded by $g\ACP(\ell)$, is the inverse of the right $\AdSS$ crooked half-space $g^{-1}\,\overline{\ACHS^*_+(g\cdot\ell)}$, bounded by $g^{-1}\,\ACP^*(g\cdot\ell)$, and similarly for $g'\,\overline{\ACHS_+(\ell')}$ and ${g'}^{-1}\,\overline{\ACHS^*_+(g'\cdot\ell')}$.
Therefore, Theorem~\ref{thm:AdS-disj-left} is equivalent to Theorem~\ref{thm:AdS-disj-right}.
We now prove Theorem~\ref{thm:AdS-disj-right}.

\begin{proof}[Proof of Theorem~\ref{thm:AdS-disj-right}]
We use the notation $F_{g,g'}$ from Observation~\ref{obs:ext-diff-d}, and write $\overline{\ell}, \overline{\ell'}$ for the respective closures of $\ell, \ell'$ in $\overline{\HH^2}=\HH^2\cup\partial_{\infty}\HH^2$.

We first observe that if (2) holds, then $\overline{\ell}$ and~$\overline{\ell'}$ are disjoint in $\overline{\HH^2}$: otherwise, by taking $x\in\ell$ and $x'\in\nolinebreak\ell'$ close enough to each other, we would obtain $d(g\cdot x,g'\cdot x')=F_{g,g'}(x,x')+d(x,x')<0$, a contradiction.
In particular, if (2) or~(3) holds, then $F_{g,g'}$ is well defined and continuous on $\overline{\ell}\times\overline{\ell'}$ (Observation~\ref{obs:ext-diff-d}).

The implication $(2)\Rightarrow (3)$ follows from the continuity of~$F_{g,g'}$.

To prove $(3)\Rightarrow (2)$, suppose that $\ell$ and~$\ell'$ do not share any common endpoint at infinity and that $F_{g,g'}(x,x') \geq 0$ for some $(x,x')\in \overline{\ell}\times \overline{\ell'}$.
We claim that $x$ and $x'$ can both be taken in $\partial_{\infty}\HH^2$.
Indeed, given $x\in\nolinebreak\overline{\ell}$, let $D_x \subset \overline{\HH^2}$ be the set where $F_{g,g'}(x,\cdot)$ is defined (namely, $\overline{\HH^2}$ minus at most a singleton).
Consider the subset of points $x'\in D_x$ such that $F_{g,g'}(x,x')=d(x',g'^{-1}g\cdot x)-d(x',x)\geq 0$.
If nonempty, this subset is either the closure in~$D_x$ of a half-plane of~$\HH^2$ (if $x\neq g'^{-1}g\cdot x$) or the whole of~$D_x$ (otherwise). 
In particular, if this subset intersects $\overline{\ell'}\subset D_x$ then it contains an endpoint of~$\overline{\ell'}$.
Thus we may assume that $x'$ is ideal, equal to that endpoint.
Then, switching the roles of $x$ and~$x'$, we may assume that $x$ is ideal as well.
This proves $(3)\Rightarrow (2)$, by contraposition.

To prove $(2),(3)\Rightarrow (1)$, suppose that $\ell$ and~$\ell'$ do not share any common endpoint at infinity and that $F_{g,g'}\leq -K<0$ on $\overline{\ell}\times \overline{\ell'}$.
Consider $h\in\ACP^*(\ell)$ and $h'\in\ACP^*(\ell')$.
Recall (see Section~\ref{subsec:ACP}) that $\ACP^*(\ell)$ is the set of orientation-preserving isometries of~$\HH^2$ with a nonattracting fixed point in~$\overline{\ell}$.
Therefore, we can find a set $x\subset\HH^2$ that is either a singleton of~$\ell$ fixed by~$h$, or a small horoball centered in $\overline{\ell}$ that is preserved or expanded by~$h$.
Similarly, we can find a set $x'\subset\HH^2$ that is either a singleton of~$\ell'$ fixed by~$h'$, or a small horoball centered in $\overline{\ell'}$ that is preserved or expanded by~$h'$.
Since $x\subset h\cdot x$ and $x'\subset h'\cdot x'$, and since $F_{g,g'}<0$ on $\overline{\ell}\times\overline{\ell'}$, if $x$ and~$x'$ are taken small enough, then
$$d(gh\cdot x, g'h'\cdot x') = d(g\cdot x,g' \cdot x') -\lambda(h)-\lambda(h') < d(x,x') .$$
(As in Remark~\ref{rem:concrete-Fgg'}, we define the distance between two disjoint subsets of~$\HH^2$ to be the infimum of distances between pairs of points in these two subsets, and declare the distance between two concentric horoballs to be $-\infty$.
The function $\lambda$ is defined in \eqref{eqn:def-lambda}.)
In particular, $gh\neq g'h'$.
This proves $g\ACP^*(\ell) \cap g'\ACP^*(\ell')=\nolinebreak\emptyset$.

We observe that if $(1)$ holds, then $\ell$ and~$\ell'$ do not share any endpoint at infinity.
Indeed, by contraposition, suppose that $\ell$ and~$\ell'$ share an endpoint $\xi\in\partial_{\infty}\HH^2$.
Let $B$ be a horoball centered at~$\xi$.
We consider two cases: if $g\cdot \xi = g'\cdot \xi$, then for any hyperbolic isometry $h\in \ACP^*(\ell)$ that fixes~$\xi$ and expands~$B$ enough, the isometry $g'^{-1}gh$ also fixes~$\xi$ and expands~$B$, hence belongs to $\ACP^*(\ell')$, yielding $gh\in g\ACP^*(\ell) \cap g'\ACP^*(\ell')\neq\emptyset$.
If $g\cdot \xi \neq g'\cdot \xi$, choose $p\in\ell$ deep enough inside~$B$, so that $q:=g'^{-1}g\cdot p$ lies outside~$B$ (close to $g'^{-1}g\cdot \xi\neq \xi$).
There exists a rotation $h\in \ACP^*(\ell)$ centered at~$p$ such that $g'^{-1}gh\cdot \xi=\xi$.
The element $g'^{-1}gh$, which fixes~$\xi$ and takes $p$ to~$q$, expands~$B$, hence belongs to $\ACP^*(\ell')$.
Again we find $gh\in g\ACP^*(\ell) \cap g'\ACP^*(\ell')\neq\emptyset$.

We now prove $(1)\Rightarrow (3)$ by contraposition.
Suppose that $F_{g,g'}(\xi, \xi')\geq 0$ for some distinct endpoints $\xi$ of~$\ell$ and $\xi'$ of~$\ell'$; in particular, $g\cdot\xi\neq g'\cdot\xi'$ by definition of~$F_{g,g'}$ (see Remark~\ref{rem:concrete-Fgg'}).
Let $B,B'$ be two small horoballs centered respectively at $\xi, \xi'$.
The inequality $F_{g,g'}(\xi, \xi')\geq 0$ means that $g\cdot B$ is further away from $g'\cdot B'$ than $B$ is from $B'$. Therefore, among all orientation-preserving isometries taking the oriented line $(\xi, \xi')$ to $(g\cdot \xi, g'\cdot \xi')$, we can find one, let us call it $h$, such that $h\cdot B \supset g \cdot B$ and $h\cdot B' \supset g' \cdot B'$.
As a consequence, $g^{-1}h\in \ACP^*(\ell)$ and ${g'}^{-1}h\in \ACP^*(\ell')$, hence $h\in g\ACP^*(\ell) \cap g'\ACP^*(\ell')\neq\emptyset$.
This completes the proof of $(1)\Rightarrow (3)$.

Suppose that (1), (2), (3) hold.
The disjoint crooked planes $g\ACP^*(\ell)$ and $g'\ACP^*(\ell')$ define disjoint closed crooked half-spaces.
In order to make these the \emph{positive} ones $g\ACHS_+^*(\ell)$ and $g'\ACHS_+^*(\ell')$, we must transversely orient $\ell, \ell'$ \emph{away} from each other.
Indeed, by Remark~\ref{rem:extremelines}, the set of accumulation points in $\partial_{\infty}\AdS$ of $g\ACHS_+^*(\ell)$ (\resp $g'\ACHS_+^*(\ell')$) is the projectivization of the set of rank-one matrices whose projectivized kernel lies on the positive side of $\overline{\ell}$ (\resp of~$\overline{\ell'}$); the positive half-spaces of $\ell$ and~$\ell'$ in~$\overline{\HH^2}$ must thus be~disjoint.
\end{proof}

\section{Crooked fundamental domains and local isometries of~$\HH^2$}

In this section we give a necessary and sufficient condition for the existence of a fundamental domain in $\AdS$ bounded by pairwise disjoint $\AdSS$ crooked planes.
We may restrict to right crooked planes by the following remark.

\begin{remark}\label{rem:left-right-crooked-fund-dom}
Let $j,\rho\in\Hom(\Gamma,G_0)$ be two representations of a discrete group~$\Gamma$.
The group $\Gamma^{j,\rho}$ (see \eqref{eqn:def-jr}) acts properly discontinuously on $\AdS$ and admits a fundamental domain bounded by \emph{right} $\AdSS$ crooked planes if and only if the group $\Gamma^{\rho,j}$ acts properly discontinuously on $\AdS$ and admits a fundamental domain bounded by \emph{left} $\AdSS$ crooked planes.
\end{remark}

Indeed, switching $j$ and~$\rho$ amounts to conjugating the action on $\AdS=G_0$ by the orientation-reversing isometry $g\mapsto g^{-1}$, which switches left and right $\AdSS$ crooked planes.

\subsection{A necessary and sufficient condition}

Here is our main result.

\begin{theorem}\label{thm:charact-crooked-domains}
Let $\Gamma$ be a finitely generated free group and $j,\rho\in\Hom(\Gamma,G)$ two representations.
The group $\Gamma^{j,\rho}$ acts properly discontinuously on $\AdS$ and admits a fundamental domain in $\AdS$ bounded by pairwise disjoint right $\AdSS$ crooked planes if and only if there exist
\begin{itemize}
  \item a $j(\Gamma)$-invariant collection $\A$ of geodesic lines of~$\HH^2$ with pairwise disjoint closures in $\HH^2\cup\partial_{\infty}\HH^2$, dividing $\HH^2$ into fundamental domains for $j(\Gamma)$,
  \item a $(j,\rho)$-equivariant map $f : \bigcup_{\alpha\in\A}\alpha\rightarrow\HH^2$ which is an \emph{isometry} in restriction to any $\alpha\in\A$,
  \item a constant $K>0$ such that for any $\alpha\neq\alpha'$ in~$\A$, any $x\in\alpha$, and any $x'\in\alpha'$,
  \begin{equation}\label{eqn:f-brings-closer}
  d(f(x),f(x')) \leq d(x,x') - K .
  \end{equation}
\end{itemize}
In this case, $j$ is convex cocompact and $\rho$ is uniformly shorter than~$j$, in the sense of \eqref{eqn:propcritAdS}; a collection of pairwise disjoint right $\AdSS$ crooked planes dividing $\AdS$ into fundamental domains for~$\Gamma^{j,\rho}$ is given by the $g_{\alpha}\ACP^*(\alpha)$ for $\alpha\in\A$, where $g_{\alpha}\in G_0$ is the unique orientation-preserving isometry of~$\HH^2$ whose restriction to~$\alpha$ coincides with~$f$.
\end{theorem}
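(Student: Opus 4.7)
My plan is as follows. For the sufficiency direction, given data $(\A,f,K)$ satisfying the three bullet conditions, for each $\alpha\in\A$ let $g_\alpha\in G_0$ be the unique orientation-preserving isometry of $\HH^2$ extending $f|_\alpha$. The $(j,\rho)$-equivariance of $f$ translates into $g_{j(\gamma)\cdot\alpha}=\rho(\gamma)\,g_\alpha\, j(\gamma)^{-1}$, and together with Remark~\ref{rem:conj-ACP} this shows that the family $\{g_\alpha\ACP^*(\alpha)\}_{\alpha\in\A}$ of right $\AdSS$ crooked planes is $\Gamma^{j,\rho}$-invariant. Inequality~\eqref{eqn:f-brings-closer} is precisely condition~(2) of Theorem~\ref{thm:AdS-disj-right} applied to each pair $(g_\alpha,g_{\alpha'})$ with $\alpha\neq\alpha'$, so the associated right crooked planes are pairwise disjoint. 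Orienting each $\alpha$ transversely out of the adjacent $j(\Gamma)$-fundamental cell cut out by $\A$ (a coherent $j(\Gamma)$-equivariant choice), Theorem~\ref{thm:AdS-disj-right} further yields pairwise disjointness of the closed crooked half-spaces $g_\alpha\overline{\ACHS_+^*(\alpha)}$.

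I would then set $R:=\AdS\smallsetminus\bigcup_{\alpha\in\A}g_\alpha\ACHS_+^*(\alpha)$ and invoke Corollary~\ref{cor:AdS-Drumm-shortening}, the $\AdSS$ analogue of Drumm's theorem advertised in the introduction, to conclude that $\Gamma^{j,\rho}$ acts properly discontinuously on $\AdS$ with $R$ as a fundamental domain. Convex cocompactness of $j$ follows from the combined hypothesis that distinct elements of $\A$ have disjoint closures in $\overline{\HH^2}$ and tile $\HH^2$ into $j(\Gamma)$-fundamental regions: this rules out cusps in the quotient $j(\Gamma)\backslash\HH^2$ and hence parabolic elements in $j(\Gamma)$. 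Uniform shortness of $\rho$ relative to $j$ in the sense of~\eqref{eqn:propcritAdS} is extracted by extending $f$ cell by cell across the polygonal $\A$-regions, choosing on each cell an isometric extension agreeing with $f$ on the bounding lines; hypothesis~\eqref{eqn:f-brings-closer} then guarantees that the resulting global $(j,\rho)$-equivariant map is Lipschitz with constant strictly less than $1$, which is essentially the strip-deformation viewpoint of \cite{dgk-strips}.

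For the necessity direction, let $\{P_i\}_{i\in I}$ be a $\Gamma^{j,\rho}$-invariant family of pairwise disjoint right $\AdSS$ crooked planes bounding a fundamental domain $R\subset\AdS$. Each $P_i$ can be written uniquely as $g_i\ACP^*(\alpha_i)$ with center $g_i\in G_0$ and $\alpha_i$ a geodesic of $\HH^2$; the set $\A:=\{\alpha_i\}$ inherits $j(\Gamma)$-invariance from the $\Gamma^{j,\rho}$-permutation of the $P_i$, and $f|_{\alpha_i}:=g_i|_{\alpha_i}$ defines a $(j,\rho)$-equivariant map that is isometric on each $\alpha_i$. Theorem~\ref{thm:AdS-disj-right} supplies, for each distinct pair $(i,i')$, a constant $K_{i,i'}>0$ with $F_{g_i,g_{i'}}\leq -K_{i,i'}$ on $\overline{\alpha_i}\times\overline{\alpha_{i'}}$, which is exactly~\eqref{eqn:f-brings-closer}. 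To obtain a \emph{uniform} $K$, I would use that $\A/j(\Gamma)$ and its set of orbits of neighboring pairs are both finite (from finite generation of $\Gamma$ and the tiling property), giving only finitely many values of $K_{i,i'}$ on adjacent pairs, and then propagate the estimate to all pairs by a chaining argument along paths in the adjacency graph of $\A$. Finally, that $\A$ divides $\HH^2$ into fundamental domains for $j(\Gamma)$ can be read off from the $\Gamma^{j,\rho}$-tiling of $\AdS$ by translates of $R$.

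The principal obstacle is the uniformity of $K$ in the necessity direction: Theorem~\ref{thm:AdS-disj-right} hands us a positive $K_{i,i'}$ for each individual pair, but preventing $\inf_{i\neq i'}K_{i,i'}=0$ requires controlling the geometry of $\A$ across its full $j(\Gamma)$-orbit structure, which is where the finite-generation hypothesis on $\Gamma$ and the combinatorics of the $\A$-tiling must be used in earnest. A secondary difficulty, in the sufficiency direction, is verifying rigorously that $R$ is a genuine fundamental domain rather than merely tile-disjoint; this is precisely the content of Corollary~\ref{cor:AdS-Drumm-shortening}, whose proof will itself rest on a Poincaré-style polyhedron argument adapted to $\AdSS$ crooked polyhedra.
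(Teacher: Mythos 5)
Your necessity direction follows the paper's argument closely (extract $\A$ and $f$ from the invariant family of crooked planes, get a per-pair constant from Theorem~\ref{thm:AdS-disj-right}, make it uniform on adjacent pairs by finiteness of $\A$ modulo $j(\Gamma)$, and chain across tiles), and that part is sound. But the sufficiency direction has a genuine logical gap: you propose to conclude via Corollary~\ref{cor:AdS-Drumm-shortening}, yet in the paper that corollary is \emph{deduced from} Theorem~\ref{thm:charact-crooked-domains}, so invoking it here is circular. The substantive point your outline never supplies is why $\Gamma^{j,\rho}\cdot\mathsf{R}$ equals all of $\AdS$ rather than some invariant proper subdomain tiled by translates of $\mathsf{R}$. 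The paper proves this by showing that $g_\alpha\ACP^*(\alpha)$ leaves every compact subset of $\AdS$ as $\alpha$ leaves every compact subset of $\HH^2$: for $g_\alpha h$ with $h\in\ACP^*(\alpha)$ one picks a point of $\alpha$ fixed by $h$, or a small horoball preserved or expanded by $h$, and the chained inequality $d(f(x_0),f(x))\le d(x_0,x)-n_{x_0,x}K$ yields $d(g_\alpha h\cdot x_0,x_0)\ge n_{x_0,x}K-d(x_0,f(x_0))\to\infty$. Some such escape-to-infinity estimate (or an independent Poincar\'e-type argument, which you defer to the unproven corollary) is indispensable.

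A second, smaller but real, flaw is your derivation of the uniform contraction \eqref{eqn:propcritAdS}: you propose to extend $f$ over each tile by ``an isometric extension agreeing with $f$ on the bounding lines,'' but a tile is bounded by several lines $\alpha$ carrying \emph{distinct} isometries $g_\alpha$, and no single isometry of $\HH^2$ restricts to $f$ on all of them; building a genuinely $(1-\epsilon)$-Lipschitz equivariant extension over the $2$-cells is not automatic. The paper sidesteps this entirely: since each tile meets the preimage $\widetilde\Omega$ of the convex core in a set of diameter $\le K_0$, one has $d(x,x')\le n_{x,x'}K_0$ there, whence $d(f(x),f(x'))\le(1-K/K_0)\,d(x,x')$ on $\widetilde\Omega\cap\bigcup_\alpha\alpha$; applying this to a point on the axis of $j(\gamma)$ and its image under $j(\gamma)$ gives $\lambda(\rho(\gamma))\le(1-K/K_0)\lambda(j(\gamma))$ with no global extension needed. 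You should either adopt that argument or justify the Lipschitz extension across cells carefully.
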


\begin{remark}\label{rem:adjacent-edges}
By the triangle inequality, it is sufficient for \eqref{eqn:f-brings-closer} to hold for any \emph{adjacent} $\alpha\neq\alpha'$ in~$\A$.
\end{remark}

\begin{proof}[Proof of Theorem~\ref{thm:charact-crooked-domains}]
Suppose that $\Gamma^{j,\rho}$ acts properly discontinuously on $\AdS$ and admits a fundamental domain in $\AdS$ bounded by pairwise disjoint right $\AdSS$ crooked planes.
Translates of this fundamental domain define a $\Gamma^{j,\rho}$-invariant collection $(\DD^*_{\alpha})_{\alpha\in\A}$ of pairwise disjoint $\AdSS$ crooked planes in~$\AdS$.
For any $\alpha \in \A$, write $\DD^*_{\alpha}=g_{\alpha}\ACP^*(\ell_\alpha)$, where $g_{\alpha}\in G_0$ and $\ell_\alpha$ is a geodesic line of~$\HH^2$.
Since the $\DD^*_{\alpha}$ are pairwise disjoint, Theorem~\ref{thm:AdS-disj-right} implies that the $\ell_{\alpha}$ are distinct, and so we may identify $\A$ with a collection of geodesic lines of~$\HH^2$, writing $\alpha$ for~$\ell_{\alpha}$.
In fact, Theorem~\ref{thm:AdS-disj-right} implies that the closures in $\HH^2\cup\partial_{\infty}\HH^2$ of the geodesics $\alpha\in\A$ are pairwise disjoint.
By Remark~\ref{rem:conj-ACP}, for any $\alpha\in\A$ and $\gamma\in\Gamma$,
\begin{equation}\label{eqn:f-equiv}
\rho(\gamma) \DD^*_{\alpha} \, j(\gamma)^{-1} = \rho(\gamma) g_{\alpha} \ACP^*(\ell_{\alpha}) j(\gamma)^{-1} = \rho(\gamma) g_{\alpha} j(\gamma)^{-1} \, \ACP^*\big(j(\gamma)\cdot\alpha\big) ,
\end{equation}
and so the collection $\A$ is $j(\Gamma)$-invariant.
In particular, $j$ is convex cocompact (otherwise, lines exiting a cusp would lift to lines of~$\HH^2$ that meet at infinity), and $\A$ divides~$\HH^2$ into fundamental domains for the action of $j(\Gamma)$.
Let $f : \bigcup_{\alpha\in\A}\alpha\rightarrow\HH^2$ be the map whose restriction to any $\alpha\in\A$ coincides with~$g_{\alpha}$.
By \eqref{eqn:f-equiv}, we have $g_{j(\gamma)\cdot \alpha}=\rho(\gamma) g_{\alpha} j(\gamma)^{-1}$ for any $\gamma\in\Gamma$ and $\alpha\in\A$, hence $f$ is $(j,\rho)$-equivariant.
By Theorem~\ref{thm:AdS-disj-right}, for any $\alpha\neq\alpha'$ in~$\A$ there exists $K>0$ such that for any $x\in\alpha$ and $x'\in\alpha'$,
\begin{equation}\label{eqn:f-brings-closer-bis}
d(f(x),f(x')) = d(g_{\alpha}\cdot x,g_{\alpha'}\cdot x') \leq d(x,x') - K .
\end{equation}
By finiteness of $\A$ modulo $j(\Gamma)$, this $K$ can be taken to be uniform for any $\alpha\neq\alpha'$ that are adjacent to a common \emph{tile}, where a tile is by definition a connected component of $\HH^2\smallsetminus\bigcup_{\alpha\in\A}$.
To see uniformity over all $\alpha\neq\alpha'$, we apply the triangle inequality to the intersection points of $[x,x']$ with $\bigcup_{\alpha\in\A} \alpha$ and obtain
\begin{equation}\label{eqn:n-x-x'}
d(f(x),f(x')) \leq d(x,x') - n_{x,x'}\,K
\end{equation}
for all $x,x'\in\bigcup_{\alpha\in\A} \alpha$, where $n_{x,x'}\in\NN$ is the number of tiles meeting the geodesic segment $[x,x']$.
We now prove that $\rho$ is uniformly shorter than~$j$.
By compactness of the convex core $\Omega$ of $j(\Gamma)\backslash\HH^2$, each tile meets the preimage $\widetilde{\Omega}\subset\HH^2$ of~$\Omega$ in a compact set, of diameter bounded by some uniform constant $K_0>0$.
In particular, for any $x,x'\in\widetilde{\Omega}\cap\bigcup_{\alpha\in\A} \alpha$,
$$d(x,x') \leq n_{x,x'} K_0.$$
Using \eqref{eqn:n-x-x'}, we obtain
$$d(f(x),f(x')) \leq \bigg(1 - \frac{K}{K_0}\bigg) \, d(x,x')$$
for all $x,x'\in\widetilde{\Omega}\cap\bigcup_{\alpha\in\A} \alpha$.
Taking $x$ on the translation axis of $j(\gamma)$ and $x'=j(\gamma)\cdot x$, and using the $(j,\rho)$-equivariance of~$f$, we obtain $\lambda(\rho(\gamma))\leq (1-K/K_0)\,\lambda(j(\gamma))$ for all $\gamma\in\Gamma$.
This proves that $\rho$ is uniformly shorter than~$j$.

Conversely, suppose that there exist a $j(\Gamma)$-invariant collection $\A$ of pairwise disjoint geodesics of~$\HH^2$, a $(j,\rho)$-equivariant map $f : j(\Gamma)\cdot\bigcup_{\alpha\in\A}\alpha\rightarrow\HH^2$ whose restriction to any $\alpha\in\A$ coincides with some isometry $g_{\alpha}\in G_0$ of~$\HH^2$, and a constant $K>0$ such that for any $\alpha\neq\alpha'$ in~$\A$ and $(x,x')\in\alpha\times\alpha'$,
$$d(f(x),f(x')) \leq d(x,x') - K .$$
By Theorem~\ref{thm:AdS-disj-right}, the $\AdSS$ crooked planes $\DD^*_{\alpha}:=g_{\alpha}\ACP^*(\alpha)$, for $\alpha\in\A$, are pairwise disjoint.
Since $f$ is $(j,\rho)$-equivariant, $\DD^*_{j(\gamma)\cdot\alpha}=\rho(\gamma)\DD^*_{\alpha}j(\gamma)^{-1}$ for all $\gamma\in\Gamma$ and $\alpha\in\A$ (see \eqref{eqn:f-equiv}).
Let $\alpha_1,\dots,\alpha_m\in\A$ bound a fundamental domain of $\HH^2$ for the action of $j(\Gamma)$, and let $\mathsf{R}$ be the closed subset of $\AdS$ bounded by $\DD^*_{\alpha_1},\dots,\DD^*_{\alpha_m}$.
Then $\Gamma^{j,\rho}$ acts properly discontinuously on $\Gamma^{j,\rho}\cdot\mathsf{R}$, and $\mathsf{R}$ is a fundamental domain for this action.
To prove that $\Gamma^{j,\rho}\cdot\mathsf{R}$ is equal to the whole of $\AdS$, it is enough to show that $\DD^*_{\alpha}$ goes to infinity in $\AdS$ as $\alpha\in\A$ goes to infinity in~$\HH^2$.
Fix a point $x_0$ on some geodesic line $\alpha_0\in\A$.
Consider an element $\alpha\in\A$ and a point
$$g_{\alpha}h\in\DD^*_{\alpha},$$
where $h\in\ACP^*(\alpha)$.
By definition of $\ACP^*(\alpha)$ (see Section~\ref{subsec:ACP}), we can find a subset $x$ of~$\HH^2$ that is either a singleton of~$\alpha$ fixed by~$h$, or a small horoball centered at an endpoint of~$\alpha$ that is preserved or expanded by~$h$.
By the triangle inequality and \eqref{eqn:n-x-x'} (which remains true when $x$ is a small horoball, due to Observation~\ref{obs:ext-diff-d} and Remark~\ref{rem:concrete-Fgg'}),
\begin{eqnarray*}
d(x_0,x) & = & d(g_{\alpha}h\cdot x_0, g_{\alpha}h\cdot x)\\
& \leq & d(g_{\alpha}h\cdot x_0, g_\alpha \cdot x) \ =\ d(g_{\alpha}h\cdot x_0, f(x))\\ 
& \leq & d(g_{\alpha}h\cdot x_0, x_0) + d(x_0, f(x_0)) + d(f(x_0), f(x))\\
& \leq & d(g_{\alpha}h\cdot x_0, x_0) + d(x_0, f(x_0)) + d(x_0, x) - n_{x_0,x} K ,
\end{eqnarray*}
and so $d(g_{\alpha}h\cdot x_0,x_0)\geq n_{x_0,x} K - d(x_0, f(x_0))$.
Note that $n_{x_0,x}\in\NN$ depends only on~$\alpha$, and goes to infinity as $\alpha\in\A$ goes to infinity in~$\HH^2$.
Therefore, $\DD^*_{\alpha}$ goes to infinity in $\AdS$ as $\alpha\in\A$ goes to infinity in~$\HH^2$.
\end{proof}

\subsection{Disjoint $\AdSS$ crooked planes lead to proper actions on $\AdS$}

In \cite{dru92}, Drumm proved that gluing together a polyhedron $\mathcal{R}$ bounded by pairwise disjoint crooked planes in the $3$-dimensional Minkowski space $\RR^{2,1}$ yields a \emph{Lorentzian Schottky group}, \ie a free group acting properly discontinuously with $\mathcal{R}$ as a fundamental domain.
One direction of Theorem~\ref{thm:charact-crooked-domains} immediately implies the following analogue of Drumm's theorem in the $\AdSS$ setting.
(We denote by $\mathrm{Int}$ the interior of a subset of~$\HH^2$.)

\begin{corollary}\label{cor:AdS-Drumm-shortening}
Let $\Gamma$ be a finitely generated free group with free generating subset $\{ \gamma_1,\dots,\gamma_r\}$.
Let $L_1, L_1', \ldots, L_r, L_r'$ be closed half-planes in~$\HH^2$ with pairwise disjoint closures in $\HH^2\cup\partial_{\infty}\HH^2$, and let $j\in\Hom(\Gamma,G_0)$ be a representation such that
$$j(\gamma_i)\cdot L_i = \HH^2 \smallsetminus \mathrm{Int}(L_i')$$
for all~$i$.
Suppose that for some $g_i,g'_i\in G_0$, the $2r$ right $\AdSS$ crooked half-spaces $g_i\overline{\ACHS^*_+(\ell_i)}$ and $g_i' \overline{\ACHS^*_+(\ell_i')}$, for $1\leq i\leq r$, are all pairwise disjoint, where $\ell_i, \ell_i'$ are the transversely oriented geodesic lines of~$\HH^2$ whose positive half-planes are $L_i, L_i'$, respectively.
Then the representation $\rho\in\Hom(\Gamma,G_0)$ defined by
$$\rho(\gamma_i) = g'_i \, j(\gamma_i) \, g_i^{-1}$$
for all~$i$ is uniformly shorter than~$j$ in the sense of \eqref{eqn:propcritAdS}; the group $\Gamma^{j,\rho}$ acts properly discontinuously on $\AdS$, and a fundamental domain $\mathsf{R}$ is given by the complement in $\AdS$ of the union of the $\AdSS$ crooked half-spaces $g_i\overline{ \ACHS^*_+(\ell_i)}$ and $g_i' \overline{\ACHS^*_+(\ell_i')}$.
\end{corollary}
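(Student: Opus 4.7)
The plan is to verify the hypotheses of the converse direction of Theorem~\ref{thm:charact-crooked-domains} and then invoke that theorem directly.

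First I would set up the Schottky-type structure in~$\HH^2$. The hypothesis $j(\gamma_i) \cdot L_i = \HH^2 \smallsetminus \mathrm{Int}(L_i')$ says precisely that $j(\gamma_i)$ is the pairing isometry for the disjoint half-planes $L_i$ and~$L_i'$. Combined with the pairwise-disjoint closures of all $2r$ half-planes in $\overline{\HH^2}=\HH^2\cup\partial_{\infty}\HH^2$, the classical ping-pong lemma shows that $j$ is faithful with convex cocompact (free Schottky) image, and that the $j(\Gamma)$-orbit
$$\A \,:=\, j(\Gamma)\cdot\{\ell_1,\dots,\ell_r\} \,=\, j(\Gamma)\cdot\{\ell_1,\ell_1',\dots,\ell_r,\ell_r'\}$$
(the second equality holds because $j(\gamma_i)\cdot\ell_i = \ell_i'$) consists of pairwise disjoint geodesics with pairwise disjoint closures in $\overline{\HH^2}$, dividing $\HH^2$ into tiles translated from the Schottky fundamental domain $\HH^2\smallsetminus\bigcup_i(\mathrm{Int}(L_i)\cup\mathrm{Int}(L_i'))$.

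Next I would define a $(j,\rho)$-equivariant map $f : \bigcup_{\alpha\in\A}\alpha \to \HH^2$. Because $\Gamma$ is free and the $j(\Gamma)$-action on $\A$ is free (no nontrivial element stabilizes one of the boundary geodesics, by the ping-pong configuration of fixed points), every $\alpha\in\A$ can be written uniquely as $j(\gamma)\cdot\ell_i$ for some $\gamma\in\Gamma$ and some index~$i$. I set $f|_\alpha := \rho(\gamma)\,g_i\,j(\gamma)^{-1}|_\alpha$, so that $f|_\alpha$ is the restriction of an orientation-preserving isometry of~$\HH^2$. The only compatibility to check is the match between $\ell_i'$ and $j(\gamma_i)\cdot\ell_i$: equivariance prescribes $f|_{\ell_i'}=\rho(\gamma_i)g_ij(\gamma_i)^{-1}|_{\ell_i'}$, and this equals~$g_i'$ on the nose thanks to the defining relation $\rho(\gamma_i)=g_i'j(\gamma_i)g_i^{-1}$. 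No other relations need checking since $\Gamma$ is free.

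Then I would obtain the metric contraction estimate. Applying Theorem~\ref{thm:AdS-disj-right} to the $2r$ pairwise disjoint right crooked half-spaces $g_i\overline{\ACHS_+^*(\ell_i)}$ and $g_i'\overline{\ACHS_+^*(\ell_i')}$ gives, for each pair of the underlying $\ell_i,\ell_i'$, a positive constant as in its Condition~(2); the minimum over these finitely many pairs yields a uniform constant $K_0>0$. Using $(j,\rho)$-equivariance of $f$ and the finiteness of $j(\Gamma)$-orbits of pairs of geodesics in~$\A$ adjacent to a common tile, this propagates to a uniform inequality $d(f(x),f(x'))\leq d(x,x')-K_0$ for all adjacent $\alpha\neq\alpha'$ in~$\A$ and all $(x,x')\in\alpha\times\alpha'$. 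Remark~\ref{rem:adjacent-edges} then extends the estimate to every pair $\alpha\neq\alpha'$ in~$\A$. The converse direction of Theorem~\ref{thm:charact-crooked-domains} then applies and delivers simultaneously that $\rho$ is uniformly shorter than~$j$ in the sense of~\eqref{eqn:propcritAdS}, that $\Gamma^{j,\rho}$ acts properly discontinuously on~$\AdS$, and that the crooked planes $g_\alpha\ACP^*(\alpha)$ for $\alpha\in\A$ bound a fundamental domain. The $2r$ distinguished members $g_i\ACP^*(\ell_i)$ and $g_i'\ACP^*(\ell_i')$ bound exactly the region $\mathsf{R}$ asserted in the statement.

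The main obstacle will be the careful bookkeeping of the Schottky structure and the well-definedness of~$f$; once that is in place, the contraction estimate is essentially immediate from Theorem~\ref{thm:AdS-disj-right} and Remark~\ref{rem:adjacent-edges}, and the rest is a direct invocation of Theorem~\ref{thm:charact-crooked-domains}.
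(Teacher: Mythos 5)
Your proposal is correct and follows essentially the same route as the paper: define $\A$ as the $j(\Gamma)$-orbit of the $\ell_i$, build the equivariant map $f$ with $f|_{\ell_i}=g_i$ (so that $f|_{\ell_i'}=g_i'$ by the defining relation $\rho(\gamma_i)=g_i'j(\gamma_i)g_i^{-1}$), extract the uniform contraction constant from Theorem~\ref{thm:AdS-disj-right} on the $2r$ distinguished geodesics, propagate it by equivariance to adjacent pairs, and conclude via Remark~\ref{rem:adjacent-edges} and the converse direction of Theorem~\ref{thm:charact-crooked-domains}. The extra care you take with the ping-pong structure and the well-definedness of $f$ is detail the paper leaves implicit, but the argument is the same.
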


\begin{proof}
Let $\A$ be the set of $j(\Gamma)$-translates of the~$\ell_i$, for $1\leq i\leq r$, and let $f : \bigcup_{\alpha\in\A} \alpha\rightarrow\HH^2$ be the $(j,\rho)$-equivariant map whose restriction to $\ell_i$ coincides with~$g_i$ for all~$i$ (in particular, the restriction of $f$ to $\ell'_i$ coincides with~$g'_i$).
By Theorem~\ref{thm:AdS-disj-right}, there exists $K>0$ such that \eqref{eqn:f-brings-closer} holds for any $\alpha\neq\alpha'$ in $\{ \ell_i,\ell'_i~|~1\leq i\leq r\}$, any $x\in\alpha$, and any $x'\in\alpha'$.
By equivariance, this is still true for any adjacent $\alpha\neq\alpha'$ in~$\A$, any $x\in\alpha$, and any $x'\in\alpha'$.
We conclude using Theorem~\ref{thm:charact-crooked-domains} and Remark~\ref{rem:adjacent-edges}.
\end{proof}

\section{Proper actions with no crooked fundamental domain}\label{sec:no-crooked-fd}

In this section we use the other direction of Theorem~\ref{thm:charact-crooked-domains} to prove Theorem~\ref{thm:no-crooked-fd}, which states the existence of proper actions without any crooked fundamental domain in $\AdS$.
We provide three families of examples.

\subsection{Necessity of convex cocompactness}

Here is an immediate consequence of Remark~\ref{rem:left-right-crooked-fund-dom} and Theorem~\ref{thm:charact-crooked-domains} --- more precisely, of the fact that in the conclusion of Theorem~\ref{thm:charact-crooked-domains} the representation $j$ is convex cocompact.

\begin{proposition}\label{prop:cusp-no-crook}
Let $\Gamma$ be a finitely generated free group and let $j,\rho\in\Hom(\Gamma,G_0)$ be two representations such that the group $\Gamma^{j,\rho}$ acts properly discontinuously on $\AdS$.
If neither $j$ nor~$\rho$ is convex cocompact, then the group $\Gamma^{j,\rho}$ cannot admit a fundamental domain in $\AdS$ bounded by pairwise disjoint $\AdSS$ crooked planes.
\end{proposition}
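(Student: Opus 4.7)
The plan is to argue by contradiction via the necessary direction of Theorem~\ref{thm:charact-crooked-domains}. Suppose $\Gamma^{j,\rho}$ admits a fundamental domain $\mathsf{R}$ in $\AdS$ bounded by pairwise disjoint $\AdSS$ crooked planes. Since $(G\times G)_+$ acts on $\AdS\simeq G_0$ by orientation-preserving isometries, whereas the involution $g\mapsto g^{-1}$ that exchanges left and right $\AdSS$ crooked planes is orientation-reversing, every face-pairing isometry of $\mathsf{R}$ identifies two boundary crooked planes of the same type. Consequently the boundary of $\mathsf{R}$ decomposes into ``right-type'' and ``left-type'' face pairs, and these types are preserved under the whole $\Gamma^{j,\rho}$-orbit of $\partial\mathsf{R}$.

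In the case that every face pair of $\mathsf{R}$ consists of right $\AdSS$ crooked planes, Theorem~\ref{thm:charact-crooked-domains} applies directly and its conclusion asserts that $j$ is convex cocompact, contradicting the hypothesis. In the case that every face pair consists of left $\AdSS$ crooked planes, Remark~\ref{rem:left-right-crooked-fund-dom} transfers the situation to $\Gamma^{\rho,j}$ (now bounded by right $\AdSS$ crooked planes), and Theorem~\ref{thm:charact-crooked-domains} forces $\rho$ to be convex cocompact, again a contradiction.

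The main obstacle is the mixed case, where both types of face pairs appear. Here the plan is to recycle the cusp step from the proof of Theorem~\ref{thm:charact-crooked-domains}, applied separately to each sub-family of boundary faces: by Theorem~\ref{thm:AdS-disj-right} the right-type pairs, together with their $j(\Gamma)$-orbits, produce a $j(\Gamma)$-invariant family of geodesics of $\HH^2$ whose closures in $\HH^2\cup\partial_\infty\HH^2$ are pairwise disjoint, so that if $j$ had a cusp then lines exiting it would lift to geodesics meeting at infinity, a contradiction; symmetrically, Theorem~\ref{thm:AdS-disj-left} applied to the left-type pairs forces $\rho$ to be convex cocompact. Either type being non-empty thus yields convex cocompactness of one of $j,\rho$, again contradicting the hypothesis. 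The delicate point in this last step is to verify that a single sub-family of boundary pairs still suffices to rule out a cusp, even though it need not tile $\HH^2$ on its own; this should follow from the fact that any horocycle sufficiently deep in a cusp is crossed by infinitely many $j(\Gamma)$-translates of such a geodesic, forcing two of them to share an ideal endpoint.
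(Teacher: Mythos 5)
Your treatment of the two single-chirality cases is exactly the paper's proof: by Theorem~\ref{thm:charact-crooked-domains}, a fundamental domain bounded by pairwise disjoint right $\AdSS$ crooked planes forces $j$ to be convex cocompact, and Remark~\ref{rem:left-right-crooked-fund-dom} transfers this to $\rho$ in the all-left case. The gap is in your mixed case. The key claim there --- that a nonempty $j(\Gamma)$-invariant family of geodesics with pairwise disjoint closures, finite modulo $j(\Gamma)$, already rules out cusps because ``any horocycle sufficiently deep in a cusp is crossed by infinitely many translates, forcing two of them to share an ideal endpoint'' --- is false. Take $j$ to be the holonomy of a once-punctured torus and let the family be the full preimage of a single simple closed geodesic: it is $j(\Gamma)$-invariant, its members have pairwise disjoint closures in $\HH^2\cup\partial_\infty\HH^2$, and no member enters a sufficiently deep cusp neighborhood at all. (Even when infinitely many disjoint translates do cross a horoball, the orbit of a single geodesic under the parabolic shows they need not share an ideal endpoint.) The cusp argument inside the proof of Theorem~\ref{thm:charact-crooked-domains} genuinely uses that the \emph{whole} collection $\A$ cuts $\HH^2$ into fundamental domains; a proper sub-family does not inherit this property, so your last step does not go through.

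The correct resolution is that the mixed case is vacuous: a left $\AdSS$ crooked plane and a right $\AdSS$ crooked plane always intersect, so a pairwise disjoint collection is automatically all-left or all-right, and your first two cases already complete the proof. To see this, note (as in Remark~\ref{rem:extremelines}) that the set of accumulation points in $\partial_\infty\AdS\simeq\PP^1(\RR)\times\PP^1(\RR)$ of a left crooked plane $g_2\ACP(\ell)g_1^{-1}$ is the pair of circles $\{\xi^+,\xi^-\}\times\PP^1(\RR)$, where $\xi^\pm$ are the endpoints of $g_2\cdot\ell$, whereas for a right crooked plane it is a pair of circles of the \emph{other} ruling, $\PP^1(\RR)\times\{\eta^+,\eta^-\}$. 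If the two were disjoint, the right crooked plane, being connected, would lie in one component $\mathsf{H}$ of the complement of the left one; but the closure of $\mathsf{H}$ meets $\partial_\infty\AdS$ in a set of the form $\overline{I}\times\PP^1(\RR)$ with $I\subsetneq\PP^1(\RR)$ an arc, which cannot contain the circle $\PP^1(\RR)\times\{\eta^+\}$. This is the fact the paper is implicitly using when it passes directly to the dichotomy ``right (resp.\ left)''; once it is in place, your argument (minus the mixed case) coincides with the paper's.
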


More precisely, if $\Gamma^{j,\rho}$ admits a fundamental domain bounded by right (\resp left) $\AdSS$ crooked planes, then $j$ (\resp $\rho$) needs to be convex cocompact and $\rho$ (\resp $j$) uniformly shorter than $j$ (\resp $\rho$) in the sense of~\eqref{eqn:propcritAdS}.

Thus for any injective and discrete representation $j\in\Hom(\Gamma,G_0)$ such that the group $j(\Gamma)$ has a parabolic element, and for any uniformly shorter representation $\rho\in\Hom(\Gamma,G_0)$ in the sense of \eqref{eqn:propcritAdS} (for instance the constant representation), the group $\Gamma^{j,\rho}$ acts properly discontinuously on $\AdS$ (see Section~\ref{subsec:prop-AdS}) but does not admit any crooked fundamental domain.

Note that the analogue of Proposition~\ref{prop:cusp-no-crook} in the Minkowski space~$\RR^{2,1}$ is \emph{not} true: for any injective and discrete representation $j\in\Hom(\Gamma,G_0)$ of the finitely generated free group~$\Gamma$ (even if $j(\Gamma)$ has parabolic elements), the set of $j$-cocycles $u : \Gamma\rightarrow\g$ such that $\Gamma^{j,u}:=\{ (j(\gamma),u(\gamma))\,|\,\gamma\in\Gamma\}\subset G_0\ltimes\g$ acts properly discontinuously on $\RR^{2,1}\simeq\g$ by affine transformations is nonempty, and the corresponding groups $\Gamma^{j,u}$ all admit fundamental domains in~$\RR^{2,1}$ bounded by pairwise disjoint crooked planes \cite{dgk-parab}.

\subsection{Examples with $\rho$ constant}

Let us now only consider proper actions of $\Gamma^{j,\rho}$ with $j$ convex cocompact.

\begin{proposition}\label{prop:nocrook1}
Let $\Gamma$ be a free group and $j\in\Hom(\Gamma,G_0)$ a convex cocompact representation such that the convex core $\Omega$ of the quotient surface $S = j(\Gamma)\backslash \HH^2$ has only one boundary component, of length $D>0$. 
Let $\rho\in\Hom(\Gamma,G_0)$ be the constant representation.
If the action of $\Gamma^{j,\rho}$ on $\AdS$ admits a fundamental domain bounded by pairwise disjoint $\AdSS$ crooked planes, then the action of $j(\Gamma)$ on~$\HH^2$ admits a fundamental domain $\mathscr{F}$ bounded by pairwise disjoint geodesic lines, each meeting the preimage $\widetilde{\Omega}\subset\HH^2$ of~$\Omega$ along segments of length $\leq D/2$.
\end{proposition}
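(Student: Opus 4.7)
Since $\rho$ is constant and $\Gamma$ is a nontrivial free group, $\rho$ is not convex cocompact, so by Theorem~\ref{thm:charact-crooked-domains} (combined with Remark~\ref{rem:left-right-crooked-fund-dom}) only \emph{right} $\AdSS$ crooked fundamental domains are possible. Applying Theorem~\ref{thm:charact-crooked-domains} yields a $j(\Gamma)$-invariant collection $\A$ of pairwise closure-disjoint geodesic lines tiling $\HH^2$, an isometric-on-each-edge map $f:\bigcup_{\alpha\in\A}\alpha\to\HH^2$, and a uniform shortening constant $K>0$. The key feature we exploit is that $\rho\equiv e$ makes $f$ purely $j(\Gamma)$-invariant: $f\circ j(\gamma)=f$ for every $\gamma\in\Gamma$. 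We take $\mathscr{F}$ to be any tile of the decomposition; it is bounded by pairwise disjoint geodesic lines from $\A$.

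Two reductions follow quickly from this invariance. \emph{(i)} No $\alpha\in\A$ is stabilized by any nontrivial element of $j(\Gamma)$: otherwise $f$ would factor through an isometric embedding of a circle into $\HH^2$, which is impossible. In particular, no $\alpha$ is a boundary line of $\widetilde\Omega$. \emph{(ii)} If $\alpha\cap\widetilde\Omega$ is nonempty, then no endpoint of $\alpha$ lies in the limit set $\Lambda$ of $j(\Gamma)$; consequently $\alpha\cap\widetilde\Omega$ is a bounded segment $[p,q]$ with endpoints on two distinct boundary lines of $\widetilde\Omega$. The proof of (ii) is a recurrence-plus-shortening argument: an endpoint $\xi$ of $\alpha$ in $\Lambda$ together with $\alpha\cap\widetilde\Omega\neq\emptyset$ would, by convexity, force a ray of $\alpha$ to lie in $\widetilde\Omega$ and project to an infinite-length simple curve in the compact core $\Omega$ (simple by~(i) and the disjointness of $\A$). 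Compactness would then furnish $s_n\neq t_n\in\RR$ with $|s_n-t_n|\to\infty$ and $\gamma_n\in\Gamma\smallsetminus\{e\}$ such that $d(\alpha(s_n),j(\gamma_n)\cdot\alpha(t_n))\to 0$, contradicting the estimate
\[|s_n-t_n|=d\bigl(f(\alpha(s_n)),f(j(\gamma_n)\alpha(t_n))\bigr)\leq d(\alpha(s_n),j(\gamma_n)\alpha(t_n))-K\]
obtained from invariance, isometry of $f|_\alpha$, and the shortening applied to $\alpha\neq j(\gamma_n)\alpha$.

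The crux is bounding the length $L$ of $[p,q]=\alpha\cap\widetilde\Omega$ by $D/2$. Let $\beta_p\neq\beta_q$ be the boundary lines of $\widetilde\Omega$ containing $p,q$ respectively. Since $\partial\Omega$ is connected, the boundary lines of $\widetilde\Omega$ form a single $j(\Gamma)$-orbit, so one can pick $\gamma_1\in\Gamma$ with $j(\gamma_1)\beta_p=\beta_q$; letting $\gamma_p\in\Gamma$ generate the stabilizer of $\beta_p$ (so $j(\gamma_p)$ translates $\beta_p$ by $D$), the orbit $\{j(\gamma_1\gamma_p^k)\cdot p\}_{k\in\ZZ}\subset\beta_q$ is equispaced with period $D$. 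Pigeonhole along $\beta_q$ then produces some $r:=j(\gamma_1\gamma_p^k)\cdot p$ with $d(q,r)\leq D/2$. By~(i), $j(\gamma_1\gamma_p^k)\alpha\neq\alpha$, and the shortening applied to $x=q,\ x'=p$, together with $f(j(\gamma_1\gamma_p^k)p)=f(p)$ and the isometry of $f|_\alpha$, yields
\[L=d(f(q),f(p))=d\bigl(f(q),f(j(\gamma_1\gamma_p^k)p)\bigr)\leq d(q,r)-K\leq D/2-K<D/2,\]
as required.

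The main obstacle is reduction~(ii): ruling out endpoints of $\alpha$ in the limit set requires the recurrence argument above, which itself relies on (i). Once (ii) is in place, the final $D/2$ bound is a clean pigeonhole argument along one boundary orbit, crucially using that $\partial\Omega$ has a single component (so that every boundary line of $\widetilde\Omega$ lies in the same $j(\Gamma)$-orbit).
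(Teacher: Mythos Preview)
Your proof is correct and follows essentially the same route as the paper's: both invoke Theorem~\ref{thm:charact-crooked-domains} to obtain the invariant map $f$ with $f\circ j(\gamma)=f$, and both conclude by a pigeonhole argument along the single boundary orbit of $\partial\widetilde\Omega$ to bound the length of $\alpha\cap\widetilde\Omega$ by $D/2$. The paper is terser: it simply lets $p_i,q_i$ be ``the two intersection points of $\ell_i$ with $\partial\widetilde\Omega$'' without further comment, tacitly using that arcs cutting $S$ into a disk must be proper arcs exiting through the funnel, hence have no endpoint in~$\Lambda$. You instead supply explicit reductions (i) and (ii); your recurrence argument for (ii) is correct but heavier than the topological observation just mentioned, and your final inequality is sharper ($\leq D/2-K$) than the stated $\leq D/2$ because you use the strict shortening rather than the $1$-Lipschitz bound the paper invokes.
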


\begin{proof}
Suppose the action of $\Gamma^{j,\rho}$ on $\AdS$ admits a fundamental domain bounded by disjoint $\AdSS$ crooked planes.
By Remark~\ref{rem:left-right-crooked-fund-dom} and Theorem~\ref{thm:charact-crooked-domains}, we can find pairwise disjoint geodesic lines $\ell_1,\dots, \ell_r,\ell'_1, \dots, \ell'_r$ in~$\HH^2$, bounding a fundamental domain $\mathscr{F}$ for the action of $j(\Gamma)$, and a $(j,\rho)$-equivariant, $1$-Lipschitz map $f : j(\Gamma)\cdot \bigcup_{i=1}^r \ell_i \rightarrow \HH^2$ whose restriction to any~$\ell_i$ is an isometry.
For $1\leq i\leq r$, let $p_i, q_i$ be the two intersection points of~$\ell_i$ with the boundary $\partial\widetilde{\Omega}$ of the convex core $\widetilde{\Omega}\subset\HH^2$ (see Figure~\ref{fig:proof-B3}).
Since $\rho$ is constant, $f(j(\Gamma)\cdot p_i)$ is a single point $P_i$ and $f(j(\Gamma)\cdot q_i)$ a single point $Q_i$. 
We have $d(P_i,Q_i)\leq D/2$ because some $j(\gamma)\cdot q_i$ is $D/2$-close to $p_i$ along $\partial\widetilde{\Omega}$ and $f$ is $1$-Lipschitz.
But $d(P_i, Q_i)=d(p_i,q_i)$ is also the length inside $\widetilde{\Omega}$ of an arc bounding~$\mathscr{F}$.
\end{proof}

\begin{figure}[ht!]
\centering
\labellist
\small\hair 2pt
\pinlabel $q_{2}$ [r] at 160 110
\pinlabel $p_{1}$ [u] at 115 145
\pinlabel $p_{2}'$ [d] at 21 64
\pinlabel $q_{1}'$ [d] at 59 27
\pinlabel $p_{1}'$ [d] at 115 27
\pinlabel $p_{2}$ [d] at 150 65
\pinlabel $q_{1}$ [u] at 55 145
\pinlabel $q_{2}'$ [l] at 12 110
\pinlabel $Q_2$ [l] at 317 140
\pinlabel $P_1$ [l] at 375 80
\pinlabel $P_2$ [l] at 340 25
\pinlabel $Q_1$ [r] at 285 80
\pinlabel $\overset{f}{\longrightarrow}$ [c] at 207 90
\endlabellist
\includegraphics[scale=0.7]{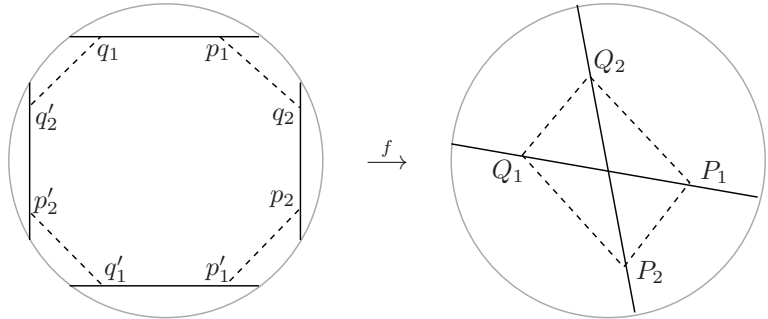}
\caption{An illustration of the proof of Proposition~\ref{prop:nocrook1} when $S$ is a one-holed torus. The boundary of the convex core $\widetilde{\Omega}\subset\HH^2$ is dashed. We set $p'_i:=j(\gamma_i)\cdot p_i$ and $q'_i:=j(\gamma_i)\cdot q_i$, as well as $P_i  := f(p_i) = f(p_i')$ and $Q_i := f(q_i) = f(q_i')$.}
\label{fig:proof-B3}
\end{figure} 

Proposition~\ref{prop:nocrook1} implies Theorem~\ref{thm:no-crooked-fd}.
Indeed, Randol's Collar Lemma \cite{ran79,bus78} states that in a hyperbolic surface~$S$, a geodesic boundary component of length~$D$ always has an embedded uniform neighborhood of radius $\psi(D)$ for some universal function $\psi : \RR_+\rightarrow\RR_+$ with $\lim_0 \psi=+\infty$.
Any arc from the boundary component to itself must have length $\geq\psi(D)$, hence $>D/2$ if $D$ is small enough.
Suppose $S$ is convex cocompact with only one boundary component, of such small length~$D$.
By Proposition~\ref{prop:nocrook1}, if $j\in\Hom(\Gamma,G_0)$ is the holonomy representation of~$S$ and $\rho\in\Hom(\Gamma,G_0)$ the constant homomorphism, then the group $\Gamma^{j,\rho}$ does not admit a fundamental domain in $\AdS$ bounded by pairwise disjoint $\AdSS$ crooked planes.

\subsection{An open set of examples}

In a related vein, here are examples where $j$ is convex cocompact and the image of $\rho$ contains an elliptic element.

\begin{proposition}\label{prop:nocrook2}
Let $\Gamma$ be a finitely generated free group.
For any $\theta\in (0,\pi]$ there exists $\delta>0$ with the following property: if $(j,\rho)\in\Hom(\Gamma,G_0)^2$ is a pair of representations with $j$ convex cocompact and $\rho$ uniformly shorter than~$j$ in the sense of \eqref{eqn:propcritAdS}, and if the group $\Gamma^{j,\rho}$ admits a fundamental domain bounded by pairwise disjoint $\AdSS$ crooked planes, then
$$\lambda(j(\gamma)) \geq \delta$$
for any $\gamma\in\Gamma$ such that $\rho(\gamma)$ is a rotation of angle $\geq \theta$ in absolute value.
\end{proposition}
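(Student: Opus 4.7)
The plan is to apply Theorem~\ref{thm:charact-crooked-domains} and exploit the resulting contracting map $f$ against the rotational rigidity of $\rho(\gamma)$ to extract a lower bound on $\lambda(j(\gamma))$.

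First, suppose $\Gamma^{j,\rho}$ admits a crooked fundamental domain. Since a convex cocompact representation has no elliptic elements, the presence of an elliptic $\rho(\gamma)$ rules out the swap case of Remark~\ref{rem:left-right-crooked-fund-dom}: the fundamental domain must consist of right $\AdSS$ crooked planes. Theorem~\ref{thm:charact-crooked-domains} then provides a $j(\Gamma)$-invariant collection $\A$ of pairwise disjoint geodesics of $\HH^2$, an isometry $g_\alpha\in G_0$ for each $\alpha\in\A$ assembling into a $(j,\rho)$-equivariant map $f:\bigcup_{\alpha\in\A}\alpha\to\HH^2$, and a constant $K>0$ for which \eqref{eqn:f-brings-closer} holds. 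Write $P\in\HH^2$ for the fixed point of $\rho(\gamma)$, $\theta_0\in[\theta,\pi]$ for its rotation angle, $A$ for the translation axis of $j(\gamma)$, and $\ell:=\lambda(j(\gamma))$. The line $A$ does not belong to $\A$, since otherwise $(j,\rho)$-equivariance would force $g_A\,j(\gamma)\,g_A^{-1}=\rho(\gamma)$, contradicting that $j(\gamma)$ is hyperbolic while $\rho(\gamma)$ is elliptic. Because $\A$ tiles $\HH^2$ into fundamental domains for $j(\Gamma)$, the axis $A$ crosses some $\alpha\in\A$ at a point $p$, and $j(\gamma)\cdot\alpha\neq\alpha$.

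Next, apply the contracting inequality to the pair $p\in\alpha$ and $j(\gamma)^n\cdot p\in j(\gamma)^n\cdot\alpha$: using the tile-counting refinement \eqref{eqn:n-x-x'}, the segment $[p,j(\gamma)^n\cdot p]\subset A$ meets the $n$ distinct geodesics $j(\gamma)\cdot\alpha,\ldots,j(\gamma)^n\cdot\alpha$ of $\A$, so
\[
d\bigl(f(p),\,\rho(\gamma)^n f(p)\bigr)=d\bigl(f(p),\,f(j(\gamma)^n\cdot p)\bigr)\leq n(\ell-K).
\]
Setting $P_\alpha:=f(p)$ and $r:=d(P_\alpha,P)$, hyperbolic trigonometry on the circle of radius $r$ about $P$ gives $\cosh d(P_\alpha,\rho(\gamma)^n P_\alpha)=1+2\sinh^2(r)\sin^2(n\theta_0/2)$. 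Already for $n=1$ this forces $\ell\geq K$ together with $\sinh(r)\sin(\theta/2)\leq\sinh(\ell/2)$, so $r\to 0$ as $\ell\to 0$.

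The remaining task --- the crux of the argument --- is to upgrade this conditional bound to a uniform $\delta=\delta(\Gamma,\theta)$ independent of the specific data $(j,\rho,\A,f,K)$. My plan is to combine the above finite-side estimate with the infinity-side disjointness condition of Theorem~\ref{thm:AdS-disj-right}(3) applied to the endpoints of the various $\alpha_n:=j(\gamma)^n\cdot\alpha$, whose $f$-images $f(\alpha_n)=\rho(\gamma)^n f(\alpha)$ all pass within distance $r$ of $P$ and are related pairwise by rotations of angle $n\theta_0$ about $P$. For $r$ very small, these rotated geodesics cluster near $P$ in a way that must violate the strict inequality at infinity, producing a universal lower bound on $r$ and thus on $\ell$ after plugging into $\sinh(r)\sin(\theta/2)\leq\sinh(\ell/2)$. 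The main obstacle I anticipate is controlling this argument uniformly over all choices of $\A$: the bound on the combinatorics of $\A$-orbits meeting $A$ should come from the bounded number $2\operatorname{rank}(\Gamma)$ of sides of a fundamental domain for $j(\Gamma)$, and combining this combinatorial control with the geometric clustering near $P$ should deliver the required $\delta$, in a manner reminiscent of the Collar Lemma argument used in the proof of Proposition~\ref{prop:nocrook1}.
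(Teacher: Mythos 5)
Your setup is sound and matches the paper's (Remark~\ref{rem:left-right-crooked-fund-dom} plus Theorem~\ref{thm:charact-crooked-domains} to produce $\A$, $f$, $K$), and the estimates you extract from the orbit of the single point $p$ --- namely $\lambda(j(\gamma))\geq K$ and $\sinh(r)\sin(\theta/2)\leq\sinh(\lambda(j(\gamma))/2)$ --- are correct. But the crux is missing, and the mechanism you propose for it does not work as stated. You argue that for small $r$ the rotated geodesics $f(\alpha_n)=\rho(\gamma)^n f(\alpha)$ ``cluster near $P$ in a way that must violate the strict inequality at infinity.'' For \emph{right} $\AdSS$ crooked planes, however, disjointness places no constraint on the configuration of the image geodesics: they may intersect, cluster, or even coincide (this is precisely Remark~\ref{rem:disj-CP-nodisj-geod}; conditions (2)--(3) of Theorem~\ref{thm:AdS-disj-right} constrain only the source lines and the amount of contraction). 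So no contradiction arises from the clustering itself, and there is no lower bound on $r$ to be had either --- $f(\alpha)$ may well pass through $P$. The worry about uniformity over the combinatorics of $\A$ is likewise a red herring: a single line $\alpha$ crossing the axis and its one translate $j(\gamma)\cdot\alpha$ suffice.

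What your argument discards is that $f|_\alpha$ is an \emph{isometry} applied to a whole segment of $\alpha$, not just to the crossing point $p$. The paper's proof runs as follows: the set $I=\{q\in\alpha \mid d(q,j(\gamma)\cdot q)\leq 1\}$ is a segment of length at least $2\,|\log\lambda(j(\gamma))|+O(1)$ (uniform $O(1)$), since $\alpha$ crosses the axis of $j(\gamma)$; for every $q\in I$, equivariance and \eqref{eqn:f-brings-closer} give $d(f(q),\rho(\gamma)\cdot f(q))=d(f(q),f(j(\gamma)\cdot q))\leq d(q,j(\gamma)\cdot q)\leq 1$, so $f(I)$ lies in the set $\{Q \mid d(Q,\rho(\gamma)\cdot Q)\leq 1\}$, which by your own trigonometric identity is a ball about $P$ of radius $r_\theta$ depending only on $\theta$. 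Since $f|_\alpha$ is an isometry, $f(I)$ has the same length as $I$, whence $2\,|\log\lambda(j(\gamma))|+O(1)\leq 2r_\theta$ and $\lambda(j(\gamma))\geq\delta(\theta)$. (Your route ``at infinity'' can in fact be completed, but not via clustering: applying condition (3) of Theorem~\ref{thm:AdS-disj-right} to the single endpoint pair $(\xi,\,j(\gamma)\cdot\xi)$ of $(\alpha,\,j(\gamma)\cdot\alpha)$ amounts to comparing the asymptotic growth $2t+2\log\sinh(\lambda(j(\gamma))/2)$ of $d(q,j(\gamma)\cdot q)$ along $\alpha$ with the growth $2t+2\log\sin(\theta_0/2)+O(r)$ of $d(f(q),\rho(\gamma)\cdot f(q))$, which yields $\sin(\theta/2)\lesssim\sinh(\lambda(j(\gamma))/2)$ --- but that computation still has to be supplied, and it is not the one you sketched.)
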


\begin{proof}
Let $(j,\rho)\in\Hom(\Gamma,G_0)^2$ be a pair of representations with $j$ convex cocompact and $\rho$ uniformly shorter than~$j$.
By Remark~\ref{rem:left-right-crooked-fund-dom} and Theorem~\ref{thm:charact-crooked-domains}, if $\Gamma^{j,\rho}$ admits a fundamental domain bounded by pairwise disjoint $\AdSS$ crooked planes, then we can find a collection $\A$ of pairwise disjoint geodesic lines dividing~$\HH^2$ into fundamental domains for the action of $j(\Gamma)$, and a $(j,\rho)$-equivariant, $1$-Lipschitz map $f : \bigcup_{\alpha\in\A} \alpha\to\HH^2$ whose restriction to any $\alpha\in\A$ is an isometry.
For any nontrivial $\gamma\in\Gamma$, we can find $\alpha\in\A$ intersecting the translation axis of $j(\gamma)$.
When $\lambda(j(\gamma))$ is small, the set of points $p\in\alpha$ such that $d(p,j(\gamma)\cdot p)\leq 1$ is a segment $I\subset\alpha$ of length at least $2\, \vert \log \lambda(j(\gamma))\vert +O(1)$.
On the other hand, if $\rho(\gamma)$ is a rotation of angle~$\theta$, then the set of points $q\in\HH^2$ such that $d(q,\rho(\gamma)\cdot q)\leq 1$ is a ball centered at the fixed point of $\rho(\gamma)$, whose radius depends only on~$\theta$.
The map $f$ must take $I$, isometrically, into this ball: this gives a lower bound on $\lambda(j(\gamma))$. 
\end{proof}

Proposition~\ref{prop:nocrook2} also implies Theorem~\ref{thm:no-crooked-fd}.
Indeed, it is easy to construct pairs $(j,\rho)$ with $j$ convex cocompact and $\rho$ uniformly shorter than~$j$ such that there exists an element $\gamma\in\Gamma$ for which $j(\gamma)$ is a very short translation and $\rho(\gamma)$ a fairly large rotation.
For instance, consider a free generating subset $\{ \gamma_1,\dots,\gamma_r\}$ of~$\Gamma$ and let $j$ send the $\gamma_i$, for $i\geq 1$, to hyperbolic elements playing ping pong in~$\HH^2$, with $\lambda(j(\gamma_1))$ very small.
Let $\rho$ send the~$\gamma_i$, for $i\geq 1$, to elliptic elements with a common fixed point in~$\HH^2$, with the rotation angle of $\rho(\gamma_1)$ fairly large.
Then $\rho$ is uniformly shorter than~$j$, hence the group $\Gamma^{j,\rho}$ acts properly discontinuously on $\AdS$ (see Section~\ref{subsec:prop-AdS}).
However, $\Gamma^{j,\rho}$ does not admit any fundamental domain in $\AdS$ bounded by pairwise disjoint $\AdSS$ crooked planes, by Proposition~\ref{prop:nocrook2}.

Note that the existence of an element~$\gamma$ for which $j(\gamma)$ is a short translation and $\rho(\gamma)$ a large rotation is stable under small deformations of $(j,\rho)$.
So is the property that $\rho$ be uniformly shorter than~$j$ (see \cite{kasPhD}, \cite[Prop.\,1.5]{gk13}).
Thus the absence of fundamental domains bounded by $\AdSS$ crooked planes is not a local accident.
By deforming the examples above, we can construct examples with $\rho(\Gamma)$ Zariski-dense in~$G_0$.

\subsection{A question}

To conclude, we ask the following question which, in light of \cite[Thm\,1.8]{dgk-strips} on the one hand, and of Theorem~\ref{thm:no-crooked-fd} on the other, seems natural and nontrivial.

\begin{question}\label{q:fund-domain}
Let $\Gamma$ be a finitely generated free group.
For which pairs $(j,\rho)\in\Hom(\Gamma,G_0)^2$ with $j$ injective and discrete, and with $(j,\rho)$ satisfying the uniform contraction property \eqref{eqn:propcritAdS}, does $\Gamma^{j,\rho}$ admit a fundamental domain in $\AdS$ bounded by (right) $\AdSS$ crooked planes?
\end{question}

Note that by using Theorem~\ref{thm:charact-crooked-domains} and analyzing the situation carefully as in the proof of Proposition~\ref{prop:nocrook1}, it is possible to construct convex cocompact representations~$j$ such that for $\rho$ constant (\ie $\rho\equiv 1$) the group $\Gamma^{j,\rho}$ \emph{does} admit a fundamental domain bounded by $\AdSS$ crooked planes.

\appendix
\section{Convergence of crooked fundamental domains}\label{app:convergence}

In \cite{dgk13} we showed that any Margulis spacetime is a rescaled limit, in projective space, of collapsing $\AdSS$ $3$-manifolds.
Here we explain how the ideas of~\cite{dgk-strips}  can be applied to make crooked fundamental domains for such collapsing $\AdSS$ manifolds converge in projective space to a fundamental domain for the Margulis spacetime. Let us assume in this appendix, as in~\cite{dgk-strips}, that the linear holonomy of the Margulis spacetime is convex cocompact.

\subsection{Embedding $\AdS$ and~$\RR^{2,1}$ into projective space}\label{subsec:embedP3}

Here is a quick review of how a transition from $\AdS$ to $\RR^{2,1}$ may be realized in real projective geometry. See~\cite[\S 7.1]{dgk13} for an expanded treatment.

The map
$$I: \begin{pmatrix} y_1+y_4 & y_2-y_3 \\ y_2+y_3 & -y_1+y_4 \end{pmatrix} \longmapsto [y_1:y_2:y_3:y_4]$$
defines an embedding of $\AdS = G_0 = \PSL_2(\RR)$ into $\PP^3(\RR)$ whose image is the open set $\{ [y]\in\PP^3(\RR) \,|\, y_1^2 + y_2^2 - y_3^2 - y_4^2 < 0\}$ (the interior of a projective quadric); it induces an injective group homomorphism $I_{\ast} :\ \mathrm{Isom}(\AdS)_0 = G_0\times G_0\ \hookrightarrow\ \PGL_4(\RR)$, and $I$ is $I_*$-equivariant: $I(A\cdot x) = I_*(A)\cdot I(x)$ for all $A\in\mathrm{Isom}(\AdS)_0$ and $x\in\AdS$.
As in Remark~\ref{rem:Einstein}.(1), we view the Minkowski space $\RR^{2,1}$ as the Lie algebra $\g=\mathfrak{psl}_2(\RR)$ of~$G$ endowed with half its Killing form, or equivalently as the set of Killing vector fields on~$\HH^2$ (see \cite[\S\,4.1]{dgk-strips}).
The map 
$$i: \begin{pmatrix} z_1 & z_2-z_3 \\ z_2+z_3 & -z_1 \end{pmatrix} \longmapsto [z_1:z_2:z_3:1]$$
defines an embedding of $\RR^{2,1} = \g = \mathfrak{psl}_2(\RR)$ into $\PP^3(\RR)$ whose image is the affine chart $\{ [z]\in\PP^3(\RR) \,|\, z_4 \neq 0\}$\,; it induces an injective group homomorphism $i_{\ast} :\ \mathrm{Isom}(\RR^{2,1})_0 = G_0\ltimes\g\ \hookrightarrow\ \PGL_4(\RR)$, and $i$ is $i_{\ast}$-equivariant: $i\big(B\cdot w) = i_{\ast}(B) \cdot i(w)$ for all $B\in\mathrm{Isom}(\RR^{2,1})_0$ and $w\in\RR^{2,1}$.
For $t>0$, consider the projective transformation
$$\small
r_t := \begin{pmatrix} t^{-1} & & & \\ & t^{-1} & & \\ & & t^{-1} & \\ & & & 1\end{pmatrix} \in \PGL_4(\RR).$$
Then $r_t\cdot I(\AdS)\subset r_{t'}\cdot I(\AdS)$ for $0<t'<t$ and
$$\bigcup_{t>0}\ r_t \cdot I(\AdS) \,=\, i(\RR^{2,1}) \cup \HH^2_{\infty}\,,$$
where $\HH^2_{\infty}:=\{ [y]\in\PP^3(\RR)\,|\, y_1^2 + y_2^2 - y_3^2 < 0 = y_4\}$ is a copy of the hyperbolic plane.
The limit as $t \to 0$ of the action of $r_t$ is differentiation:
\begin{enumerate}
  \item for any smooth path $t\mapsto g_t\in G=\AdS$ with $g_0=1$,
  $$r_t \cdot I(g_t) \,\xrightarrow[t\to 0]{}\, i\bigg(\frac{\D}{\D t}\Big|_{t=0}\, g_t\bigg) \in \PP^3(\RR) ;$$
  \item for any smooth path $t\mapsto (h_t,k_t)\in G\times G=\mathrm{Isom}(\AdS)_0$ with~$h_0=\nolinebreak k_0$,
 $$r_t\,I_{\ast}(h_t,k_t)\,r_t^{-1} \,\xrightarrow[t\to 0]{}\, i_{\ast}\bigg(h_0,\frac{\D}{\D t}\Big|_{t=0}\, h_t k_t^{-1}\bigg) \in \PGL_4(\RR).$$
\end{enumerate}

\subsection{Constructing crooked fundamental domains}

Let $\Gamma$ be a free group, $j\in\Hom(\Gamma,G_0)$ an injective and discrete representation, and $u :\nolinebreak \Gamma\to\nolinebreak\g$ a $j$-cocycle (\ie $u(\gamma_1\gamma_2)=u(\gamma_1)+\Ad (j(\gamma_1))u(\gamma_2)$ for all $\gamma_1,\gamma_2\in\Gamma$).
Suppose the group
$$\Gamma^{j,u} := \{ (j(\gamma),u(\gamma))~|~\gamma\in\Gamma\} $$
acts properly discontinuously on~$\RR^{2,1}$.
Then $M=\Gamma^{j,u}\backslash\RR^{2,1}$ is a Margulis spacetime; conversely, any Margulis spacetime is of this form by \cite{fg83}.
Suppose $j$ is convex cocompact.
By \cite{glm09,dgk13}, up to replacing $u$ with $-u$, the cocycle $u$ is \emph{uniformly contracting}, in the sense that
$$\sup_{\gamma\in\Gamma,\ \lambda(j(\gamma))>0 }\ \frac{\mathrm{d}}{\mathrm{d}t}\Big|_{t=0}\ \frac{\lambda(e^{tu(\gamma)}j(\gamma))}{\lambda(j(\gamma))} < 0,$$
where $\lambda : G\rightarrow\RR_+$ is the translation length function of \eqref{eqn:def-lambda}.
By \cite[Th.\,1.5]{dgk-strips}, the cocycle $u$ is a negative \emph{infinitesimal strip deformation} of~$j$, which implies \cite[\S\,7.4]{dgk-strips} the existence of
\begin{itemize}
  \item a $j(\Gamma)$-invariant collection $\A$ of geodesic lines of~$\HH^2$ with pairwise disjoint closures in $\HH^2\cup\partial_{\infty}\HH^2$, dividing $\HH^2$ into fundamental domains for $j(\Gamma)$,
  \item a $(j,u)$-equivariant vector field $v : \bigcup_{\alpha\in\A}\alpha\rightarrow T(\HH^2)$ whose restriction to any $\alpha\in\A$ is a Killing vector field $v_{\alpha}\in\g$ with the following property: for any adjacent $\alpha\neq\alpha'$ in~$\A$,
  \begin{equation}\label{eqn:Mink-SQ}
  v_{\alpha'} - v_{\alpha} \in -\mathcal{SQ}(\alpha')+\mathcal{SQ}(\alpha),
  \end{equation}
  where $\mathcal{SQ}(\alpha),\mathcal{SQ}(\alpha')$ are the respective \emph{stem quadrants} (see Remark\linebreak \ref{rem:Einstein}.(1)) of $\alpha,\alpha'$ for their transverse orientations away from each~other.
\end{itemize}
Note that \eqref{eqn:Mink-SQ} implies the existence of a constant $k>0$ such that for any $x\in\alpha$ and $x'\in\alpha'$,
\begin{equation}\label{eqn:v-brings-closer}
\frac{\mathrm{d}}{\mathrm{d}t}\Big|_{t=0} d\big(\exp_x(tv(x), \exp_{x'}(tv(x')\big) \,\leq\, - k
\end{equation}
(infinitesimal analogue of \eqref{eqn:f-brings-closer}).
Indeed, for any oriented geodesic line $\ell$ of~$\HH^2$ and any Killing field $X$ of~$\HH^2$, the signed projection of $X(p)$ to~$\ell$ does not depend on $p\in\ell$; we call it the \emph{component of $X$ along~$\ell$}.
With this terminology, the left-hand side of \eqref{eqn:v-brings-closer} is equal to the component of $v_{\alpha'}-v_{\alpha}$ along the geodesic line through $x$ and~$x'$, oriented from $x$ to~$x'$ \cite[Rem.\,3.4]{dgk13}.
This component is positive by \eqref{eqn:Mink-SQ}, and in fact bounded from below by some positive constant independent of $x,x'$ since $\alpha$ and~$\alpha'$ have disjoint closures in $\HH^2\cup\partial_{\infty}\HH^2$ (see the proof of \cite[Lem.\,7.3]{dgk-strips}).
Thus \eqref{eqn:v-brings-closer} holds for some $k>0$.
Moreover, $k$ can taken uniform for all pairs of adjacent $\alpha\neq\alpha'$ in~$\A$, since $\A$ is finite modulo $j(\Gamma)$.

For any $\alpha\in\A$, let $\CP^*(\alpha)\subset\RR^{2,1}\simeq\g$ be the right crooked plane centered at $0\in\g$ associated with~$\alpha$: this is the set of Killing vector fields of~$\HH^2$ with a nonattracting fixed point in the closure $\overline{\alpha}$ of~$\alpha$ in $\HH^2\cup\partial_{\infty}\HH^2$.
Note that the exponential of $\CP^*(\alpha)\subset\g$ is the right $\AdSS$ crooked plane $\ACP^*(\alpha)\in G_0$ of Section~\ref{subsec:ACP}; thus
$$r_t \cdot I(\ACP^*(\ell)) \,\xrightarrow[t\to 0]{}\, i(\CP^*(\ell))$$
in the topology of Hausdorff convergence on compact subsets (see Section~\ref{subsec:embedP3}).
By a result of Drumm \cite{dru92} (see \cite[\S\,7.4]{dgk-strips}), the inclusions \eqref{eqn:Mink-SQ} imply that the right Minkowski crooked planes $\CP^*(\alpha)+v_{\alpha}$, for $\alpha\in\A$, are pairwise disjoint and divide $\RR^{2,1}$ into fundamental domains for~$\Gamma^{j,u}$.
(Compare with Theorem~\ref{thm:charact-crooked-domains}.)

Now let $(j_t)_{t\geq 0}\subset\Hom(\Gamma,G_0)$ be any smooth path of representations of~$\Gamma$ with $j_0=j$ and $\frac{\mathrm{d}}{\mathrm{d}t}|_{t=0}\ j_t(\gamma)\,j(\gamma)^{-1}=u(\gamma)$ for all $\gamma\in\Gamma$. 
Then the action of $\Gamma^{j,j_t}$ on $\AdS$ converges to that of $\Gamma^{j,u}$ on $\RR^{2,1}$ in the sense of Section~\ref{subsec:embedP3}: for any $\gamma\in\Gamma$,
$$r_t\,I_{\ast}\big(j(\gamma),j_t(\gamma)\big)\,r_t^{-1} \ \xrightarrow[t\to 0]{}\ i_{\ast}\big(j(\gamma), u(\gamma)\big) \in \PGL_4(\RR).$$
\noindent We use Theorem~\ref{thm:charact-crooked-domains} to construct fundamental domains for the $\Gamma^{j,j_t}$ actions in $\AdS$.
Choose a system $\A'\subset\A$ of representatives of $\A$ modulo $j(\Gamma)$, and let $f_t : \bigcup_{\alpha\in\A} \alpha\rightarrow\HH^2$ be the $(j,j_t)$-equivariant map whose restriction to any $\alpha'\in\A'$ coincides with the orientation-preserving isometry $\exp(tv_{\alpha'})\in G_0$ of~$\HH^2$.
Then $f_t$ converges to the identity of $\bigcup_{\alpha\in\A} \alpha$ as $t\rightarrow 0$, uniformly on compact sets, and $\frac{\mathrm{d}}{\mathrm{d}t}\big|_{t=0}\ f_t=v$.
Moreover, from \eqref{eqn:v-brings-closer} we see that for $t>0$ small enough, we have
$$d\big(f_t(x),f_t(x')\big) \leq d(x,x') - \frac{k}{2}\,t$$
for all adjacent $\alpha\neq\alpha'$ in~$\A$, all $x\in\alpha$, and all $x'\in\alpha'$.
The inequality still holds for any (possibly nonadjacent) $\alpha\neq\alpha'$ in~$\A$ by Remark~\ref{rem:adjacent-edges}.
Hence by Theorem~\ref{thm:charact-crooked-domains}, for all small enough $t>0$, the group $\Gamma^{j,j_t}$ acts properly discontinuously on $\AdS$; if $g_{\alpha,t}\in G_0$ is the unique orientation-preserving isometry of~$\HH^2$ whose restriction to~$\alpha$ coincides with~$f_t$, then the right $\AdSS$ crooked planes $g_{\alpha,t}\ACP^*(\alpha)$, for $\alpha\in\A$, are pairwise disjoint and divide $\AdS$ into fundamental domains for~$\Gamma^{j,j_t}$.
By construction, using Section~\ref{subsec:embedP3}, the following holds.

\begin{proposition}
As $t\rightarrow 0$, the $I_*(\Gamma^{j,j_t})$-invariant collection of ``rescaled" $\AdSS$ crooked planes $\{r_t\cdot I(g_{\alpha,t}\ACP^*(\alpha))\}_{\alpha\in\A}$ converges in $\PP^3(\RR)$ to the $i_*(\Gamma^{j,u})$-invariant collection of Minkowski crooked planes $\{i(\CP^*(\alpha)+v_{\alpha})\}_{\alpha\in\A}$.
\end{proposition}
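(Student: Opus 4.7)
The plan is to reduce the convergence of the entire collection to two ingredients already essentially in place: the Hausdorff convergence $r_t\cdot I(\ACP^*(\alpha))\to i(\CP^*(\alpha))$ recalled just above the proposition, and the two rescaling limits~(1)--(2) of Section~\ref{subsec:embedP3}. Together with the $(j,j_t)$- and $(j,u)$-equivariance of the constructions, these reduce everything to a check on a finite set $\A'$ of $j(\Gamma)$-orbit representatives in~$\A$.

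First, for a fixed $\alpha'\in\A'$, I would use that $g_{\alpha',t}=\exp(tv_{\alpha'})$ by construction, so $g_{\alpha',t}$ is a smooth path in $G_0$ starting at~$e$ with velocity $v_{\alpha'}\in\g$. Since the crooked plane $g_{\alpha',t}\ACP^*(\alpha')$ is the image of $\ACP^*(\alpha')$ under the $\AdSS$-isometry $(1,g_{\alpha',t})\in(G\times G)_+$, formula~(2) of Section~\ref{subsec:embedP3} applied with $h_t\equiv 1$ and $k_t=g_{\alpha',t}$ yields
$$r_t\,I_\ast(1,g_{\alpha',t})\,r_t^{-1}\ \xrightarrow[t\to 0]{}\ i_\ast(1,v_{\alpha'})\in\PGL_4(\RR),$$
namely the rescaled isometry converging to the Minkowski translation by~$v_{\alpha'}$. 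Composing with the basic limit $r_t\cdot I(\ACP^*(\alpha'))\to i(\CP^*(\alpha'))$ yields $r_t\cdot I(g_{\alpha',t}\ACP^*(\alpha'))\to i(\CP^*(\alpha')+v_{\alpha'})$ for each $\alpha'\in\A'$.

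Next, for a general $\alpha=j(\gamma)\cdot\alpha'\in\A$, the $(j,j_t)$-equivariance of~$f_t$ forces $g_{\alpha,t}=j_t(\gamma)\,g_{\alpha',t}\,j(\gamma)^{-1}$, so $g_{\alpha,t}\ACP^*(\alpha) = (j(\gamma),j_t(\gamma))\cdot g_{\alpha',t}\ACP^*(\alpha')$. I would then apply formula~(2) a second time, with $h_t\equiv j(\gamma)$ and $k_t=j_t(\gamma)$, whose rescaled conjugate tends to $i_\ast(j(\gamma),u(\gamma))$, and invoke the $(j,u)$-equivariance of the pair $(\A,v)$ to conclude $r_t\cdot I(g_{\alpha,t}\ACP^*(\alpha))\to i(\CP^*(\alpha)+v_{\alpha})$ for every $\alpha\in\A$.

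The hard part, as I see it, is promoting this pointwise convergence of each crooked plane to genuine convergence of the two \emph{collections} on compact subsets of~$\PP^3(\RR)$: one must show that any compact $K\subset\PP^3(\RR)$ meets only finitely many rescaled $\AdSS$ crooked planes, uniformly in small $t>0$. I would establish this by combining the uniform shortening constant~$\tfrac{k}{2}\,t$ supplied by~\eqref{eqn:f-brings-closer} for the maps $f_t$ with its infinitesimal Minkowski counterpart~\eqref{eqn:v-brings-closer} governing the Margulis fundamental domain, imitating the ``going-to-infinity'' argument at the end of the proof of Theorem~\ref{thm:charact-crooked-domains}. These uniform escape estimates guarantee local finiteness, and the previous paragraphs then assemble into the asserted convergence of the full collection.
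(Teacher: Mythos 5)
Your proof is correct and follows essentially the same route as the paper, which disposes of this proposition with the single phrase ``by construction, using Section~\ref{subsec:embedP3}'' --- that is, precisely the combination you spell out of the basic Hausdorff limit $r_t\cdot I(\ACP^*(\alpha))\to i(\CP^*(\alpha))$, the rescaled-isometry limits (1)--(2), and the $(j,j_t)$- versus $(j,u)$-equivariance reducing everything to finitely many orbit representatives. One minor remark: as printed, formula~(2) reads $\frac{\D}{\D t}\big|_{t=0}h_tk_t^{-1}$, but the paper's own application to $(j(\gamma),j_t(\gamma))$ and your application to $(1,g_{\alpha',t})$ both require $\frac{\D}{\D t}\big|_{t=0}k_th_t^{-1}$; this is a sign typo in the source, and your usage matches the intended (correct) statement.
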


In particular, any choice of finitely many arcs $\alpha_1, \ldots, \alpha_m \in \A$ bounding a fundamental domain for the action of $j(\Gamma)$ on $\HH^2$ determines a path of collapsing fundamental domains in $\AdS$ for the actions $\Gamma^{j,j_t}$, bounded by the $\AdSS$ crooked planes associated to those arcs.
The corresponding fundamental domains in $r_t\cdot I(\AdS)$ for the actions $I_*(\Gamma^{j,j_t})$ converge in projective space to a fundamental domain, bounded by the crooked planes $i(\CP^*(\alpha_1)+v_{\alpha_1}),\ldots, i(\CP^*(\alpha_m)+v_{\alpha_m})$, for the action of $i_*(\Gamma^{j,u})$ on $i(\RR^{2,1})$.

\section{Disjointness of $\AdSS$ crooked planes via stem~quadrants}\label{app:SQ}

With the notation of Remark~\ref{rem:Einstein}.(1), Drumm--Goldman's original disjointness criterion for crooked planes in~$\RR^{2,1}$ \cite{dg99} states that for any geodesic lines $\ell,\ell'$ of~$\HH^2$ and any $v,v'\in\RR^{2,1}$, the left crooked planes $\CP(\ell)+v$ and $\CP(\ell')+v'$ are disjoint in~$\RR^{2,1}$ if and only if $\ell$ and~$\ell'$ do not meet in~$\HH^2$ and $v'-v\in\mathcal{SQ}(\ell')-\mathcal{SQ}(\ell)$ for the transverse orientations of $\ell$ and~$\ell'$ away from each other (see \cite[Th.\,6.2]{bcdg13}).
To push the analogy between $\RR^{2,1}$ and $\AdS$ further, we now establish an analogue of this criterion for $\AdSS$ crooked planes, in terms of the $\AdSS$ stem quadrants $\SQ(\ell),\SQ(\ell')$ of Section~\ref{subsec:ACP}.
This is~a~com\-plement to Theorems \ref{thm:AdS-disj-left} and~\ref{thm:AdS-disj-right}; it is not needed anywhere in the paper.

\begin{theorem}\label{thm:AdS-disj-SQ-left}
Let $\ell,\ell'$ be geodesic lines of~$\HH^2$ and let $g,g'\in G_0$.
Consider the following conditions:
\begin{enumerate}[(A)]
  \item \label{ap3} the left $\AdSS$ crooked planes $g\ACP(\ell)$ and $g'\ACP(\ell')$ are disjoint,
  \item \label{ap4} the geodesic lines $g\cdot\ell$ and $g'\cdot\ell'$ are disjoint, and $g^{-1}g'\in \SQ(\ell)^{-1} \SQ(\ell')$ for the transverse orientations of $\ell$ and~$\ell'$ taken by $g$ and~$g'$ to transverse orientations of $g\cdot\ell$ and $g'\cdot\ell'$ away from each other.
\end{enumerate}
Then $\eqref{ap3}\Rightarrow\eqref{ap4}$ always holds, and $\eqref{ap4}\Rightarrow\eqref{ap3}$ holds under the assumption that $\ell$ and~$\ell'$ are disjoint and transversely oriented away from each other.
\end{theorem}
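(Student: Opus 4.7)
Our strategy is to obtain both implications from Theorem~\ref{thm:AdS-disj-left}, viewing condition~\eqref{ap4} as a geometric repackaging of the distance-based criteria. Throughout, we exploit the freedom to ``translate'' the configuration by left multiplication, an orientation-preserving $\AdSS$ isometry.

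For $\eqref{ap4}\Rightarrow\eqref{ap3}$: the hypothesis $g^{-1}g'\in\SQ(\ell)^{-1}\SQ(\ell')$ amounts to writing $g=hs$ and $g'=hs'$ for some $h\in G_0$, $s\in\SQ(\ell)$, $s'\in\SQ(\ell')$. Left multiplication by $h^{-1}$, which is the orientation-preserving $\AdSS$ isometry $(e,h^{-1})$, reduces the disjointness of $g\ACP(\ell)$ and $g'\ACP(\ell')$ to that of $s\ACP(\ell)$ and $s'\ACP(\ell')$. Under the standing assumption that $\ell,\ell'$ are disjoint with transverse orientations away from each other, the hyperbolic translation $s\in\SQ(\ell)$ carries $\ell$ to its parallel translate $s\cdot\ell$ on the positive (away-from-$\ell'$) side, and $s'$ does the same for~$\ell'$; consequently $s\cdot\ell$ and $s'\cdot\ell'$ have even more separated closures in~$\overline{\HH^2}$. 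To conclude, we verify criterion~(3) of Theorem~\ref{thm:AdS-disj-left}: for every pair of ideal endpoints $\xi\in\partial\ell,\,\xi'\in\partial\ell'$, the horoball-distance expression $F_{s,s'}(\xi,\xi')=d(s\cdot B,s'\cdot B')-d(B,B')$ of Remark~\ref{rem:concrete-Fgg'} is strictly positive, because $s\cdot B$ and $s'\cdot B'$ lie in the regions strictly farther from~$\ell'$ and~$\ell$ respectively.

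For $\eqref{ap3}\Rightarrow\eqref{ap4}$: Theorem~\ref{thm:AdS-disj-left} gives that $g\cdot\ell$ and $g'\cdot\ell'$ have disjoint closures in~$\overline{\HH^2}$ and that $d(g\cdot x,g'\cdot x')-d(x,x')\geq K>0$ for all $(x,x')\in\ell\times\ell'$. Taking infima (with the horoball interpretation of Observation~\ref{obs:ext-diff-d} when needed) produces a strict gap $L_0\geq L+K$, where $L_0:=d(g\cdot\ell,g'\cdot\ell')$ and $L:=d(\ell,\ell')$. We must construct $h\in G_0$ with $h^{-1}g\in\SQ(\ell)$ and $h^{-1}g'\in\SQ(\ell')$; geometrically, this means $h\cdot\ell$ (resp.\ $h\cdot\ell'$) is a parallel translate of $g\cdot\ell$ (resp.\ $g'\cdot\ell'$) on its negative (toward the other) side, by some positive distance $t_1$ (resp.\ $t_2$). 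In the generic ultraparallel case, let $\gamma$ be the common perpendicular of $g\cdot\ell,\,g'\cdot\ell'$, with feet $q,q'$ at distance~$L_0$; choose $t_1,t_2>0$ with $t_1+t_2=L_0-L$, and let $\tilde\ell_1,\tilde\ell_2$ be the geodesics perpendicular to~$\gamma$ at distances $t_1$ from~$q$ and $t_2$ from~$q'$ (moving inward). Then $(\tilde\ell_1,\tilde\ell_2)$ is an ultraparallel pair at distance exactly~$L$ with transverse orientations ``away from each other'' (inherited from $g\cdot\ell,g'\cdot\ell'$); it is therefore congruent to $(\ell,\ell')$ with matching transverse orientations, and the unique $h\in G_0$ realizing this congruence satisfies, by tracing the definitions, $h^{-1}g\in\SQ(\ell)$ and $h^{-1}g'\in\SQ(\ell')$.

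The principal obstacle is handling the degenerate configurations of $(\ell,\ell')$ in $\eqref{ap3}\Rightarrow\eqref{ap4}$: when $\ell,\ell'$ meet in~$\HH^2$ (forcing $\tilde\ell_1,\tilde\ell_2$ to cross), share an ideal endpoint (where the ``distance'' $L$ must be interpreted via horoballs as in Remark~\ref{rem:concrete-Fgg'}), or carry transverse orientations of a pattern different from that enforced by the naive construction along~$\gamma$. In each case, one replaces parallel translates along the common perpendicular by parallel translates along \emph{arbitrary} perpendiculars to $g\cdot\ell$ and $g'\cdot\ell'$; the resulting parameter space of candidate pairs $(\tilde\ell_1,\tilde\ell_2)$ is large enough to produce a pair congruent to $(\ell,\ell')$ with the correct transverse orientations, thanks to the strict gap $L_0-L\geq K$ that Theorem~\ref{thm:AdS-disj-left} provides.
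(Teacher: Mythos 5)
There is a genuine gap in your proof of $\eqref{ap3}\Rightarrow\eqref{ap4}$, and it sits exactly at the point you dismiss with ``by tracing the definitions.'' Your $h$ is constructed so that $h\cdot\ell=\tilde\ell_1$ and $h\cdot\ell'=\tilde\ell_2$; this guarantees only that $h^{-1}g$ maps $\ell$ to the correct perpendicular translate of itself (at distance $t_1$ on its positive side), which determines $h^{-1}g$ up to right multiplication by the stabilizer of $\ell$ in $G_0$ --- a one-parameter family of shears along $\ell$. Since $\SQ(\ell)$ contains exactly \emph{one} element with a prescribed image line (the pure translation along the common perpendicular), the condition $h^{-1}g\in\SQ(\ell)$ is one extra scalar condition, and $h^{-1}g'\in\SQ(\ell')$ is another; your construction has only the single free parameter $t_1$ and never addresses either. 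In fact one checks that $h^{-1}g\in\SQ(\ell)$ forces $gh^{-1}$ to be the translation along $\gamma$ by $t_1$, and likewise $g'h^{-1}$ the translation along $\gamma$ by $-t_2$, so a \emph{necessary} condition for your $h$ to work is that $g'g^{-1}$ itself be a hyperbolic translation along the common perpendicular of $g\cdot\ell$ and $g'\cdot\ell'$. This fails generically: replacing $g$ by $gu$ with $u$ a short translation along $\ell$ preserves condition~\eqref{ap3} (by the uniform gap in condition~(2) of Theorem~\ref{thm:AdS-disj-left}) and leaves $g\cdot\ell$, $\gamma$, and hence your $h$ unchanged, but destroys $h^{-1}g\in\SQ(\ell)$. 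Eliminating these two residual shear parameters is the actual content of the implication; it is why the paper passes to the right-handed version and carries out the explicit matrix computations of Claims \ref{cla:Nis}--\ref{cla:almostSQ} (locating where the rays $hP'_{\pm}(t)$ and $P_{\pm}(t)^{-1}h$ meet the stem quadrants, then perturbing off $\partial\SQ$). Your closing paragraph on ``arbitrary perpendiculars'' correctly identifies the missing degrees of freedom but supplies no mechanism for choosing them.

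The other direction is essentially fine in outline and matches the paper, which likewise reduces to showing that $s\in\SQ(\ell)$, $s'\in\SQ(\ell')$ increase the relevant distances and defers the verification to hyperbolic trigonometry. But note that your one-line justification --- that $s\cdot B$ and $s'\cdot B'$ lie farther from $\ell'$ and $\ell$ --- does not by itself yield $d(s\cdot B,s'\cdot B')>d(B,B')$ (being farther from a separating line does not control the distance to a set beyond it, since $d(B,B')$ may greatly exceed $d(\ell,\ell')$); a genuine computation or an integrated infinitesimal argument as in Appendix~\ref{app:convergence} is needed there.
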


By Remark~\ref{rem:conj-ACP}, the left $\AdSS$ crooked plane $g\ACP(\ell)$ is the inverse of the right $\AdSS$ crooked plane $g^{-1}\,\ACP^*(g\cdot\ell)$, and similarly for $g'\ACP(\ell')$ and ${g'}^{-1}\ACP^*(g'\cdot\ell')$.
Moreover, $\SQ(g\cdot\ell)=g\SQ(\ell)g^{-1}$ and $\SQ(g'\cdot\ell')=g'\SQ(\ell'){g'}^{-1}$ for the transverse orientations of $g\cdot\ell$ and $g'\cdot\ell'$ induced by that of $\ell$ and~$\ell'$ via $g$ and~$g'$.
Therefore, Theorem~\ref{thm:AdS-disj-SQ-right} is equivalent to the following.

\begin{theorem}\label{thm:AdS-disj-SQ-right}
Let $\ell,\ell'$ be geodesic lines of~$\HH^2$ and let $g,g'\in G_0$.
Consider the following conditions:
\begin{enumerate}[(A)] \setcounter{enumi}{2}
  \item \label{ap1} the right $\AdSS$ crooked planes $g\ACP^*(\ell)$ and $g'\ACP^*(\ell')$ are disjoint,
  \item \label{ap2} the geodesic lines $\ell$ and~$\ell'$ are disjoint, and $g^{-1}g'\in \SQ(\ell) \SQ(\ell')^{-1}$ for the transverse orientations of $\ell$ and~$\ell'$ away from each other.
\end{enumerate}
Then $\eqref{ap1}\Rightarrow\eqref{ap2}$ always holds, and $\eqref{ap2}\Rightarrow\eqref{ap1}$ holds under the assumption that $g$ and~$g'$ take $\ell$ and~$\ell'$ to disjoint geodesics transversely oriented away from each other. 
\end{theorem}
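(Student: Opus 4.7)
The plan is to use Theorem~\ref{thm:AdS-disj-right} as the analytic disjointness criterion and to translate between its condition~(3) and the stem-quadrant condition~(D).

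First I would observe the reformulation: $g^{-1}g' \in \SQ(\ell)\SQ(\ell')^{-1}$ if and only if there exists $m \in G_0$ with $g^{-1}m \in \SQ(\ell)$ and $(g')^{-1}m \in \SQ(\ell')$. Writing $s := g^{-1}m$ and $s' := (g')^{-1}m$, so that $g = ms^{-1}$ and $g' = m(s')^{-1}$, the quantity in condition~(3) of Theorem~\ref{thm:AdS-disj-right} simplifies, by left $m$-invariance of the hyperbolic metric, to
\[
F_{g,g'}(\xi,\xi') \,=\, d\bigl(s^{-1}\cdot\xi,\, (s')^{-1}\cdot\xi'\bigr) - d(\xi,\xi').
\]
This identity is the bridge between (D) and condition~(3).

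For $(D)\Rightarrow(C)$, under the added hypothesis that $g\cdot\ell$ and $g'\cdot\ell'$ are disjoint and transversely oriented away from each other: given $s, s'$ as in (D), I would verify $F_{g,g'}(\xi,\xi')<0$ at each of the four endpoint pairs using the formula above. The key elementary fact is that, since $s\in\SQ(\ell)$ translates toward the positive (away-from-$\ell'$) side of $\ell$, its inverse $s^{-1}$ moves each endpoint of $\ell$ toward the negative side (which contains $\ell'$); symmetrically, $(s')^{-1}$ moves endpoints of $\ell'$ toward $\ell$. Hence horoballs at the corresponding endpoints approach each other strictly, yielding the strict inequality. By normalizing to a standard position (e.g.\ $\ell$ as the imaginary axis in the upper half-plane), this reduces to a direct horoball computation.

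For $(C)\Rightarrow(D)$: by Theorem~\ref{thm:AdS-disj-right}, $\overline{\ell}\cap\overline{\ell'}=\emptyset$ in $\overline{\HH^2}$ and $F_{g,g'}<0$ at all four endpoint pairs. I would first rule out that $g\cdot\ell$ and $g'\cdot\ell'$ share an endpoint at infinity---a shared endpoint would, via a parabolic or hyperbolic element fixing it chosen as in the proof of $(1)\Rightarrow(3)$ of Theorem~\ref{thm:AdS-disj-right}, produce a common point of $g\ACP^*(\ell)$ and $g'\ACP^*(\ell')$, contradicting disjointness. Hence $g\cdot\ell$ and $g'\cdot\ell'$ are ultraparallel. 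I would then construct $s\in\SQ(\ell), s'\in\SQ(\ell')$ with $g^{-1}g' = s(s')^{-1}$ by a direct hyperbolic construction: using the common perpendicular of $g\cdot\ell, g'\cdot\ell'$ together with the strict contraction $F_{g,g'}<0$, the axes and translation lengths of $s, s'$ are determined by equations whose solvability with the ``positive side'' signs is equivalent to $F_{g,g'}<0$ on all endpoint pairs.

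The main obstacle will be the explicit horoball computations in direction $(D)\Rightarrow(C)$---verifying the strict contraction in concrete coordinates---and the careful bookkeeping of transverse orientations pulled back from $g\cdot\ell, g'\cdot\ell'$ to $\ell, \ell'$ via $g, g'$ to match the ``away from each other'' condition. The explicit construction of $s, s'$ for $(C)\Rightarrow(D)$ is also delicate: one must check that the solution has translations in the correct (positive) direction for each stem quadrant, which is exactly what the strict contraction $F_{g,g'}<0$ guarantees.
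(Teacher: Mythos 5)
Your opening reformulation is correct and clean: writing $g^{-1}g'=s(s')^{-1}$ with $s=g^{-1}m\in\SQ(\ell)$, $s'=(g')^{-1}m\in\SQ(\ell')$ gives $F_{g,g'}(\xi,\xi')=d(s^{-1}\cdot\xi,(s')^{-1}\cdot\xi')-d(\xi,\xi')$. But both directions then have genuine gaps. For $\eqref{ap2}\Rightarrow\eqref{ap1}$, your ``key elementary fact'' --- $s^{-1}$ and $(s')^{-1}$ move the relevant endpoints toward each other, \emph{hence} the horoballs approach each other strictly --- is false as an unconditional implication. The counterexample given in the paper right after the statement is of exactly your form: take $s\in\SQ(\ell)$ a very long hyperbolic element whose repelling fixed point is an endpoint of~$\ell'$, and $s'\in\SQ(\ell')$ near the identity; then $s^{-1}$ does move the endpoints of $\ell$ to the negative side, yet $F_{g,g'}>0$ at some endpoint pair. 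The auxiliary hypothesis (that $g\cdot\ell$ and $g'\cdot\ell'$ are disjoint and transversely oriented away from each other) must enter the estimate quantitatively; you state it but never use it, and the trigonometric estimate that actually does the work is the content of the cited \cite[Prop.\,8.2]{dgk-strips}.

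For $\eqref{ap1}\Rightarrow\eqref{ap2}$ there are two problems. First, your preliminary step is wrong: disjointness of the \emph{right} crooked planes $g\ACP^*(\ell)$, $g'\ACP^*(\ell')$ forces $\overline{\ell}\cap\overline{\ell'}=\emptyset$, but it does \emph{not} prevent $g\cdot\ell$ and $g'\cdot\ell'$ from sharing an endpoint or even crossing (see Remark~\ref{rem:disj-CP-nodisj-geod}); the ``shared endpoint produces an intersection point'' argument in the proof of Theorem~\ref{thm:AdS-disj-right} concerns $\ell,\ell'$, not their images under $g,g'$. So the common perpendicular of $g\cdot\ell$ and $g'\cdot\ell'$, on which your construction rests, need not exist. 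Second, the assertion that the axes and lengths of $s,s'$ ``are determined by equations whose solvability is equivalent to $F_{g,g'}<0$'' restates the theorem rather than proving it, and is not even well posed: $\SQ(\ell)\times\SQ(\ell')$ is $4$-dimensional while $G_0$ is $3$-dimensional, so the decomposition is never unique and one must actually exhibit a solution. The paper does this by intersecting the projective lines $h\widehat{P'_{\pm}}$ and $\widehat{P_{\pm}}h$ with the projective spans of the stem quadrants, using the four inequalities \eqref{eqn:explicit-bring-closer} to check that the intersection points lie in $\SQ(\ell)$ (\resp $\SQ(\ell')^{-1}$), and then proving that at least one of the four parameters $t_{\pm},t'_{\pm}$ is a positive real via the identities $1/t'_+ + 1/t'_- = 2(bR-cR^{-1})/(bR+cR^{-1})$ and $1/t_+ + 1/t_- = 2(cR-bR^{-1})/(cR+bR^{-1})$ together with $R>1$, followed by a perturbation from $\partial\SQ$ into $\SQ$. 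That positivity argument is the heart of the proof and is entirely absent from your sketch.
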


Recall from Theorem~\ref{thm:AdS-disj-right} that the right $\AdSS$ crooked planes $g\ACP^*(\ell)$ and $g'\ACP^*(\ell')$ are disjoint (\ie \eqref{ap1} holds) if and only if
\begin{equation}
 \sup_{(x,x')\in\ell\times\ell'} d(g\cdot x, g'\cdot x') - d(x,x') < 0 .
\label{eqn:re-C}
 \end{equation}
Thus $\eqref{ap2}\Rightarrow\eqref{ap1}$ is not difficult to prove using hyperbolic trigonometry, under the assumption that $g$ and~$g'$ take $\ell$ and~$\ell'$ to disjoint geodesics transversely oriented away from each other: see \cite[Prop.\,8.2]{dgk-strips}.
Note that $\eqref{ap2}\Rightarrow\eqref{ap1}$ fails if we remove this assumption: indeed, to see that $g\cdot\ell\cap g'\cdot\ell'=\emptyset$ is required, 
fix $(x,x')\in\ell\times \ell'$ and
consider a long translation $g\in\nolinebreak\SQ(\ell)^{-1}$ (where $\ell$ is transversely oriented away from~$\ell'$) whose attracting fixed point is an endpoint of~$\ell'$. If $g$ is long enough, then for all $g'\in\SQ(\ell')^{-1}$ very close to $e\in G_0$ the lines $g\cdot \ell$ and $g'\cdot \ell'$ intersect, and $d(g\cdot x, g'\cdot x')-d(x,x')>0$, hence \eqref{eqn:re-C} fails.
Similarly, if the translation axis of $g\in\SQ(\ell)^{-1}$ intersects~$\ell'$, then \eqref{eqn:re-C} still fails for $g$ long enough and for $g'\in\SQ(\ell')^{-1}$ close to $e\in G_0$; the line $g\cdot \ell$ is then disjoint from $g'\cdot \ell'$ but its transverse orientation points towards $g'\cdot \ell'$.

\medskip

We shall now prove $\eqref{ap1}\Rightarrow\eqref{ap2}$ by direct computations.
Before engaging in those, it may be encouraging to note that if we are in the limit case of~\eqref{ap2} then we are also in the limit case of~\eqref{ap1}: namely, if $(g,g')\in \partial\SQ(\ell) \times \partial\SQ(\ell')$, then $g$ (\resp $g'$) is a parabolic element fixing an endpoint $\xi$ of~$\ell$ (\resp $\xi'$ of~$\ell'$), hence $d(g\cdot\xi, g'\cdot\xi') - d(\xi,\xi') = 0$ (see Observation~\ref{obs:ext-diff-d} and Remark~\ref{rem:concrete-Fgg'} for the meaning of the left-hand side).
Here, and in the whole proof, we denote by $\partial\SQ(\ell)\subset\AdS$ the boundary of $\SQ(\ell)$ in the timelike plane of $\AdS$ that contains it, and similarly for $\partial\SQ(\ell')$.

\subsection{Proof of $\eqref{ap1}\Rightarrow\eqref{ap2}$ in Theorem~\ref{thm:AdS-disj-SQ-right}}\label{subsec:ap1}

If \eqref{ap1} holds, then the geodesic lines $\ell$ and~$\ell'$ have disjoint closures in $\HH^2\cup\partial_{\infty}\HH^2$ by Theorem~\ref{thm:AdS-disj-right}.
Thus, working in the upper half-plane model of~$\HH^2$, we may assume that $\ell$ has endpoints $(\xi_+, \xi_-)=(-R^{-1}, R^{-1})$ and $\ell'$ has endpoints $(\xi'_+,\xi'_-)=(R, -R)$, for some $R>1$.
We endow $\ell$ and~$\ell'$ with transverse orientations away from each other, pointing to $0$ and~$\infty$ respectively.
We write elements of $G_0=\PSL_2(\RR)\subset\PGL_2(\RR)=G$ as square matrices, whose entries are defined up to a global multiplicative factor.


\begin{claim}\label{cla:Nis}
An element $h\in G_0$ belongs to $\SQ(\ell)$ (\resp $\SQ(\ell')^{-1}$) if and only if it is of the form
$$h=\begin{bmatrix} \alpha & -vR^{-1} \\ vR & \beta \end{bmatrix} \quad\quad \left( \text{resp. } h=\begin{bmatrix} \alpha & vR \\ -vR^{-1} & \beta \end{bmatrix} \right)$$
where $\alpha,\beta,v\in\RR$ satisfy
$$\left\{\begin{array}{l}
|\alpha| < |\beta|,\\
2|v| < |\alpha-\beta|.
\end{array} \right .$$
\end{claim}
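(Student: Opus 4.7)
The plan is to parametrize $\SQ(\ell)$ explicitly by combining a ``base'' hyperbolic element with translations along $\ell$, then read off the matrix entries and match the announced inequalities (and conversely recover $s,u$ from the announced form).

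First, set $h_0(u) := \bigl(\begin{smallmatrix} e^{-u/2} & 0 \\ 0 & e^{u/2}\end{smallmatrix}\bigr) \in G_0$ for $u > 0$: its axis is the imaginary axis, which is perpendicular to $\ell$ at the apex $iR^{-1}$, and its attracting fixed point is $0$, so it translates towards the positive side of $\ell$. Conjugating the diagonal subgroup of $\PSL_2(\RR)$ by an element sending $(0,\infty)$ to $(-R^{-1},R^{-1})$ yields the one-parameter subgroup $T_\ell \subset G_0$ of isometries fixing $\pm R^{-1}$ setwise, namely
$$t_s = \begin{bmatrix} \cosh(s/2) & R^{-1}\sinh(s/2) \\ R\sinh(s/2) & \cosh(s/2) \end{bmatrix}, \qquad s \in \RR.$$
Since $T_\ell$ acts transitively on $\ell$ by isometries preserving the transverse orientation and sending perpendicular geodesics to perpendicular geodesics, we obtain the parametrization $\SQ(\ell) = \{t_s\,h_0(u)\,t_s^{-1} : s \in \RR,\ u > 0\}$.

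A direct matrix multiplication with $c := \cosh(s/2)$ and $\sigma := \sinh(s/2)$ gives
$$t_s\,h_0(u)\,t_s^{-1} = \begin{bmatrix} c^2 e^{-u/2} - \sigma^2 e^{u/2} & 2R^{-1}\sigma c\sinh(u/2) \\ -2R\sigma c \sinh(u/2) & c^2 e^{u/2} - \sigma^2 e^{-u/2} \end{bmatrix},$$
which is of the claimed form with $v = -\sinh(s)\sinh(u/2)$. The three identities $\alpha+\beta = 2\cosh(u/2)$, $\beta-\alpha = 2\cosh(s)\sinh(u/2)$, and $(\alpha-\beta)^2 - 4v^2 = 4\sinh^2(u/2)$ then give $\alpha+\beta > 2 > 0$, $\beta > \alpha$ (hence $|\alpha| < |\beta|$), and $2|v| < |\alpha-\beta|$. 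Conversely, given $\alpha,\beta,v \in \RR$ satisfying the two inequalities, after rescaling the $\PSL_2$ representative one may assume $\alpha\beta + v^2 = 1$ and $\alpha + \beta > 0$; the identity $(\alpha-\beta)^2 - 4v^2 = (\alpha+\beta)^2 - 4$ (a consequence of $\alpha\beta + v^2 = 1$) gives $\alpha+\beta > 2$, so $u > 0$ is uniquely determined by $\cosh(u/2) = (\alpha+\beta)/2$, and then $s \in \RR$ is uniquely determined by $\cosh(s) = (\beta-\alpha)/(2\sinh(u/2)) \geq 1$ together with $\sinh(s) = -v/\sinh(u/2)$, the compatibility $\cosh^2 s - \sinh^2 s = 1$ again reducing to $\alpha\beta + v^2 = 1$.

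The case of $\SQ(\ell')^{-1}$ is handled by a parallel computation: $\ell'$ has apex $iR$ and is transversely oriented towards $\infty$, so one takes $h'_0(u) := \bigl(\begin{smallmatrix} e^{u/2} & 0 \\ 0 & e^{-u/2}\end{smallmatrix}\bigr)$ (attracting fixed point $\infty$) and $t'_s = \bigl[\begin{smallmatrix} \cosh(s/2) & R\sinh(s/2) \\ R^{-1}\sinh(s/2) & \cosh(s/2)\end{smallmatrix}\bigr]$ (fixing $\pm R$). The analogous product yields elements of $\SQ(\ell')$ of the form $\bigl[\begin{smallmatrix} \alpha' & v'R \\ -v'R^{-1} & \beta'\end{smallmatrix}\bigr]$ with $|\beta'| < |\alpha'|$ and $2|v'| < |\alpha'-\beta'|$; inverting (which, for determinant one, swaps the diagonal entries and negates the off-diagonal ones, preserving their ratio) converts this directly into the form claimed for $\SQ(\ell')^{-1}$, with the roles of $\alpha'$ and $\beta'$ swapped so that $|\alpha| < |\beta|$ as required. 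The only real obstacle throughout is the careful bookkeeping of sign conventions tied to transverse orientations and to attracting-versus-repelling fixed points; the underlying algebra is elementary hyperbolic trigonometry.
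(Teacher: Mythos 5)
Your proof is correct, but it takes a genuinely different route from the paper's. You parametrize the stem quadrant explicitly as $\SQ(\ell)=\{t_s\,h_0(u)\,t_s^{-1} : s\in\RR,\ u>0\}$ and then read everything off a single matrix product; the paper instead interprets each condition of the claim separately and geometrically: $2|v|<|\alpha-\beta|$ is the hyperbolicity condition $\mathrm{tr}(h)^2>4\det(h)$; the prescribed ratio $-R^{-2}$ of the off-diagonal entries says that the two real fixed points of $h$ have product $R^{-2}$, \ie that the axis of $h$ is orthogonal to $\ell$, which places $h$ in $\SQ(\ell)\cup\SQ(\ell)^{-1}$; and $|\alpha|<|\beta|$ singles out $\SQ(\ell)$ from $\SQ(\ell)^{-1}$ by a continuity-and-connectedness argument starting from the diagonal case $v=0$. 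Your approach costs a matrix multiplication but buys an explicit constructive bijection $(s,u)\mapsto h$ with clean inverse formulas ($\cosh(u/2)=(\alpha+\beta)/2$, etc.), and it treats both implications symmetrically; the paper's argument is shorter and isolates the geometric meaning of each inequality, at the price of the softer connectedness step. Two points you leave implicit are worth spelling out: in the converse you need $\beta-\alpha>0$ before setting $\cosh(s)=(\beta-\alpha)/(2\sinh(u/2))$, which does follow from the normalization $\alpha+\beta>2$ together with $|\alpha|<|\beta|$ (these force $\beta>0$, hence $\beta>|\alpha|\geq\alpha$); and one should note that both inequalities of the claim are invariant under the global sign ambiguity of $\PSL_2(\RR)$ representatives, so your normalizations are harmless. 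Your treatment of $\SQ(\ell')^{-1}$ by computing $\SQ(\ell')$ and then inverting is consistent with the paper's conventions (compare the parabolic boundary rays $P_\epsilon$, $P'_\epsilon$ used later in the appendix) and is correct.
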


\begin{proof}
It is enough to prove the claim for $\SQ(\ell)$: the characterization of $\SQ(\ell')^{-1}$ is similar, replacing $R$ with~$R^{-1}$.
The elements of $\SQ(\ell)$ are the hyperbolic elements of~$G_0$ whose translation axis is orthogonal to~$\ell$, oriented downwards (away from $\infty$). 
The condition $2|v| < |\alpha-\beta|$ expresses the fact that $\mathrm{tr}(h)^2>4\, \mathrm{det}(h)$, \ie $h$ is hyperbolic.
The condition that the ratio of the nondiagonal entries of~$h$ be $-R^{-2}$ expresses the fact that the fixed points of the M\"obius transformation $h$ have product $R^{-2}$, \ie are symmetric with respect to the line~$\ell$: this means $h\in\SQ(\ell)\cup \SQ(\ell)^{-1}$.
Finally, the condition $|\alpha|<|\beta|$ expresses the fact that the attracting fixed point of $h$ lies below~$\ell$, \ie $h\in \SQ(\ell)$.
Indeed, to see this, we argue by continuity and by connectedness of $\SQ(\ell)$.
Start from the case $v=0$, where $h$ is diagonal: in that case, clearly $h\in\SQ(\ell)$ if and only if  $|\alpha|<|\beta|$.
As we deform~$h$, reaching $\alpha=\beta$ would contradict $2|v|<|\alpha-\beta|$, and reaching $\alpha=-\beta$ would contradict the conjunction of $2|v|<|\alpha-\beta|$ with the determinant condition $\alpha\beta+v^2>0$.
Thus $|\alpha|<|\beta|$ always holds on $\SQ(\ell)$ (and $|\alpha|>|\beta|$ always holds on $\SQ(\ell)^{-1}$).
\end{proof}


Recall from Theorem~\ref{thm:AdS-disj-right} that Condition~\eqref{ap1} of Theorem~\ref{thm:AdS-disj-SQ-right} is equivalent to the fact that
\begin{equation}\label{eqn:bring-closer}
d(g\cdot \xi_{\varepsilon}, g'\cdot\xi'_{\varepsilon'}) - d(\xi_{\varepsilon},\xi'_{\varepsilon'}) < 0
\end{equation}
for all $(\varepsilon,\varepsilon')\in\{+,-\}^2$.
Let us write
$$h := g^{-1}g' = \begin{bmatrix} a & b \\ c & d \end{bmatrix} \in \PGL_2(\RR),$$
where $a,b,c,d\in\RR$ satisfy $ad-bc=1$.

\begin{claim}\label{cla:bring-closer}
For any $(\varepsilon,\varepsilon')\in\{+,-\}^2$, the inequality \eqref{eqn:bring-closer} is equivalent to
\begin{equation}\label{eqn:explicit-bring-closer}
| aR+\varepsilon' b + \varepsilon c + \varepsilon \varepsilon ' dR^{-1} | < | R+\varepsilon\varepsilon' R^{-1} |.
\end{equation}
\end{claim}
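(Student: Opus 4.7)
The plan is to derive a closed-form expression for $F_{g,g'}(\xi_\varepsilon, \xi'_{\varepsilon'})$ and then substitute the specific ideal points. Since $g \in G_0$ acts by isometry on $\HH^2$, we have $d(g\cdot\xi_\varepsilon, g'\cdot\xi'_{\varepsilon'}) = d(\xi_\varepsilon, h\cdot\xi'_{\varepsilon'})$ with $h = g^{-1}g'$, so \eqref{eqn:bring-closer} is equivalent to $F_{e,h}(\xi_\varepsilon, \xi'_{\varepsilon'}) < 0$, where the left-hand side is interpreted via distances between horoballs as in Remark~\ref{rem:concrete-Fgg'}.

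The key geometric input is the standard formula
$$d(B_\xi, B_{\xi'}) \,=\, \log\frac{(\xi-\xi')^2}{r r'}$$
for the hyperbolic distance between disjoint horoballs at distinct $\xi,\xi'\in\RR$ of Euclidean diameters $r,r'$ (easily checked by normalizing to $\xi=0,\xi'=\infty$ via a Möbius transformation and measuring along the imaginary axis). Combined with the standard fact that $h\in G_0$ with entries $a,b,c,d$ sends a horoball at $\xi'$ of Euclidean diameter $r'$ to a horoball at $h\xi'$ of diameter $r'/(c\xi'+d)^2$, the $r,r'$ cancel in the difference, and using the algebraic identity $(\xi - h\xi')(c\xi'+d) = (c\xi-a)\xi' + (d\xi - b)$ one obtains
$$F_{e,h}(\xi,\xi') \,=\, 2\log\left|\frac{(c\xi-a)\xi' + (d\xi-b)}{\xi-\xi'}\right|.$$
Thus \eqref{eqn:bring-closer} reduces to $|(c\xi_\varepsilon - a)\xi'_{\varepsilon'} + (d\xi_\varepsilon - b)| < |\xi_\varepsilon - \xi'_{\varepsilon'}|$.

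To conclude, I would substitute the explicit values, identifying $\varepsilon,\varepsilon' \in \{+,-\}$ with $\pm 1$ so that $\xi_\varepsilon = -\varepsilon R^{-1}$ and $\xi'_{\varepsilon'} = \varepsilon' R$. Direct expansion yields
$$(c\xi_\varepsilon - a)\xi'_{\varepsilon'} + (d\xi_\varepsilon - b) \,=\, -\bigl(c\,\varepsilon\varepsilon' + a\,\varepsilon' R + d\,\varepsilon R^{-1} + b\bigr),$$
and $\xi_\varepsilon - \xi'_{\varepsilon'} = -(\varepsilon R^{-1} + \varepsilon' R)$. Multiplying each expression by the sign $\varepsilon'$ (which does not change absolute values) produces exactly $|aR + \varepsilon' b + \varepsilon c + \varepsilon\varepsilon' dR^{-1}|$ and $|R + \varepsilon\varepsilon' R^{-1}|$, yielding \eqref{eqn:explicit-bring-closer}.

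The only nontrivial geometric input is the horoball distance formula, which is classical; the rest is careful bookkeeping of signs in the final substitution step, which is the only place where a slip is likely and where the actual computation is concentrated.
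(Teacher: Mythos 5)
Your proposal is correct, and the sign bookkeeping in the final substitution checks out: with $\xi_\varepsilon=-\varepsilon R^{-1}$ and $\xi'_{\varepsilon'}=\varepsilon' R$ one indeed gets $(c\xi_\varepsilon-a)\xi'_{\varepsilon'}+(d\xi_\varepsilon-b)=-(aR\varepsilon'+b+c\varepsilon\varepsilon'+dR^{-1}\varepsilon)$, and multiplying numerator and denominator by $\varepsilon'$ gives exactly \eqref{eqn:explicit-bring-closer}. Your route is genuinely different from the paper's. The paper parametrizes $\ell$ and $\ell'$ by unit-speed paths $p_t=D^{-1}L_t^{-1}\cdot\sqrt{-1}$ and $p'_{t'}=DL_{t'}\cdot\sqrt{-1}$, computes $2\cosh d(g\cdot p_t,g'\cdot p'_{t'})=\Vert L_tDhDL_{t'}\Vert^2$ explicitly, and extracts the ideal-point value as the limit $t\to\varepsilon\infty$, $t'\to\varepsilon'\infty$ of $\log$ of a ratio of $\cosh$'s, invoking the continuity statement of Observation~\ref{obs:ext-diff-d}. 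You instead work directly at the ideal points with the lambda-length formula $d(B_\xi,B_{\xi'})=\log\frac{(\xi-\xi')^2}{rr'}$ and the conformal scaling $r'\mapsto r'/(c\xi'+d)^2$, which is precisely the interpretation of $F_{g,g'}(\xi,\xi')$ given in Remark~\ref{rem:concrete-Fgg'}; the diameters cancel and the identity $(\xi-h\xi')(c\xi'+d)=(c\xi-a)\xi'+(d\xi-b)$ does the rest. Your version avoids the asymptotic analysis entirely and makes the independence of the horoball choice transparent, at the cost of needing (by a continuity or direct check) the degenerate cases $c\xi'+d=0$ (image horoball based at $\infty$) and $\xi=h\xi'$ (where $F=-\infty$, consistent with the left side of \eqref{eqn:explicit-bring-closer} vanishing); the paper's limit computation handles these uniformly but requires the closed-form expansion of $\Vert L_tDhDL_{t'}\Vert^2$. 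Both yield the same expression $2\log\bigl|\tfrac{aR+\varepsilon'b+\varepsilon c+\varepsilon\varepsilon'dR^{-1}}{R+\varepsilon\varepsilon'R^{-1}}\bigr|$ for $F_{g,g'}(\xi_\varepsilon,\xi'_{\varepsilon'})$.
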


\begin{proof}
For $t\in\RR$, we set
$$D := \begin{bmatrix} R^{1/2} & 0 \\ 0 & R^{-1/2} \end{bmatrix} \quad\text{ and }\quad L_t := \begin{bmatrix} \cosh (t/2) & \sinh (t/2) \\ \sinh(t/2) & \cosh(t/2) \end{bmatrix},$$
so that $p_t:= D^{-1}L_t^{-1}\cdot \sqrt{-1} \underset{t\rightarrow \varepsilon\infty}\longrightarrow  \xi_\varepsilon$ and $p'_{t'}:= DL_{t'}\cdot \sqrt{-1} \underset{t'\rightarrow \varepsilon'\infty}{\longrightarrow}  \xi'_{\varepsilon'}$ define unit-speed parameterizations of $\ell$ and $\ell'$ respectively.
Using the identity
$$2\cosh d(\sqrt{-1}, u\cdot \sqrt{-1}) = \Vert u \Vert^2 := \frac{\alpha^2+\beta^2+\gamma^2+\delta^2}{(\alpha\delta - \beta\gamma)^2}$$
for all $u=\begin{bmatrix} \alpha & \beta \\ \gamma & \delta \end{bmatrix} \in G_0$, we find that for any $t,t'\in\RR$,
\begin{eqnarray*}
& 2\cosh d(g\cdot p_t, g'\cdot p'_{t'}) = \Vert L_{t} D h D L_{t'} \Vert^2\\
= & (R^2 a^2 + b^2 + c^2 +R^{-2} d^2) \cosh t \cosh t' + 2 (R ab + R^{-1}cd) \cosh t \sinh t'\\ 
& \ +\ 2(R ac + R^{-1}bd) \sinh t \cosh t' + 2 (ad + bc) \sinh t \sinh t' .
\end{eqnarray*}
Similarly, replacing $h$ with the identity matrix, we find
$$ 2\cosh d(p_t, \cdot p'_{t'}) = (R^2 +R^{-2} ) \cosh t \cosh t' + 2 \sinh t \sinh t' .$$
Taking asymptotics as $t\rightarrow \varepsilon\infty$ and $t'\rightarrow \varepsilon'\infty$ and using the continuity in Observation~\ref{obs:ext-diff-d}, we finally obtain 
\begin{eqnarray*}
d(g\cdot \xi_\varepsilon, g'\cdot \xi'_{\varepsilon'}) - d(\xi_\varepsilon, \xi'_{\varepsilon'}) &=&
\lim_{\substack{t\to \varepsilon\infty \\ t' \to \varepsilon'\infty}} \log \frac{2\cosh d(g\cdot p_t,g'\cdot p'_{t'})}{2\cosh d(p_t, p'_{t'})} \\ &=& \log \frac{(aR+\varepsilon' b+\varepsilon c+\varepsilon \varepsilon' d R^{-1})^2}{(R + \varepsilon \varepsilon' R^{-1})^2},
\end{eqnarray*}
from which the claim immediately follows.
\end{proof}

Our goal now is to prove that if $h$ satisfies \eqref{eqn:explicit-bring-closer} for all $(\varepsilon,\varepsilon')\in\{+,-\}^2$, then $h\in\SQ(\ell)\SQ(\ell')^{-1}$.
We begin by proving something very close, namely that (in the generic case) $h$ belongs to $\partial\SQ(\ell)\,\SQ(\ell')^{-1}$ or $\SQ(\ell)\,\partial\SQ(\ell')^{-1}$.
We shall then pass to $\SQ(\ell)\SQ(\ell')^{-1}$ by a small perturbation argument.

For $\epsilon\in\{ +,-\}$, we define one-parameter subgroups $P_{\epsilon}=(P_{\epsilon}(t))_{t\in\RR}$ and $P'_{\epsilon}=(P'_{\epsilon}(t))_{t\in\RR}$ of~$G_0$ as follows:
$$P_{\epsilon}(t) := \begin{bmatrix} 1-t & -\epsilon R^{-1}t \\ \epsilon Rt & 1+t\end{bmatrix} \quad\mathrm{and}\quad P'_{\epsilon}(t) := \begin{bmatrix} 1+t & -\epsilon Rt \\ \epsilon R^{-1}t & 1-t\end{bmatrix}.$$
The group $P_{\epsilon}$ (\resp $P'_{\epsilon}$) consists of parabolic elements fixing $\xi_{\epsilon}$ (\resp $\xi'_{\epsilon}$) and we have
\begin{equation}
\label{eqn:parabound}
\left\{ \begin{array}{ccl}
\partial\SQ(\ell) & = & (P_+(t))_{t\geq 0} \cup (P_-(t))_{t\geq 0},\\
\partial\SQ(\ell') & = & (P'_+(t))_{t\geq 0} \cup (P'_-(t))_{t\geq 0}.
\end{array}\right.
\end{equation}

\begin{claim}\label{cla:almostSQ}
Suppose that $h$ satisfies \eqref{eqn:explicit-bring-closer} for all $(\varepsilon,\varepsilon')\in\{+,-\}^2$ and that it is not a diagonal matrix. 
Then one of the rays $(hP'_+(t))_{t>0}$ or $(hP'_-(t))_{t>0}$ intersects $\SQ(\ell)$ transversely, or one of the rays $(P_+(t)^{-1}h)_{t>0}$ or $(P_-(t)^{-1}h)_{t>0}$ intersects $\SQ(\ell')^{-1}$ transversely.
\end{claim}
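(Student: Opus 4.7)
The approach is by direct matrix computation. By Claim~\ref{cla:Nis}, $\SQ(\ell)$ lies in the $2$-plane $\mathcal{P}\subset G_0$ cut out from $2\times 2$ matrices by the linear equation $(\text{entry}_{2,1})+R^2\,(\text{entry}_{1,2})=0$, subject to the additional constraints $|\alpha|<|\beta|$ and $2|v|<|\alpha-\beta|$; likewise $\SQ(\ell')^{-1}$ lies in the plane $\mathcal{P}'$ defined by $(\text{entry}_{2,1})+R^{-2}\,(\text{entry}_{1,2})=0$. The matrices $P_\epsilon(t)$ and $P'_\epsilon(t)$ are affine in $t$, so each of the four rays is itself affine in $t$, and its intersection with $\mathcal{P}$ or $\mathcal{P}'$ is determined by a single linear equation in $t$.

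A direct calculation gives, whenever the denominator is nonzero, the unique crossing time of $hP'_\epsilon(t)$ with $\mathcal{P}$,
\[
t^*_\epsilon \,=\, \frac{R^2 b + c}{(R^2 b - c) + \epsilon(R^3 a - R^{-1} d)},
\]
and symmetrically of $P_\epsilon(t)^{-1}h$ with $\mathcal{P}'$,
\[
s^*_\epsilon \,=\, \frac{R^2 c + b}{(R^2 c - b) + \epsilon(R^3 a - R^{-1} d)}.
\]
At a finite crossing, transversality is automatic: the tangent vector to the ray pairs with the defining linear form of the plane to give a value proportional to the denominator, hence nonzero.

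The two numerators $R^2 b + c$ and $R^2 c + b$ cannot vanish simultaneously: otherwise $b=c=0$ (since $R^2\neq 1$) and $h$ would be diagonal, contrary to hypothesis. Among the two denominators $D_\pm := (R^2 b - c)\pm(R^3 a - R^{-1} d)$ of $t^*_\pm$, both can have sign opposite to $R^2 b + c$ only if $R^2 b + c$ and $R^2 b - c$ have opposite signs; a symmetric observation applies to $s^*_\pm$. A short case analysis on the signs of $R^2 b\pm c$ and $R^2 c\pm b$ then shows that at least one of the four crossing times is strictly positive.

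It remains to verify that the corresponding crossing point lies in the \emph{open} stem quadrant, i.e.\ that at the matrix obtained by substituting $t=t^*_\epsilon$ or $s=s^*_\epsilon$ the inequalities $|\alpha|<|\beta|$ and $2|v|<|\alpha-\beta|$ of Claim~\ref{cla:Nis} are satisfied. \emph{The main difficulty} lies precisely here: showing that the four bounds $|aR\pm b\pm c\pm dR^{-1}|<R\pm R^{-1}$ from \eqref{eqn:explicit-bring-closer} translate, after substitution and algebraic simplification, into exactly the required inequalities at the chosen crossing. A cleaner route is proof by contradiction: assuming that all four rays fail to meet their respective open targets transversely at positive time, one should be able to derive a violation of one of the inequalities \eqref{eqn:explicit-bring-closer}.
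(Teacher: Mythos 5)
Your setup is sound and coincides with the paper's: after clearing a factor of $R$, your crossing times agree with the paper's
$$t'_{\epsilon} = \frac{bR+cR^{-1}}{R(\epsilon aR+b)-R^{-1}(c+\epsilon dR^{-1})},
\qquad
t_{\epsilon} = \frac{cR+bR^{-1}}{R(\epsilon aR+c)-R^{-1}(b+\epsilon dR^{-1})},$$
and your sign analysis is essentially the paper's observation that $\tfrac{1}{t'_+}+\tfrac{1}{t'_-}=2\tfrac{R^2b-c}{R^2b+c}$ and $\tfrac{1}{t_+}+\tfrac{1}{t_-}=2\tfrac{R^2c-b}{R^2c+b}$ cannot both be nonpositive when $R>1$. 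However, the step you defer --- checking that the positive-time crossing lands in the \emph{open} stem quadrant rather than merely in its projective span --- is not a technical afterthought: it is the \emph{only} place in the entire argument where the hypothesis \eqref{eqn:explicit-bring-closer} enters. What you have actually established (every nondiagonal $h$ sends some ray across one of the two planes at positive time) is true for arbitrary $h\in G_0$ and therefore cannot by itself imply the claim. As written, the proof has a genuine gap at its central point.

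The gap closes by a direct computation, and the contradiction scheme you suggest is unnecessary. Writing the crossing point $hP'_{\epsilon}(t'_{\epsilon})$ in the form of Claim~\ref{cla:Nis} with entries $\alpha,\beta$ and off-diagonal parameter $v$, the relation $ad-bc=1$ gives
$$\alpha+\beta = R^2-R^{-2}+(aR+\epsilon b+\epsilon c+dR^{-1})(aR+\epsilon b-\epsilon c-dR^{-1}),$$
$$\beta-\alpha+2\epsilon v = (R-R^{-1})^2-(aR+\epsilon b-\epsilon c-dR^{-1})^2,
\qquad
\beta-\alpha-2\epsilon v = (R+R^{-1})^2-(aR+\epsilon b+\epsilon c+dR^{-1})^2.$$
Each right-hand side is positive by \eqref{eqn:explicit-bring-closer} (for the first, the two factors are bounded in absolute value by $R+R^{-1}$ and $R-R^{-1}$ respectively, so their product exceeds $-(R^2-R^{-2})$), and positivity of these three quantities is exactly equivalent to $|\alpha|<|\beta|$ and $2|v|<|\alpha-\beta|$. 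The symmetric computation ($b\leftrightarrow c$) handles $P_{\epsilon}(t_{\epsilon})^{-1}h$. The key structural point you should note is that this verification succeeds for \emph{every} finite crossing, for both signs $\epsilon$; it is therefore completely decoupled from the question of which crossing time happens to be positive, and no interaction between the two steps needs to be managed. One further loose end: when exactly one of your numerators $R^2b+c$, $R^2c+b$ vanishes, $h$ lies in the projective span of one of the stem quadrants and that pair of crossing times degenerates to $0$; you must then check that the other pair supplies a positive crossing time, which follows since $b/c=-R^{\pm2}$ then forces the relevant sign condition.
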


\begin{proof}
We see $\AdS=G_0=\PSL_2(\RR)$ as an open domain in the set $\PP(\mathrm{M}_2(\RR))$ of projectivized nonzero $2\times 2$ real matrices.
For any subset $X$ of $\PP(\mathrm{M}_2(\RR))$, we denote by $\widehat{X}\subset\PP(\mathrm{M}_2(\RR))$ its projective span:
\begin{itemize}
  \item $\widehat{\SQ(\ell)}=\widehat{\SQ(\ell)^{-1}}$ (\resp $\widehat{\SQ(\ell')}=\widehat{\SQ(\ell')^{-1}}$) is the projective plane characterized by a ratio of nondiagonal entries equal to $-R^2$ (\resp $-R^{-2}$), as in Claim~\ref{cla:Nis};
  \item $\widehat{P_+}$ is a projective line, equal to the union of the group $P_+$ and of a singleton $\{P_+(\infty)\}$ in the boundary of $\AdS$ in $\PP(\mathrm{M}_2(\RR))$; and similarly for $\widehat{P_-},\widehat{P'_+},\widehat{P'_-}$.
\end{itemize}

We first suppose that $h\notin\widehat{\SQ(\ell)}\cup\widehat{\SQ(\ell')}$, so that $bR+cR^{-1}\neq 0$ and $cR+bR^{-1}\neq 0$.
For $\epsilon\in\{ +,-\}^2$, the projective line $h\widehat{P'_{\epsilon}}$ then intersects the projective plane $\widehat{\SQ(\ell)}$ transversely at a unique point $hP'_{\epsilon}(t'_{\epsilon})$, and similarly $\widehat{P_{\epsilon}} h$ intersects $\widehat{\SQ(\ell')^{-1}}$ transversely at a unique point $P_{\epsilon}(t_{\epsilon})^{-1}h$, where $t_{\epsilon},t'_{\epsilon}\in\PP^1(\RR)\smallsetminus\{ 0\}$ can both be found by solving a degree-one equation: namely,
\begin{equation}\label{eqn:tis}
\begin{array}{ccl}
t'_{\epsilon} & = & \displaystyle{\frac{bR+cR^{-1}}{R(\epsilon aR+b)-R^{-1}(c+\epsilon dR^{-1})}},\\
t_{\epsilon} & = & \displaystyle{\frac{cR+bR^{-1}}{R(\epsilon aR+c)-R^{-1}(b+\epsilon dR^{-1})}}.
\end{array}
\end{equation}
Using $ad-bc=1$, we find
$$h P'_{\epsilon}(t'_{\epsilon}) =\! \begin{bmatrix} \alpha & -vR^{-1} \\ vR & \beta \end{bmatrix} \!\in \widehat{\SQ(\ell)},
\quad
P_{\epsilon}(t_{\epsilon})^{-1} h =\! \begin{bmatrix} \alpha' & v'R \\ -v'R^{-1} & \beta' \end{bmatrix} \!\in \widehat{\SQ(\ell')^{-1}},$$
where
$$\left \{ \begin{array}{l} 
\alpha = (\epsilon aR+b)^2 - R^{-2},\\ 
\beta = R^2 - (c+\epsilon dR^{-1})^2,\\ 
v = acR + 2\epsilon bc + bdR^{-1},
\end{array} \right .
\quad\quad
\left \{ \begin{array}{l} 
\alpha' = (\epsilon aR+c)^2 - R^{-2},\\ 
\beta' = R^2 - (b+\epsilon dR^{-1})^2,\\ 
v' = abR + 2\epsilon bc + cdR^{-1}.
\end{array} \right .$$
Note that if $t'_{\epsilon}\neq\infty$, then $hP'_{\epsilon}(t'_{\epsilon})\in\widehat{\SQ(\ell)}$ actually belongs to $\SQ(\ell)$: indeed, by Claim~\ref{cla:Nis}, this amounts to checking that $|\alpha|<|\beta|$ and $2|v|<|\alpha-\beta|$.
We use $ad-bc=1$ again to compute
$$\left\{ \begin{array}{l}
\alpha+\beta ~=~ R^2-R^{-2}+(aR+\epsilon b+\epsilon c+dR^{-1})(aR+\epsilon b-\epsilon c-dR^{-1}),\\
\beta-\alpha+2\epsilon v ~=~ (R-R^{-1})^2 - (aR+\epsilon b-\epsilon c-dR^{-1})^2,\\
\beta-\alpha-2\epsilon v ~=~ (R+R^{-1})^2 - (aR+\epsilon b+\epsilon c+dR^{-1})^2.
\end{array}\right.$$
These three real numbers are all positive by \eqref{eqn:explicit-bring-closer}, hence $hP'_{\epsilon}(t'_{\epsilon})\in\SQ(\ell)$.
Similarly, if $t_{\epsilon}\neq\infty$, then $P_{\epsilon}(t_{\epsilon})^{-1}h\in\widehat{\SQ(\ell')^{-1}}$ actually belongs to $\SQ(\ell')^{-1}$ (just exchange $b$ and~$c$).
Therefore, the claim reduces to proving that at least one of $t_+, t_-, t'_+, t'_-\in\PP^1(\RR)$ is a positive real.
Since they are all nonzero, they have well-defined inverses in~$\RR$.
The number
$$\frac{1}{t'_+} + \frac{1}{t'_-} = 2\frac{bR-cR^{-1}}{bR+cR^{-1}}$$
is positive if and only if $c=0$ or $|b/c|>R^{-2}$; the number
$$\frac{1}{t_+} + \frac{1}{t_-} = 2\frac{cR-bR^{-1}}{cR+bR^{-1}}$$
is positive if and only if $c\neq 0$ and $|b/c|<R^2$.
Since $R>1$, at least one of these two numbers must be positive, hence at least one of $t_+, t_-, t'_+, t'_-$ is a positive real.

Suppose now that $h\in\widehat{\SQ(\ell')}\smallsetminus\widehat{\SQ(\ell)}$.
For $\epsilon\in\{ +,-\}$, the projective line $h\widehat{P'_{\epsilon}}$ still intersects the projective plane $\widehat{\SQ(\ell)}$ transversely at a unique point $hP'_{\epsilon}(t'_{\epsilon})$, where $t'_{\epsilon}\in\PP^1(\RR)\smallsetminus\{ 0\}$ is given by \eqref{eqn:tis}.
Since $h\in\widehat{\SQ(\ell')}$, we have $b/c=-R^2$ by Claim~\ref{cla:Nis}; in particular, $|b/c|>\nolinebreak R^{-2}$ since $R>1$, and so one of $t'_+$ or~$t'_-$ is a positive real by the above calculation.
The case $h\in\widehat{\SQ(\ell)}\smallsetminus\widehat{\SQ(\ell')}$ is similar.
The case $h\in\widehat{\SQ(\ell)}\cap\widehat{\SQ(\ell')}$ is ruled out by assumption, since it corresponds to a diagonal matrix (see Claim~\ref{cla:Nis}).
\end{proof}
 
Suppose that Condition~\eqref{ap1} of Theorem~\ref{thm:AdS-disj-SQ-right} holds, \ie $h=g^{-1}g'\in G_0$ satisfies \eqref{eqn:explicit-bring-closer} for all $(\varepsilon,\varepsilon')\in\{+,-\}^2$.
If $h$ is a diagonal matrix, then it is by Claim~\ref{cla:bring-closer} a downward translation (of length at most $4\log R$) along the axis $(\infty, 0)$, perpendicular to $\ell$ and~$\ell'$.
We can break up $h$ into two translations of length twice less, which belong to $\SQ(\ell)$ and $\SQ(\ell')^{-1}$, yielding \eqref{ap2}.

If $h$ is not a diagonal matrix, then we use Claim~\ref{cla:almostSQ}: suppose for instance that the ray $(hP'_+(t))_{t>0}$ intersects $\SQ(\ell)$ transversely.
If we perturb slightly the parabolic semigroup $(P'_+(t))_{t>0}$ to a hyperbolic semigroup of $\SQ(\ell')$ translating along a line close to~$\xi'_+$, then the transversality property remains true, showing that $h\in\SQ(\ell)\SQ(\ell')^{-1}$.
Similarly, if the ray $(hP'_-(t))_{t>0}$ intersects $\SQ(\ell)$ transversely, or if one of the rays $(P_+(t)^{-1}h)_{t>0}$ or $(P_-(t)^{-1}h)_{t>0}$ intersects $\SQ(\ell')^{-1}$ transversely, then $h\in\SQ(\ell)\SQ(\ell')^{-1}$, \ie \eqref{ap2} holds.
This completes the proof of Theorem~\ref{thm:AdS-disj-SQ-right} (hence also of the equivalent Theorem~\ref{thm:AdS-disj-SQ-left}).

\subsection{A consequence of the proof}

We now work in the setting of Theorem~\ref{thm:AdS-disj-SQ-left}, \ie with left instead of right $\AdSS$ crooked planes.
Here is an interesting consequence of the proof of the previous paragraph.
Let $\ell,\ell'$ be two geodesic lines of~$\HH^2$ with disjoint closures in $\HH^2\cup\partial_{\infty}\HH^2$, transversely oriented away from each other.
Let
$$\mathscr{C} := \left\{h\in G_0~\Big|~\inf_{(x,x')\in\ell\times \ell'} d(x,h\cdot x')-d(x,x')>0\right\}$$
be the set of elements $h$ for which the left $\AdSS$ crooked planes $\ACP(\ell)$ and $h\ACP(\ell')$ are disjoint (Theorem~\ref{thm:AdS-disj-left}).
If $h\in\mathscr{C}$, then $\ell$ and $h\cdot\ell'$ have disjoint closures in $\HH^2\cup\partial_{\infty}\HH^2$.
Therefore $\mathscr{C}$ has four connected components, according to the configuration of transverse orientations on $\ell$ and $h\cdot\ell'$.
The involution $h\mapsto \sigma_\ell h \sigma_{\ell'}$ (where $\sigma_L$ denotes the orthogonal reflection in a line~$L$) switches these components in pairs.
Let $\mathscr{C}_0$ be the component of elements $h$ such that the transverse orientations on $\ell$ and $h\cdot\ell'$ induced by those of $\ell$ and~$\ell'$ via~$h$ are still away from each other.

\begin{proposition}\label{prop:param}
Let $\ell,\ell'$ be two geodesic lines of~$\HH^2$ with disjoint closures in $\HH^2\cup\partial_{\infty}\HH^2$, transversely oriented away from each other.
Then the product map restricts to homeomorphisms
$$\SQ(\ell)^{-1} \times \partial\SQ(\ell') \overset{\scriptscriptstyle\sim}{\longrightarrow} \mathscr{C}_0 \quad\text{and}\quad \partial\SQ(\ell)^{-1} \times \SQ(\ell') \overset{\scriptscriptstyle\sim}{\longrightarrow} \mathscr{C}_0.$$
\end{proposition}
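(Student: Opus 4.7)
The plan is to adapt, for the left crooked setting, the projective-line intersection technique developed in Section~\ref{subsec:ap1} for the proof of Theorem~\ref{thm:AdS-disj-SQ-right}, and to upgrade it from existence to bijectivity.

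By Theorem~\ref{thm:AdS-disj-SQ-left}, the set $\mathscr{C}_0$ coincides with $\SQ(\ell)^{-1}\SQ(\ell')$ (using condition~\eqref{ap4} with $g=e$ and $g'=h$, along with the transverse orientations of $\ell$ and $h\cdot\ell'$ being away from each other). Thus both product maps in the statement surject onto $\mathscr{C}_0$ provided we check their images lie there: for $p\in\SQ(\ell)^{-1}$ and $q\in\partial\SQ(\ell')$, approximate $q$ by $q_n\in\SQ(\ell')$ and set $p_n:=(pq)q_n^{-1}\to p$; by openness of $\SQ(\ell)^{-1}$, we have $p_n\in\SQ(\ell)^{-1}$ eventually, so $pq=p_nq_n\in\SQ(\ell)^{-1}\SQ(\ell')=\mathscr{C}_0$. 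An analogous argument handles the second product map. The dimensions also match ($2+1=3=\dim\mathscr{C}_0$), since by~\eqref{eqn:parabound} $\partial\SQ(\ell')$ is the $1$-dimensional union of the two parabolic rays $(P'_\pm(t))_{t\geq 0}$ joined at the identity.

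For bijectivity of the first map, given $h\in\mathscr{C}_0$, we construct a unique pair $(p,q)\in\SQ(\ell)^{-1}\times\partial\SQ(\ell')$ with $h=pq$. Normalize $\ell,\ell'$ as in Section~\ref{subsec:ap1}, with $R>1$. Following the method in the proof of Claim~\ref{cla:almostSQ}, for each $\epsilon\in\{+,-\}$ the projective line $\{hP'_\epsilon(t):t\in\PP^1(\RR)\}$ meets the projective plane $\widehat{\SQ(\ell)^{-1}}=\widehat{\SQ(\ell)}$ transversely at a single point $hP'_\epsilon(s_\epsilon)$, with $s_\epsilon\in\PP^1(\RR)\smallsetminus\{0\}$ given by the same rational formula as $t'_\epsilon$ in~\eqref{eqn:tis}. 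Set $\tau_\epsilon:=-s_\epsilon$, so that the candidate decomposition is $h=p\cdot P'_\epsilon(\tau_\epsilon)$ with $p:=hP'_\epsilon(-\tau_\epsilon)\in\widehat{\SQ(\ell)^{-1}}$. The four \emph{strict} inequalities defining $\mathscr{C}_0$ (the left-crooked analog of~\eqref{eqn:explicit-bring-closer}, reading $|aR+\epsilon'b+\epsilon c+\epsilon\epsilon'dR^{-1}|>|R+\epsilon\epsilon'R^{-1}|$ for all $(\epsilon,\epsilon')\in\{+,-\}^2$) then allow one to verify, via Claim~\ref{cla:Nis}, that (i)~$p$ lies in the open quadrant $\SQ(\ell)^{-1}$ (not merely in its projective span), and (ii)~exactly one of $\tau_+,\tau_-$ is a nonnegative real, uniquely determining $\epsilon$ and $q=P'_\epsilon(\tau_\epsilon)\in\partial\SQ(\ell')$. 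The degenerate diagonal case $h=\operatorname{diag}(\mu,\mu^{-1})$ produces $\tau_\epsilon=0$, giving $q=e$ (the apex) and $p=h$; a direct inspection using Claim~\ref{cla:Nis} confirms uniqueness. Continuity of the inverse follows from the rational dependence of $\tau_\epsilon$ on the entries of $h$.

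The second homeomorphism $\partial\SQ(\ell)^{-1}\times\SQ(\ell')\to\mathscr{C}_0$ is established by a symmetric argument using the \emph{left}-multiplication rays $P_\epsilon(t)^{-1}h$ (the other two families featured in Claim~\ref{cla:almostSQ}) in place of the right-multiplication rays. The main technical obstacle is point~(ii) above: the sum-of-reciprocals identity of Claim~\ref{cla:almostSQ} only shows that \emph{at least} one $\tau_\epsilon$ is nonnegative. To pin down signs individually, one complements it with the product-of-reciprocals identity
\[
\frac{1}{\tau_+\tau_-} \,=\, \frac{(bR-cR^{-1})^2-(aR^2-dR^{-2})^2}{(bR+cR^{-1})^2},
\]
and deduces from the four strict (left-crooked) inequalities on $(a,b,c,d,R)$ that the numerator has a definite sign, forcing $\tau_+$ and $\tau_-$ to have opposite signs and hence giving uniqueness of the valid $\epsilon$.
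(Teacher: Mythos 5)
Your overall strategy---intersecting the projective lines $h\widehat{P'_{\pm}}$ with the projective plane $\widehat{\SQ(\ell)}$ and analyzing the signs of the resulting parameters---is the same as the paper's, which routes the computation through Claim~\ref{cla:refine-almostSQ} applied to $h^{-1}$ and $h\cdot\ell'$. But there is a genuine gap at the decisive step. You claim that the four strict inequalities $|aR+\varepsilon'b+\varepsilon c+\varepsilon\varepsilon'dR^{-1}|>|R+\varepsilon\varepsilon'R^{-1}|$ force (i) the intersection point into the \emph{open} quadrant $\SQ(\ell)^{-1}$ and (ii) your numerator $(bR-cR^{-1})^2-(aR^2-dR^{-2})^2$ to have a definite sign. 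This cannot work: those four inequalities characterize membership in all of $\mathscr{C}$ (all four components), whereas the conclusion is specific to $\mathscr{C}_0$. Concretely, take $h:z\mapsto -b^2/z$ with $b\gg R$, i.e.\ $a=d=0$, $bc=-1$ (the order-two rotation about $ib$). All four inequalities hold, so $h\in\mathscr{C}$, but $h\notin\mathscr{C}_0$ (the induced transverse orientation on $h\cdot\ell'$ points toward~$\ell$); your numerator equals $(bR-cR^{-1})^2>0$, and indeed $\tau_+=\tau_-<0$, so no decomposition exists. Likewise, for $h$ near a strong contraction toward~$0$ (again in $\mathscr{C}\smallsetminus\mathscr{C}_0$) the intersection points land in $\SQ(\ell)$ rather than $\SQ(\ell)^{-1}$, so (i) also fails under your hypotheses alone.

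The missing ingredient is exactly what Claim~\ref{cla:refine-almostSQ} supplies: for $h\in\mathscr{C}_0$ one knows in addition that $h\cdot\ell'$ lies on the negative side of~$\ell$ and $h^{-1}\cdot\ell$ on the negative side of~$\ell'$, which in coordinates gives $|aR\pm b|>|c\pm dR^{-1}|$ and $|aR\pm c|>|b\pm dR^{-1}|$, whence the four numbers $aR\pm b\pm c\pm dR^{-1}$ of \eqref{eqn:samesign} all share a sign. From that same-sign property one deduces $|aR^2-dR^{-2}|>|bR-cR^{-1}|$ (so the numerator is negative and $\tau_+\tau_-<0$, giving your point (ii)) and also $\alpha+\beta>0$ in the notation of Claim~\ref{cla:almostSQ}, which is what places $p$ in $\SQ(\ell)^{-1}$ (your point (i)). Once these orientation inequalities are added, your computation closes up and is essentially the paper's argument. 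Two smaller issues: your ``surjectivity'' paragraph only shows the product maps land \emph{in} $\mathscr{C}_0$, not that they are onto---surjectivity is really the existence half of your later paragraph; and the identification $\mathscr{C}_0=\SQ(\ell)^{-1}\SQ(\ell')$ is not a one-line consequence of Theorem~\ref{thm:AdS-disj-SQ-left}, since condition~\eqref{ap4} includes the disjointness of the image geodesics and a specific choice of transverse orientations.
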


Recall from \eqref{eqn:parabound} that $\partial\SQ(\ell)\smallsetminus\{ e\}$ has two connected components, which we denote by $(P_+(t))_{t>0}$ and $(P_-(t))_{t>0}$; they are lightlike geodesic rays consisting of parabolic elements fixing an endpoint of~$\ell$.
Similarly, $\partial\SQ(\ell')\smallsetminus\nolinebreak\{ e\}$ has two connected components $(P'_+(t))_{t>0}$ and $(P'_-(t))_{t>0}$.
In order to prove Proposition~\ref{prop:param}, we first establish the following result, which refines Claim~\ref{cla:almostSQ} in the case that $\ell$ and $h\cdot\ell'$ have disjoint closures in $\HH^2\cup\partial_{\infty}\HH^2$ and are transversely oriented away from each other; it will be applied to inverses of elements of~$\mathscr{C}_0$.

\begin{claim}\label{cla:refine-almostSQ}
Let $\ell,\ell'$ be two geodesic lines of~$\HH^2$ and let $h\in G_0$ satisfy $\sup_{(x,x')\in\ell\times \ell'} d(x,h\cdot x')-d(x,x')<0$.
Suppose $\ell,\ell'$ have disjoint closures in $\HH^2\cup\partial_{\infty}\HH^2$ and are transversely oriented away from each other, and $\ell$ and $h\cdot\ell'$ also have disjoint closures in $\HH^2\cup\partial_{\infty}\HH^2$ and are transversely oriented away from each other, for the transverse orientation induced by~$h$.
Then either $h\in\SQ(\ell)$, or \emph{exactly one} of the open rays $(hP'_+(t))_{t>0}$ or $(hP'_-(t))_{t>0}$ intersects $\SQ(\ell)$ transversely.
Similarly, either $h\in\SQ(\ell')^{-1}$, or exactly one of the open rays $(P_+(t)^{-1}h)_{t>0}$ or $(P_-(t)^{-1}h)_{t>0}$ intersects $\SQ(\ell')^{-1}$ transversely.
\end{claim}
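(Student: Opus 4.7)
\emph{Plan.} The plan is to strengthen the sign analysis from the proof of Claim~\ref{cla:almostSQ} by using the extra orientation hypothesis, which was not used there. I will prove the first dichotomy (about the rays $(hP'_\pm(t))_{t>0}$ and $\SQ(\ell)$); the second follows by applying the first to $h^{-1}$ with the pair $(\ell',\ell)$ in place of $(\ell,\ell')$, since $P_\epsilon(t)^{-1}h\in\SQ(\ell')^{-1}$ is equivalent, upon taking inverses, to $h^{-1}P_\epsilon(t)\in\SQ(\ell')$, and the hypothesis on $h$ is equivalent, via $d(x,h\cdot x')=d(h^{-1}\cdot x,x')$, to the corresponding statement for $h^{-1}$ with $\ell$ and $\ell'$ exchanged.

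Working in the coordinates of Claim~\ref{cla:almostSQ}, the formula \eqref{eqn:tis} gives
\[
\frac{1}{t'_+}\cdot\frac{1}{t'_-}
\;=\;\frac{(bR-cR^{-1})^2-(aR^2-dR^{-2})^2}{(bR+cR^{-1})^2},
\]
so (provided $bR+cR^{-1}\ne 0$) the statement that \emph{exactly one} of $t'_+,t'_-$ lies in $(0,\infty)$ is equivalent to the strict inequality $|bR-cR^{-1}|<|aR^2-dR^{-2}|$. To interpret this geometrically, write $\eta_\epsilon:=h\cdot\xi'_\epsilon$ for the endpoints of $h\cdot\ell'$: since $\SQ(\ell)$ is precisely the set of hyperbolic translations $z\mapsto e^{-s}z$ with $s>0$, and since $P'_\epsilon(t)$ fixes $\xi'_\epsilon=\epsilon R$, the condition $hP'_\epsilon(t'_\epsilon)\in\SQ(\ell)$ (equivalent under \eqref{eqn:explicit-bring-closer} to $t'_\epsilon$ being finite, by Claim~\ref{cla:almostSQ}) translates into $\eta_+\in(0,R)$ for $\epsilon=+$ or $\eta_-\in(-R,0)$ for $\epsilon=-$. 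Thus the strict inequality above expresses that exactly one of these inclusions holds.

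Next I would extract from the orientation hypothesis the arc constraints on $\eta_\pm$: disjointness with transverse orientations pointing away forces both $|\eta_\pm|>R^{-1}$ (so that $\eta_\pm$ lie in the exterior arc cut off by the endpoints of~$\ell$), and places $h\cdot\infty=a/c$ on the arc cut off by $\eta_\pm$ that does not contain $\pm R^{-1}$. The key algebraic handle is the factorization
\[
(aR+\epsilon b)^2 - R^{-2}(cR+\epsilon d)^2
\;=\; (aR+\epsilon b-c-\epsilon dR^{-1})(aR+\epsilon b+c+\epsilon dR^{-1}),
\]
whose sign encodes $|\eta_\epsilon|\gtrless R^{-1}$ and whose two factors are exactly those appearing in \eqref{eqn:explicit-bring-closer}. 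Combined with the repackaging of \eqref{eqn:explicit-bring-closer} as $|aR+dR^{-1}|+|b+c|<R+R^{-1}$ and $|aR-dR^{-1}|+|b-c|<R-R^{-1}$, a case analysis on the signs of $aR\pm dR^{-1}$ and $b\pm c$ should yield the desired inequality. The degenerate case $bR+cR^{-1}=0$ places $h$ in the timelike plane $\widehat{\SQ(\ell)}\cap G_0=\SQ(\ell)\cup\partial\SQ(\ell)\cup\SQ(\ell)^{-1}$; a direct check in these coordinates shows that on $\overline{\SQ(\ell)^{-1}}\smallsetminus\{e\}$ one of the inequalities \eqref{eqn:explicit-bring-closer} reverses and on $\partial\SQ(\ell)$ all four become equalities, so the strict hypothesis forces $h\in\SQ(\ell)$, matching the claim's first alternative.

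The main obstacle I foresee is the case analysis in the previous paragraph: the two-fold orientation condition (arc placement of $\eta_\pm$ together with the position of $h\cdot\infty$) must be translated into precise sign conditions on the four linear forms $aR+\varepsilon' b+\varepsilon c+\varepsilon\varepsilon' dR^{-1}$, and then combined with the strict inequalities \eqref{eqn:explicit-bring-closer} to extract $|bR-cR^{-1}|<|aR^2-dR^{-2}|$. Limit configurations --- where one of $\eta_\pm$ equals $\infty$ or coincides with $\pm R$, or where $t'_\epsilon=\infty$ (the line $h\widehat{P'_\epsilon}$ tangent to $\widehat{\SQ(\ell)}$ on the boundary of $\AdS$) --- must be tracked to ensure the sharpness of the ``exactly one'' dichotomy.
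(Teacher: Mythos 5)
Your framework is the same as the paper's (same coordinates, the parameters $t'_\pm$ of \eqref{eqn:tis}, reduction of the second dichotomy to the first by inversion), and your reformulation of ``exactly one of $t'_+,t'_-$ positive'' as $|bR-cR^{-1}|<|aR^2-dR^{-2}|$ via the product $\tfrac{1}{t'_+}\cdot\tfrac{1}{t'_-}$ is algebraically equivalent to what must be shown. But the decisive step --- actually deriving that inequality from the hypotheses --- is precisely what you defer to ``a case analysis \dots should yield,'' so there is a genuine gap, and the route you sketch for closing it is off target: the inequalities \eqref{eqn:explicit-bring-closer} are not what drives this step. What does drive it are the four \emph{orientation} constraints alone: $h\cdot\ell'$ on the negative side of $\ell$ gives $|aR\pm b|>|c\pm dR^{-1}|$, and $h^{-1}\cdot\ell$ on the negative side of $\ell'$ gives $|aR\pm c|>|b\pm dR^{-1}|$; rewriting each as $(x+y)(x-y)>0$ shows that the four linear forms $aR\pm b\pm c\pm dR^{-1}$ of \eqref{eqn:samesign} have pairwise positive products over enough pairs to force all four to share one (nonzero) sign. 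Since $aR^2-dR^{-2}\pm(bR-cR^{-1})$ are positive combinations of these forms (coefficients $\tfrac{R\mp R^{-1}}{2}$ and $\tfrac{R\pm R^{-1}}{2}$, using $R>1$), your target inequality follows immediately, and the same computation rules out $t'_\epsilon=\infty$ --- which your product criterion by itself does not handle, since if one $t'_\epsilon=\infty$ the product of reciprocals is $0$ rather than negative, so your claimed ``equivalence'' with exactly one $t'_\epsilon\in(0,\infty)$ is not quite an equivalence.

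Several supporting assertions in your sketch are also incorrect, though not all are load-bearing. $\SQ(\ell)$ is not the one-parameter family $z\mapsto e^{-s}z$: by Claim~\ref{cla:Nis} it is the two-dimensional set of hyperbolic elements whose fixed points have product $R^{-2}$ and whose attracting fixed point lies on the positive side of~$\ell$, so the translation ``$hP'_+(t'_+)\in\SQ(\ell)\Leftrightarrow\eta_+\in(0,R)$'' is unjustified. Likewise $\widehat{\SQ(\ell)}\cap G_0$ is not $\SQ(\ell)\cup\partial\SQ(\ell)\cup\SQ(\ell)^{-1}$ (it also contains the elliptic interior of the stem), and on $\partial\SQ(\ell)$ only two of the four inequalities \eqref{eqn:explicit-bring-closer} degenerate to equalities, not all four; so your treatment of the case $bR+cR^{-1}=0$ does not go through as written. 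The correct handling is to run the computation of Claim~\ref{cla:almostSQ} at $t'_\epsilon=0$: the three positivity conditions obtained there from \eqref{eqn:explicit-bring-closer} are exactly the characterization of $\SQ(\ell)$ in Claim~\ref{cla:Nis}, which forces $h\in\SQ(\ell)$ in this degenerate case.
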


\begin{proof}[Proof of Claim~\ref{cla:refine-almostSQ}]
As in Section~\ref{subsec:ap1}, we work in the upper half-plane~mo\-del of~$\HH^2$, where we may assume that $\ell$ has endpoints $(\xi_+, \xi_-)=(-R^{-1}, R^{-1})$ and $\ell'$ has endpoints $(\xi'_+,\xi'_-)=(R, -R)$, for some $R>1$.
The transverse orientations of $\ell$ and~$\ell'$, away from each other, point to $0$ and~$\infty$ respectively.
Write
$$h = \begin{bmatrix} a & b \\ c & d \end{bmatrix} \in G_0\subset\PGL_2(\RR),$$
where $a,b,c,d\in\RR$ satisfy $ad-bc=1$.
Then $h$ satisfies \eqref{eqn:explicit-bring-closer} for all $(\varepsilon,\varepsilon')\in\{+,-\}^2$, by Theorem~\ref{thm:AdS-disj-right} and Claim~\ref{cla:bring-closer}.
Since $h\cdot\ell'$ lies on the negative side of~$\ell$, we have $|h\cdot R|>R^{-1}$ and $|h\cdot (-R)|>R^{-1}$, \ie
$$|aR+b| > |c+dR^{-1}| \text{ and } |aR-b| > |c-dR^{-1}|.$$
Since $h^{-1}\cdot\ell$ lies on the negative side of~$\ell'$, we have $|h^{-1}\cdot R^{-1}|<R$ and $|h^{-1}\cdot (-R^{-1})|<R$, \ie
$$|aR-c| > |b-dR^{-1}| \text{ and } |aR+c| > |b+dR^{-1}|.$$
Combining the four inequalities above and using the equivalence $|x|>\nolinebreak |y|\Leftrightarrow (x+y)(x-y)>0$, we see that the four real numbers
\begin{equation}\label{eqn:samesign}
\begin{array}{ll}
aR-b-c+dR^{-1}, & aR+b+c+dR^{-1},\\
aR+b-c-dR^{-1}, & aR-b+c-dR^{-1}
\end{array}
\end{equation}
all have the same sign.
In particular, since $R>1$, the denominators of $t'_+$ and~$t'_-$ in \eqref{eqn:tis} are nonzero and have opposite signs; since the numerators are the same, we see that either $t'_+>0>t'_-$, or $t'_+=0=t'_-$, or $t'_+<0<t'_-$.
The same holds for $t_+$ and~$t_-$.
Thus, by Claim~\ref{cla:almostSQ} and its proof, either $(hP'_+(t))_{t>0}$ intersects $\SQ(\ell)$ transversely (if $t'_+>0$), or $h\in\SQ(\ell)$ (if $t'_+=t'_-=0$), or $(hP'_-(t))_{t>0}$ intersects $\SQ(\ell)$ transversely (if $t'_->0$), and these three cases are mutually exclusive.
Similarly, either $(P_+(t)^{-1}h)_{t>0}$ intersects $\SQ(\ell')^{-1}$ transversely, or $h\in\SQ(\ell')^{-1}$, or $(P_-(t)^{-1}h)_{t>0}$ intersects $\SQ(\ell')^{-1}$ transversely, and these three cases mutually exclusive.
\end{proof}

\begin{proof}[Proof of Proposition~\ref{prop:param}]
Let $h\in\mathscr{C}_0$.
Applying Claim~\ref{cla:refine-almostSQ} to $(\ell,h\cdot\ell',h^{-1})$ instead of $(\ell,\ell',h)$, and using the fact that $\partial\SQ(h\cdot\ell')=h\,\partial\SQ(\ell')h^{-1}$ by compatibility of the transverse orientations, we see that either $h\in\SQ(\ell)^{-1}$ or there is exactly one connected component $(P'_t)_{t>0}$ of $\partial\SQ(\ell')\smallsetminus \{e\}$ such that $(h{P'_t}^{-1})_{t>0}$ intersects $\SQ(\ell)^{-1}$ transversely.
In particular, $h$ can be written \emph{uniquely} as the product of an element of $\SQ(\ell)^{-1}$ and of an element of $\partial\SQ(\ell')$.
By transversality, this decomposition depends continuously on~$h$, which shows that the product map $\SQ(\ell)^{-1}\times\partial\SQ(\ell')\rightarrow\mathscr{C}_0$ is a homeomorphism.
Similarly, the product map $\partial\SQ(\ell)^{-1}\times\SQ(\ell')\rightarrow\mathscr{C}_0$ is a homeomorphism.
\end{proof}

\vspace{0.5cm}

\end{document}